\newtheorem{theorem}{Theorem}[section]
\newtheorem{lemma}[theorem]{Lemma}
\newtheorem{corollary}[theorem]{Corollary}
\newtheorem{prop}[theorem]{Proposition}
\newtheorem{ap}[theorem]{A Priori Estimate}
\newtheorem{assumption}[theorem]{Assumption}
\newtheorem{notation}[theorem]{Notation}
\theoremstyle{definition}
\newtheorem{definition}[theorem]{Definition}
\theoremstyle{remark}
\newtheorem{remark}[theorem]{Remark}
\numberwithin{equation}{section}
\newcommand\nc{\newcommand}
\nc{\on}{\operatorname}
\nc{\R}{\mathbb R}
\nc{\C}{\mathbb C}
\nc{\Q}{\mathbb Q}
\nc{\Z}{\mathbb Z}
\nc{\N}{\mathbb N}
\nc{\F}{\mathbb F}
\nc{\Hom}{\on{Hom}}
\nc{\wt}{\widetilde}
\nc{\kernel}{\text{ker}}
\nc{\image}{\text{Im}}
\nc{\sls}{\subsetneq ... \subsetneq}
\nc{\ssn}{\subsetneq}
\nc{\bull}{$\bullet \, \,$}
\nc{\ol}{\overline}
\nc{\short}[3]{0 \longrightarrow #1 \longrightarrow #2 \longrightarrow #3 \longrightarrow 0}
\nc{\pd}[2]{\frac{\partial #1}{\partial #2}}
\nc{\one}{\mathbf{1}}
\nc{\rnc}{\renewcommand}
\nc{\e}{\varepsilon}
\nc{\DMO}{\DeclareMathOperator}
\nc{\dd}{\emph{d}}
\nc{\grad}{\nabla}
\rnc{\leq}{\leqslant}
\rnc{\geq}{\geqslant}
\rnc{\int}{\varint}
\rnc{\d}{\text{d}}
\DeclareMathOperator{\E}{\mathbb{E}}
\DMO{\D}{\text{D}}
\DMO{\pv}{p.v.}
\newenvironment{nouppercase}{%
  \renewcommand{\uppercasenonmath}[1]{}}{}
\begin{document}


\title{\large Local Correlation and Gap Statistics under Dyson Brownian Motion for Covariance Matrices}

\author{\large Kevin Yang$^{\dagger}$} \thanks{$\dagger$ Stanford University, Department of Mathematics. Email: kyang95@stanford.edu.}

\begin{nouppercase}
\maketitle
\end{nouppercase}
\begin{center}
\today
\end{center}

\begin{abstract}
In this paper, we study the convergence of local bulk statistics for linearized covariance matrices under Dyson's Brownian motion. We consider deterministic initial data $V$ approximate the Dyson Brownian motion for linearized covariance matrices by the Wigner flow. Using universality results for the Wigner flow, we deduce universality for the linearized covariance matrices. We deduce  bulk universality of averaged bulk correlation functions for both biregular bipartite graphs and honest covariance matrices. We also deduce a weak level repulsion estimate for the Dyson Brownian motion of linearized covariance matrices.
\end{abstract}

\tableofcontents

\newpage
\section{Introduction}
The Wigner-Dyson-Gaudin-Mehta conjecture asserts that the local eigenvalue statistics, such as eigenvalue gaps and other statistics on the gap scale, are universal, depending only on the symmetry class of the matrix ensemble (real symmetric, complex Hermitian, quaternion). Towards this conjecture, Freeman Dyson introduced the Dyson Brownian motion for eigenvalues of a mean-field real Wigner matrix in \cite{D} interpolating between the random matrix ensemble of interest and the GOE, whose eigenvalues exhibit the following density (see \cite{GM}):
\begin{align}
p_G(\lambda_1, \ldots, \lambda_N) \ = \ \frac{1}{Z_G} \prod_{i < j} \ |\lambda_i - \lambda_j|^{\beta} e^{-\frac{N}{4} \sum_{i = 1}^N \ \lambda_i^2}.
\end{align}
Dyson \cite{D2}, Gaudin and Mehta \cite{GM} were able to compute the distribution of gaps for the GOE. Moreover, in the papers \cite{BHKY}, \cite{ESY}, \cite{ESYY}, \cite{EYY}, \cite{HLY}, \cite{LSY}, and \cite{LY}, the Dyson Brownian motion was studied in detail, culminating in proofs of the WDGM conjecture for a variety of random Wigner matrix ensembles. The proof of the WDGM conjecture in these papers followed the robust three-step strategy:
\begin{itemize}
\item Step 1: Derive a local law, showing convergence of the Stieltjes transform of the random matrix ensemble at microscopic scales.
\item Step 2: Show short-time stability for times $t \leq N^{-1 + \e}$ of the eigenvalue statistics under Dyson's Brownian motion (DBM), which stochastically interpolate between the random matrix ensemble of interest and the limiting universal ensemble (e.g. GOE).
\item Step 3: Show a short-time to convergence under the interpolating DBM, i.e. show that after time $t \geq N^{-1 + \delta}$ the eigenvalue statistics of the evolved matrix ensemble agree with those of the limiting ensemble.
\end{itemize}
The third step of this three-step strategy has attracted much attention from the perspective of statistical mechanics due to Dyson's work \cite{D} and \cite{D2}. For Wigner matrices, the papers listed above address the third step for Wigner matrices in a variety of ways, from convexity methods to ideas from nonlinear parabolic equations. In the papers \cite{LSY} and \cite{LY}, the third-step was completed for Wigner matrices for a wide class of initial data, expanding on previous work which required strong independence and moment assumptions. This result from \cite{LSY} and \cite{LY} was an important ingredient in proving bulk universality for sparse and correlated Wigner matrices (see \cite{BHKY} and \cite{HLY}), e.g. those coming from Erdos-Renyi graphs and $d$-regular graphs. 

For covariance matrices, however, the third step has been studied in relatively little detail and generality. The papers \cite{ESY} and \cite{ESYY} complete the third-step of the robust strategy outlined above with strong probabilistic assumptions on the initial data matrix ensemble. These assumptions, for example, fail for matrix ensembles of sparse covariance matrices and correlated covariance matrices. These "pathological" ensembles include sparse data in statistics and biregular bipartite graphs in combinatorics/statistical physics. 

The contribution of this paper is to relax the conditions on the initial data to prove a short time to equilibrium under the Dyson Brownian motion for covariance matrices. The approach we take differs from those of \cite{ESY} and \cite{ESYY}. In particular, given a covariance matrix of the form $X_{\ast} = H^{\ast} H$, we instead look at the augmented linearization given by
\begin{align}
X \ := \ \begin{pmatrix} 0 & H \\ H^{\ast} & 0 \end{pmatrix}.
\end{align}
This perspective in studying the covariance matrix DBM seems to be original to this paper, as far as the author is aware.

One outstanding issue preventing the ideas used in \cite{LY} to prove gap universality and bulk averaged correlation universality for covariance matrices or their linearizations is the lack of a level-repulsion estimate for these two ensembles. The level repulsion estimate for the Wigner flow was a crucial ingredient in \cite{LY}. However, the level-repulsion estimate seems to be currently unavailable for linearized covariance matrices, since these matrices are not mean-field (unlike the Wigner ensemble flow). 

In this paper, we compute the stochastic differential equations for the eigenvalue dynamics under the Dyson Brownian motion on this linearization. With a strong local law and rigidity estimates, we then follow the ideas of \cite{LSY} and construct a short-range cut-off for the eigenvalue dynamics. The key idea here is that the SDEs for short-range eigenvalues identifies with the SDEs for short-range eigenvalues of Wigner matrices. Because the short-range dynamics approximate the honest dynamics (for both linearized covariance matrices and Wigner matrices) to a scale $N^{-1 - \delta}$, it suffices to study local statistics of the cut-off equations. Using the results in \cite{LY} concerning the Wigner flow, we thus deduce universality for the linearized covariance flow. In particular, this paper avoids using the level-repulsion estimate directly for linearized covariance matrices, but rather uses the level-repulsion estimate for Wigner matrices instead. By the comparison with Wigner flow, we also deduce a weak level-repulsion for linearized covariance matrices, but one too weak to use the arguments of \cite{LY}. However, this implies that the universality problem for covariance matrices is more fruitful when looking at the linearization rather than the covariance matrix itself. This idea is also original to this paper as far as the author is aware.
\subsection{Acknowledgements}
The author thanks H.T. Yau, Benjamin Landon, and Patrick Lopatto for answering the author's questions pertaining to random regular graphs. This paper was written while the author was a student at Harvard University.
\subsection{Notation}
We adopt the Landau notation for big-Oh notation, and the notation $a \lesssim b$. For a real symmetric matrix $H$, we let $\sigma(H)$ denote its (real) spectrum.

%
%
%
\section{Matrix dynamics and the free convolution}
We let $\mathscr{M}$ denote the (finite-dimensional) Hilbert space of $M \times N$ matrices with real entries equipped with the following Hilbert-Schmidt inner product for any two matrices $A, B \in \mathscr{M}$:
\begin{align}
\langle A, B \rangle \ = \ \on{Tr}(A^{\ast} B).
\end{align}
Here, we assume, without loss of generality that $\alpha := M/N \geq 1$. Because $\mathscr{M}$ is a finite-dimensional Hilbert space, standard Gaussian theory (see Proposition 2.1 in \cite{Y2}) implies the existence of a basis-independent Gaussian measure and consequently a basis-independent Brownian motion on $\mathscr{M}$. This allows us to define the following matrix-valued SDE on $\mathscr{M}$:
\begin{align}
\d H(t) \ = \ \frac{1}{\sqrt{N}} \d\mathbf{B}(t), \quad H(0) \ = \ H. \label{eq:BMMatrix}
\end{align}
In contrast to \cite{Y2}, this is the SDE of interest for this paper; up to a simple change of variables in time this is equivalent to the matrix-valued Ornstein-Uhlenbeck process studied in \cite{Y2} for short times. We now introduce the space of interest:
\begin{align}
\mathscr{M}_{\ell} \ := \ \left\{ X \ = \ \begin{pmatrix} 0 & H^{\ast} \\ H & 0 \end{pmatrix}, \quad H \in \mathscr{M} \right\}.
\end{align}
As in \cite{Y2}, the space $\mathscr{M}_{\ell}$ inherits a Hilbert space structure and the corresponding Brownian motion from $\mathscr{M}$. The SDE \eqref{eq:BMMatrix} on $\mathscr{M}$ induces the following Brownian motion SDE on $\mathscr{M}_{\ell}$:
\begin{align}
\d X(t) \ = \ \begin{pmatrix} 0 & \d H(t) \\ \d H(t)^{\ast} & 0 \end{pmatrix}, \quad X(0) \ = \ X \ = \ \begin{pmatrix} 0 & H(0) \\ H(0)^{\ast} & 0 \end{pmatrix}. \label{eq:BMMatrixAugment}
\end{align}
We note that the matrix $X(t)$ may be parameterized via the following Gaussian perturbation of the initial data:
\begin{align}
X(t) \ = \ X(0) \ + \ \sqrt{t} \begin{pmatrix} 0 & W \\ W^{\ast} & 0 \end{pmatrix} \ =: \ X \ + \ \sqrt{t} W_{\ell},
\end{align}
where $W$ is a random matrix whose entries are i.i.d. centered Gaussian random variables with variance $1/N$. In particular, borrowing the notation from \cite{Y2}, which we will do in this paper from now on, we have $W_{\ell} = (W, W^{\ast})$.

We now define a notion of regularity for the initial data $X(0)$. First, for notational simplicity and consistency with \cite{LY}, we establish the notation $V = H(0)$.
\begin{assumption} \label{assumption:diagonalV}
The initial data $V$ is $M \times N$ diagonal, i.e. $V$ has the following form:
\begin{align}
V \ = \ \begin{pmatrix} 
V_1 & 0 & 0 & \ldots \\
0 & V_2 & 0 & \ldots \\
\vdots & \vdots & \ddots & \vdots \\
0 & 0 & \ldots & V_N \\
0 & 0 & \ldots & 0 \\
\ldots & \ldots & \ldots & \ldots \\
0 & 0 & \ldots & 0
\end{pmatrix}.
\end{align}
\end{assumption}
The representation $V$ may be attained from a (time-dependent) singular value decomposition. Indeed, the trace inner product on $\mathbf{M}_{M \times N}(\R)$ is invariant under the SVD, and thus so is the Brownian motion $\mathbf{B}(t)$.
\begin{remark}
The main result of this paper will be deterministic in $V$, which allows us to take an SVD of the initial data in the overall goal of universality for linearized covariance matrix ensembles.
\end{remark}

We now introduce the pair of parameters $(g_N, G_N) = (g, G)$ satisfying the following bounds for some fixed $\e_1 > 0$:
\begin{align}
\frac{1}{N} \ \leq \ g \ \leq \ N^{-\e_1}, \quad N^{\e_1} \ell \ \leq \ G \ \leq \ \mathscr{G}_{\e_1}(N),
\end{align}
where we take either $\mathscr{G}_{\e_1}(N) = N^{-\e_1}$ or $\mathscr{G}_{\e_1}(N) \asymp 1$.

For a real number $E_0$ we define the following interval, or energy window,
\begin{align}
\mathscr{I}_{E_0, G} \ = \ (E_0 \pm G).
\end{align}
Lastly, we define the following pseudo-Stieltjes transform for the initial data $V$ as a function of $z = E + i \eta \in \C_+$:
\begin{align}
m_{V}(E + i \eta) \ := \ \frac{1}{N} \sum_{i = 1}^N \ \frac{1}{V_i - E - i \eta}.
\end{align}
%
\begin{definition}
The initial potential $V$ is $(g, G)$-\emph{regular} at $E_0$ if the following pseudo-Stieltjes transform bounds hold uniformly over $z = E + i \eta$ with $E \in \mathscr{I}_{E_0, G}$ and $\eta \in [g, 10]$:
\begin{align}
c_V \ \leq \ \on{Im} m_V(E + i \eta) \ \leq \ C_V
\end{align}
for some $N$-independent constants $C_V, c_V > 0$. Moreover, we require the following spectral bounds:
\begin{itemize}
\item For some $N$-independent constant $B_V > 0$, we have
\begin{align}
\| V \|_{\infty} \ \leq \ N^{B_V},
\end{align}
where the norm $\| \cdot \|_{\infty}$ is the operator or spectral norm.
\item If $M > N$, we require, in addition for some $N$-independent constant $\e > 0$, the following lower bound:
\begin{align}
\inf_{i \in [[1, N]]} \ |V_i| \ &> \ \e \ > \ 0.
\end{align}
\end{itemize}
The lower bound on the singular values in the regime $M > N$ serves to regularize otherwise singular dynamics that we will define shortly. We note that for a large class of random matrix ensembles satisfying a local law, the uniform lower bound on the spectrum of $V$ is also satisfied, and is thus a reasonable constraint to impose.
\end{definition}
We now introduce an a priori delocalization estimate on the eigenvectors of the initial data $X(0) = (V, V^{\ast})$. This will not be necessary as the results in this paper may be obtained without eigenvector delocalization, but it will simplify arguments later in this paper. Moreover, any matrix ensemble satisfying a local law along the diagonal of the Green's function will satisfy the following delocalization estimate. 
\begin{ap}
Suppose $X(t)$ solves the SDE \eqref{eq:BMMatrixAugment} with $H(0) = V$. Then the eigenvectors of $X(t)$ are delocalized with high probability, i.e. for any growth parameter $\xi$ satisfying $\xi \log \xi \gg \log^2 N$ and time $t \geq 0$, we have
\begin{align}
\mathbb{P} \left( \sup_{\lambda(t) \in \sigma(X(t))} \ \sup_{i > M} \ \left| \mathbf{u}_{\lambda(t)}(i) \right| \ \geq \ \frac{\xi}{\sqrt{N}} \right) \ \leq \ e^{-\frac{\xi^2}{2}},
\end{align}
where we let $\mathbf{u}_{\lambda(t)}$ denote the $\ell^2$-normalized eigenvector of $X(t)$ corresponding to the eigenvalue $\lambda(t)$. Moreover, if $M = N$, then the constraint $i > M$ on the index may be removed.
\end{ap}
Lastly, we define the following sets of allowable times. For $\omega, \delta > 0$ to be determined, define
\begin{align}
\mathscr{T}_{\delta, \omega} \ &:= \ \left\{ t: \ \ell N^{\omega} \leq t \leq \ell N^{\omega + \delta} \right\}, \\
\mathscr{T}_{\omega} \ &:= \ \left\{ t: \ \ell N^{\omega} \leq t \leq G N^{-\omega} \right\}.
\end{align}
%
\subsection{Free convolution law}
We recall the density for the linearized Marchenko-Pastur law:
\begin{align}
\varrho(E) \ = \ \begin{cases}
	\frac{\gamma}{(1 + \gamma) \pi |E|} \sqrt{(\lambda_{+} - E^2)(E^2 - \lambda_{-})} & E^2 \in [\lambda_{-}, \lambda_{+}] \\
	0 & E^2 \not\in [\lambda_{-}, \lambda_{+}]
	\end{cases}.
\end{align}
For a more detailed discussion, we refer to Section 2 in \cite{Y1}. We also introduce the Wigner semicircle density, which describes the global statistics of the GOE ensemble:
\begin{align}
\varrho_{\on{sc}}(E) \d E \ = \ \mathbf{1}_{|E| \leq 2} \frac{\sqrt{4 - E^2}}{2 \pi} \d E. 
\end{align}
While the macroscopic eigenvalue statistics of linearized covariance matrices follow the linearized Marchenko-Pastur law given by the density $\varrho$, the Gaussian perturbation follows a slightly perturbed statistics. We follow the ideas of \cite{LY} and \cite{LS} and introduce the following interpolation of the spectra of $X$ and $\sqrt{t} W_{\ell}$ to address the perturbation in eigenvalue statistics.
\begin{definition}
The \emph{free convolution measure} of the Gaussian perturbation $X + \sqrt{t} W_{\ell}$ is the probability measure corresponding to the Stieltjes transform $m_{\on{fc},t}: \C_+ \to \C_+$, given by the unique solution to the following fixed-point equation:
\begin{align}
m_{\on{fc},t}(z) \ = \ \frac{1}{2N} \sum_{i = 1}^N \ \left( \frac{1}{V_i - z - t m_{\on{fc},t}(z)} + \frac{1}{-V_i - z - tm_{\on{fc},t}(z)} \right), \quad z \in \C_+.
\end{align}
\end{definition}
Here, we take for granted existence and uniqueness of solutions to the above fixed-point equation. We also take for granted the following properties of the solution. For a reference on the free convolution, we cite \cite{Bi}.
\begin{itemize}
\item The free convolution measure has a density $\varrho_{\on{fc},t}$ absolutely continuous with respect to Lebesgue measure on $\R$.
\item The density $\varrho_{\on{fc},t}$ is compactly supported and analytic on the interior of its support.
\end{itemize}
To each law $\varrho$, $\varrho_{\on{fc}}$, and $\varrho_{\on{sc}}$, we introduce the classical locations $\gamma_{i}$, $\gamma_{i,t}$, and $\mu_i$, respectively, via the following quantile formulas:
\begin{align}
\frac{i}{N} \ = \ \int_{-\infty}^{\gamma_{i}} \ \varrho(E) \ \d E \ = \ \int_{-\infty}^{\gamma_{i,t}} \ \varrho_{\on{fc},t}(E) \ \d E \ = \ \int_{-\infty}^{\mu_i} \ \varrho_{\on{sc}}(E) \ \d E.
\end{align}
These will be necessary in both stating and deriving the universality of eigenvalue statistics.
\subsection{Main results: correlation and gap universality}
We now introduce the two main results of this paper. The first concerns eigenvalue gap statistics, comparing the gap statistics between adjacent eigenvalues for the perturbed matrix $X(t) = X + \sqrt{t} W_{\ell}$ to the statistics for the GOE ensemble. 
\begin{theorem} \label{theorem:universalgaps}
Suppose $X(t) = X + \sqrt{t} W_{\ell}$, and assume $X = (V, V^{\ast})$, where $V$ is $(g, G)$-regular at $E_0$. Suppose $t \in \mathscr{T}_{\delta, \omega}$ for sufficiently small $\omega, \delta > 0$. Then, for any $O \in C_c^{\infty}(\R^n)$ and index $i$ such that $\gamma_{i,t} \in \mathscr{I}_{E_0, G/2}$, we have, for any indices $i_1, \ldots, i_n \leq N^{c_{\omega}}$,
\begin{align}
\E^{X(t)} &\left[ O \left( N \varrho_{\on{fc},t}(\gamma_{i,t}) (\lambda_{i} - \lambda_{i + i_1} ), \ldots, N \varrho_{\on{fc},t}(\gamma_{i,t}) (\lambda_{i} - \lambda_{i + i_n}) \right) \right] \nonumber \\
&- \ \E^{\on{GOE}} \left[ O \left( N \varrho_{\on{sc}}(\mu_i) (\lambda_{i} - \lambda_{i+i_1}), \ldots, N \varrho_{\on{sc}}(\mu_i)(\lambda_{i} - \lambda_{i + i_n} ) \right) \right]  \ \leq \ N^{-c_{\omega}}
\end{align}
for some universal constant $c_{\omega} > 0$.
\end{theorem}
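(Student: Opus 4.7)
The plan is to follow the short-range comparison strategy of \cite{LSY} and \cite{LY}: derive the eigenvalue SDEs for the linearized covariance flow, cut off long-range interactions, and match the short-range dynamics to an auxiliary Wigner short-range flow, at which point the gap universality of \cite{LY} can be invoked directly.

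First, I would derive the Dyson-type SDEs for the eigenvalues $\{\lambda_i(t)\}$ of $X(t)$ via It\^o calculus applied to first-order eigenvalue perturbation for \eqref{eq:BMMatrixAugment}. Because $X(t)$ is block anti-diagonal, the spectrum is symmetric, $\sigma(X(t)) = -\sigma(X(t))$, so the resulting SDEs split into a standard near-diagonal Dyson interaction $\sum_{j \neq i} (\lambda_i - \lambda_j)^{-1}$ and an additional contribution from the mirrored family $\{-\lambda_j(t)\}$. The a priori eigenvector delocalization estimate ensures that the quadratic variation of the driving martingale is $\frac{2}{\beta N} \, \d t$ up to a negligible high-probability error, so that the system is a symmetric perturbation of a $\beta = 1$ DBM.

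Using the $(g,G)$-regularity of $V$ together with a strong local law and rigidity for $m_{\on{fc},t}$ on $\mathscr{I}_{E_0, G}$ (propagated from the corresponding bounds on $m_V$), I would then introduce a short-range cutoff at an index scale of order $\ell N$ as in \cite{LSY}, replacing the long-range interaction --- including the contribution of the mirrored family $\{-\lambda_j(t)\}$, which sits at macroscopic distance from a bulk index $i$ with $\gamma_{i,t} \in \mathscr{I}_{E_0, G/2}$ --- by its deterministic rigidity approximation. A finite-speed-of-propagation estimate for DBM, combined with rigidity in $\mathscr{I}_{E_0, G/2}$, should then show that for $t \in \mathscr{T}_{\delta, \omega}$ the cutoff dynamics $\{\tilde\lambda_i(t)\}$ approximate the honest dynamics $\{\lambda_i(t)\}$ to additive error $N^{-1 - \delta}$ at bulk indices, i.e.\ below the gap scale $(N \varrho_{\on{fc},t}(\gamma_{i,t}))^{-1}$. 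This reduces the theorem to proving the analogous comparison for the short-range dynamics.

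The crux is that after the local change of variables $\lambda \mapsto \varrho_{\on{fc},t}(\gamma_{i,t}) \cdot \lambda$, the short-range SDE for $\tilde\lambda_i$ is identified with the short-range SDE for a Wigner flow $\tilde\mu_i(t)$ at the energy $\mu_i$; the mirrored contribution has been absorbed into the frozen long-range drift by the short-range cutoff, and the $(g,G)$-regularity together with the lower bound $\inf_i |V_i| > \e$ in the $M > N$ regime makes the remaining discrepancies negligible. The gap universality result of \cite{LY} for the short-range Wigner flow then applies verbatim to $\tilde\mu_i$, transfers to $\tilde\lambda_i$ by the identification, and finally transfers to $\lambda_i$ by the short-range approximation, yielding the bound $N^{-c_\omega}$.

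The main obstacle, as emphasized in the introduction, is that the Landon--Yau argument requires a level-repulsion estimate, which is not available for the non-mean-field linearized covariance ensemble. The short-range identification is designed precisely to bypass this: level repulsion is invoked only for the auxiliary Wigner flow and then pulled back through the matching. Executing this bypass cleanly --- in particular, making sure that the mirrored interaction contributes only to the frozen long-range drift on the scales of interest and does not reintroduce a genuine non-mean-field feature into the short-range system --- is where the technical work is concentrated.
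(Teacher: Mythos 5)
Your plan follows essentially the same route as the paper: derive the eigenvalue SDEs for the linearized covariance flow, introduce a short-range cutoff using the strong local law and rigidity, compare the cut-off dynamics to a coupled short-range Wigner/GOE flow, and invoke the Landon--Yau machinery so that level repulsion is only needed for the auxiliary Wigner flow. Two execution points are imprecise in your sketch, and these are exactly where the paper invests its work. First, your ``local change of variables $\lambda \mapsto \varrho_{\on{fc},t}(\gamma_{i,t}) \cdot \lambda$'' applied to the running process is not quite what is done: a time-dependent rescaling of the process would perturb both the noise coefficient and the drift. The paper instead keeps both short-range SDEs literally identical (the cutoff kills the structural difference, including the mirrored-family and $\frac{M-N}{\lambda_\alpha}$ terms), drives them by the \emph{same} Brownian motions, and matches the initial data once via the affine map $W \mapsto a_0 W + b_0$ chosen so the classical locations and local densities agree at index $i$; the short-range comparison result of \cite{LSY}/\cite{LY} then gives closeness of gaps despite the different initial conditions. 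Second, the theorem's statement scales by $\varrho_{\on{fc},t}(\gamma_{i,t})$ at the running time $t$ while the coupling is set up at $t_0$; the paper devotes a lemma to showing $|\varrho_{\on{fc},t}(\gamma_{i,t}) - \varrho_{\on{fc},t_0}(\gamma_{i,t_0})| \lesssim N^{\omega_1 - \omega_0}\log N$ and $|a_t - a_0| \lesssim t$, using time- and energy-derivative bounds on the free convolution, so the scaling factors can be interchanged without breaking the $o(N^{-1})$ gap estimate. Both of these are fixable within your framework, so the proposal is sound as a roadmap; just be aware that ``identification'' of the two SDEs is a matching of initial data under a single affine change of coordinates, not a pathwise change of variables, and that the time-dependence of the normalization is a genuine (if small) error term that must be estimated.
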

We note that Theorem \ref{theorem:universalgaps} gives a rough rate of decay explicitly in the gap statistics. We also remark the universality of the constant $c_{\omega}$ follows from the spectral bound on the initial data $V$ and depends on at most the $C^1$-norm of the test function $O$. These dependencies will follow from the proof of Theorem \ref{theorem:universalgaps}.

From Theorem \ref{theorem:universalgaps}, we may obtain the second result of this paper giving bulk universality of averaged correlation functions for perturbed matrices $X(t)$, which will avoid giving an explicit rate of decay and will instead pass to the limit $N \to \infty$. 
\begin{theorem} \label{theorem:universalcorrelation}
Assume the setting of Theorem \ref{theorem:universalgaps}, and for any fixed $c < \omega/2 \wedge \delta/2$, define the parameter $b = N^c/N$. Then for any $O \in C_{c}^{\infty}(\R^n)$, we have for any $E''$ in the interior of the support of $\varrho$:
\begin{align}
\int_{E_0 - b}^{E_0 + b} \ \frac{\dd E'}{2b} \int_{\R^n} \ &  O(\alpha_1, \ldots, \alpha_n) \nonumber \\
&\left[ \frac{1}{(\varrho_{\on{fc},t}(E_0))^n} \varrho_t^{(n)} \left( E' + \frac{\alpha_1}{N \varrho_{\on{fc},t}(E_0)}, \ldots, E' + \frac{\alpha_n}{N \varrho_{\on{fc},t}(E_0)} \right) - \begin{pmatrix} \varrho_{\on{fc},t} \rightarrow \varrho_{\on{sc}} \\ E' \rightarrow E'' \\ \varrho_t^{(n)} \rightarrow \varrho_{\on{GOE}}^{(n)} \end{pmatrix} \right] \ \dd^n \vec{\alpha} \ \rightarrow_{N \to \infty} \ 0,
\end{align}
where the vector notation denotes the same term but with replacements, e.g. replacing $\varrho_{\on{fc},t}$ with $\varrho_{\on{sc}}$.
\end{theorem}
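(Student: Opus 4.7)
The plan is to deduce Theorem~\ref{theorem:universalcorrelation} from Theorem~\ref{theorem:universalgaps} by the standard ``gaps-to-correlations'' reduction (see \cite{LY} for the Wigner case), exploiting the fact that for $b = N^c/N$ with $c < \omega/2 \wedge \delta/2$, the averaging window $[E_0 - b, E_0 + b]$ is mesoscopic: it contains $\sim N^c$ eigenvalues, yet lies well inside the macroscopic regularity window $\mathscr{I}_{E_0, G/2}$ on which gap universality applies.

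The first step is to rewrite the left-hand integrand. Unfolding the definition of $\varrho_t^{(n)}$ as an expectation of symmetric sums over $n$-tuples of distinct eigenvalues, substituting $\alpha_k = N\varrho_{\on{fc},t}(E_0)(\lambda_{i_k} - E')$, and then changing variables $u = N\varrho_{\on{fc},t}(E_0)(\lambda_{i_1} - E')$ for a distinguished index $i_1$, the integral becomes
\begin{align*}
\frac{1}{2bN\varrho_{\on{fc},t}(E_0)} \, \E^{X(t)}\left[\sum_{i_1} \one_{\lambda_{i_1} \in [E_0-b,E_0+b]} \int \d u \sum_{\vec{m}} \widetilde{O}_u\left(\vec{g}_{i_1}(\vec{m})\right)\right],
\end{align*}
where $\widetilde{O}_u(\vec{g}) := O(-u, g_2 - u, \ldots, g_n - u)$ and $\vec{g}_{i_1}(\vec{m})$ has components $N\varrho_{\on{fc},t}(\gamma_{i_1,t})(\lambda_{i_1} - \lambda_{i_1 + m_k})$. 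The compact support of $O$ restricts $|m_k|$ to a constant $C_O$, and rigidity (implicit in the $(g,G)$-regularity of $V$) restricts $i_1$ to a deterministic set $J$ of size $\sim 2N^c \varrho_{\on{fc},t}(E_0)$, all with $\gamma_{i_1, t} \in \mathscr{I}_{E_0, G/2}$.

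For each $i_1 \in J$ and each admissible $\vec{m}$, I would apply Theorem~\ref{theorem:universalgaps} to the test function $\widetilde{O}_u$ (taken pointwise in $u$), incurring an error $N^{-c_\omega}$ uniformly in $u$ over its effective compact range. Summing over the $\sim N^c$ indices and dividing by the normalization yields an aggregate error $O(N^{-c_\omega}) \to 0$. Applying the same rewriting in reverse on the GOE side with reference energy $E''$ in the bulk of the semicircle, and invoking translation invariance of bulk GOE correlation functions (a consequence of Gaudin-Mehta), matches the second term in the theorem statement.

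The main technical obstacle will be treating $\varrho_{\on{fc},t}$ as effectively constant at the scale $b$ in the rescaling: the substitution pairs $E_0$ against $\gamma_{i_1,t}$ in the gap rescaling, and the resulting density-mismatch error must be shown to be $o(1)$. Since $\varrho_{\on{fc},t}$ is analytic on the interior of its support and $b = N^c/N$ is very small, a Taylor expansion should suffice, but care is needed near the boundary of $\mathscr{I}_{E_0, G/2}$ so that the indices $i_1 \pm C_O$ remain inside the regularity window; this is where the condition $b \ll G$ enters critically. A secondary subtlety is handling tuples with coincident or nearly-coincident indices separately from the distinct-tuples sum, which is dispatched by the compact support of $O$ together with rigidity on consecutive eigenvalue spacings.
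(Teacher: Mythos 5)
The paper gives no proof of Theorem~\ref{theorem:universalcorrelation}; it simply cites the standard gaps-to-correlations reduction in \cite{ESY} and \cite{LY} and omits the details. Your proposal correctly sketches exactly that reduction — unfolding the averaged correlation function into a sum over $\sim N^c$ base indices in the regularity window, applying Theorem~\ref{theorem:universalgaps} to translated test functions with a uniform $N^{-c_\omega}$ error, absorbing the density mismatch $\varrho_{\mathrm{fc},t}(E_0)$ vs.\ $\varrho_{\mathrm{fc},t}(\gamma_{i_1,t})$ by analyticity and $b \ll G$, and invoking translation invariance of bulk GOE statistics to match the $E''$ reference — so it is essentially the same approach the paper references.
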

The method for deriving the universality of correlation functions from the gap universality may be found in \cite{ESY} and \cite{LY}, so we omit it.
\subsection{Applications to random matrix ensembles}
We now apply Theorem \ref{theorem:universalcorrelation} to the matrix ensemble of bipartite graphs. By Theorem 2.5 in \cite{Y1}, we deduce that with high probability the adjacency matrix of a bipartite graph is $(g, G)$-regular. To be precise, we let $\Xi$ denote an event holding with high probability on which the adjacency matrix is regular. Because Theorem \ref{theorem:universalcorrelation} is deterministic in the initial data we deduce that on $\Xi$, the averaged local correlation functions of the bipartite graph, after evolving under the DBM for time $t = N^{-1 + \e}$, coincide with those of the GOE. Meanwhile, by the bound $\mathbb{P} \Xi^C \leq N^{-D}$ for all $D > 0$, the contribution from the expectation term in Theorem \ref{theorem:universalcorrelation} on the complement event $\Xi^C$ is negligible in the limit as $N \to \infty$. We thus deduce that, after time $t = N^{-1 + \e}$, the averaged local correlation functions of bipartite graphs and the GOE coincide. Thus by the short-time stability of correlation functions from Theorem 2.11 and 2.12 in \cite{Y2}, this completes the proof of universality for averaged bulk local correlation functions for bipartite graphs.

We conclude this introduction by applying Theorem \ref{theorem:universalgaps} and Theorem \ref{theorem:universalcorrelation} to other covariance matrix ensembles of interest. Applying Theorem \ref{theorem:universalcorrelation} in the following manners still leaves incomplete answers to important questions, but we provide the following outline to give a broad picture of the universality problem for covariance matrices.
\begin{itemize}
\item Concerning the ensemble of sparse covariance matrices as studied in \cite{A}, by Theorem \ref{theorem:universalcorrelation}, to prove universality of local correlation functions it suffices to show short-time stability of eigenvalue statistics. To this end, the arguments in \cite{HLY} applied to linearized covariance matrices should suffice in proving the desired short-time stability.
\item Concerning the strongly regular initial data in \cite{NP2}, by Theorem \ref{theorem:universalgaps}, to show universality of eigenvalue gaps for the corresponding linearization matrices it now suffices to show short-time stability of gap statistics. This seems to require an optimal level repulsion estimate for linearized covariance matrices, which unfortunately seems to be out of reach for now. We remark on this later in this paper.
\end{itemize}
%
%
%
\section{Strong local law and rigidity}
In this section, we derive the necessary local law and rigidity estimates that will help us control eigenvalues in using stochastic analysis to study the DBM equations. We emphasize the ideas and techniques in deriving these estimates are quite distinct from the methods used to analyze the DBM equations, so the reader is welcome to skip the proof of the main results in this section and go straight for the derivation and analysis of the DBM equations in subsequent sections.
\subsection{Main results}
We begin by introducing the following notion of high probability.
\begin{definition} \label{definition:highprobabilitylargedeviationsappendix}
We say an event $\Omega$ holds with $(\xi,\nu)$-\emph{high probability} if
\begin{align}
\mathbb{P} \left( \Omega^C \right) \ \leq \ e^{-\nu \log^{\xi} N}.
\end{align}
We say an event $\Omega_1$ holds with $(\xi,\nu)$-high probability \emph{on $\Omega_2$} if 
\begin{align}
\mathbb{P} \left( \Omega_1^C \cap \Omega_2 \right) \ \leq \ e^{-\nu \log^{\xi} N}.
\end{align}
\end{definition}
Lastly, before we state the local law for $X(t)$, we define the following spectral domains:
\begin{align}
\mathbf{D}_1 \ &= \ \left\{ z = E + i \eta: \ E \in \mathscr{I}_{E_0, qG}, \quad \eta \in [N^{-1} \varphi^L, 10] \right\}, \\
\mathbf{D}_2 \ &= \ \left\{ z = E + i \eta: \ |E| \leq N^{5 B_V}, \quad \eta \in [10, N^{10B_V + 1}] \right\} \\
\mathbf{D}_{L, q} \ &= \ \mathbf{D}_1 \cup \mathbf{D}_2.
\end{align}
We also define the following \emph{partial} Stieltjes transform of $X(t)$:
\begin{align}
m_N(z; t) \ = \ \frac{1}{2N} \sum_{i = 1}^N \ \left( \frac{1}{\lambda_i - z} + \frac{1}{- \lambda_i - z} \right),
\end{align}
where the sum is taken over eigenvalues $\pm \lambda_i$ such that $|\lambda_i|^2 \in \sigma(X(t)^{\ast} X(t))$.
\begin{theorem} \label{theorem:stronglocallawstronglocallawdeformedmatrices}
Suppose $V$ is $(g, G)$-regular at $E_0$. Let
\begin{align}
\xi \ = \ \frac{A_0 + o(1)}{2} \log \log N
\end{align}
and fix $q < 1$ and $L \geq 40 \xi$. For any $z \in \mathbf{D}_{L,q}$, we have
\begin{align}
\sup_{t \in \mathscr{T}_\omega} \ \left| m_N(z; t) - m_{\on{fc}, t}(z) \right| \ \leq \ \frac{\varphi^{C_1 \xi}}{N \eta}
\end{align}
with $(\xi, \nu)$-high probability and $M, N$ sufficiently large. Here, $\nu$ and $C_1$ are constants depending on all data involved in the statement of this result \emph{except} the dimension $M, N$. The notion of $M, N$ sufficiently large is in the sense of depending on all other data involved in the statement of this theorem.
\end{theorem}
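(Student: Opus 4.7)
The plan is to prove the local law via the standard self-consistent resolvent approach: derive an approximate fixed-point equation for $m_N(z;t)$, invoke stability of the free convolution equation to control $|m_N - m_{\on{fc},t}|$, and bootstrap in $\eta$ from the trivial regime $\mathbf{D}_2$ down to the microscopic scale at the lower edge of $\mathbf{D}_1$. The argument mirrors the analyses in \cite{LY} and \cite{Y2}, adapted to the block linearization structure of $X(t) = (V + \sqrt{t}\, W, V^* + \sqrt{t}\, W^*)$.

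First, working with the resolvent $G(z;t) = (X(t) - z)^{-1}$ and the $(M+N) \times (M+N)$ block structure of $X(t)$, I would apply the Schur complement formula to express each diagonal entry $G_{ii}$ in terms of a Gaussian quadratic form in the $i$-th row of the perturbation $W_\ell$ and the corresponding minor resolvent $G^{(i)}$. The linearization structure is beneficial here: summing over $i$ pairs the diagonal entries on the two diagonal blocks of $G$ and produces, after concentration of the Gaussian quadratic forms around their averaged trace, the approximate self-consistent equation
\[
m_N(z;t) \ = \ \frac{1}{2N}\sum_{i=1}^{N}\left( \frac{1}{V_i - z - t\, m_N(z;t)} + \frac{1}{-V_i - z - t\, m_N(z;t)}\right) \ + \ \mathcal{E}(z;t),
\]
with random error bounded by $|\mathcal{E}(z;t)| \leq \varphi^{C\xi}/(N\eta)$ on an event of $(\xi,\nu)$-high probability. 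The $(g,G)$-regularity hypothesis on $V$ delivers the quantitative lower bound $\on{Im}\, m_{\on{fc},t}(z) \gtrsim 1$ uniformly on $\mathbf{D}_1$, which is the key input to stability of the free convolution fixed-point equation; this stability converts the additive error $\mathcal{E}$ into the desired bound on $|m_N - m_{\on{fc},t}|$, provided one already has the a priori control $|m_N - m_{\on{fc},t}| \ll 1$. That a priori control is immediate on $\mathbf{D}_2$, where both quantities are trivially $O(\eta^{-1})$, and is then propagated into $\mathbf{D}_1$ by a standard continuity argument in $\eta$, decreasing $\eta$ by $N^{-C}$ increments.

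The main obstacle will be uniformizing the $(\xi,\nu)$-high-probability bound over the continuum of parameters $t \in \mathscr{T}_\omega$ and $z \in \mathbf{D}_{L,q}$. I would handle this by a finite-net argument: take a net of scale $N^{-K}$ in both $t$ and $z$ for $K$ large, absorb the polynomial union-bound loss into the factor $\varphi^{C_1 \xi}$, and interpolate between net points using the deterministic Lipschitz estimates $\|\partial_z G\|_\infty \lesssim \eta^{-2}$ and $\|\partial_t G\|_\infty \lesssim \eta^{-2}(1 + \|W_\ell\|_\infty)$ combined with the crude tail bound $\|W_\ell\|_\infty \leq N^{O(1)}$ that follows from Gaussianity. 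The eigenvector delocalization a priori estimate will streamline the isotropic large deviation estimates for the Gaussian quadratic forms appearing in the Schur complement reduction, eliminating off-diagonal contributions that would otherwise require separate treatment.
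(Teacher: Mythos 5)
Your outline reproduces the high-level architecture of the paper's proof (Schur complement to a self-consistent equation, stability of the free-convolution fixed point using $\on{Im}\, m_{\on{fc},t}\asymp 1$ from $(g,G)$-regularity, bootstrap in $\eta$ from $\mathbf{D}_2$ down into $\mathbf{D}_1$, and a net/continuity argument in $t$), but there is a genuine gap at the heart of the quantitative claim. Concentration of the Gaussian quadratic forms appearing in the Schur complement only yields an error of order $\sqrt{N^{\e}/(N\eta)}$ per index, not $\varphi^{C\xi}/(N\eta)$: the variance of the fluctuation term $h_i^{\ast} G^{(i)} h_i - \tfrac{1}{N}\on{Tr}\,G^{(i)}$ is $\asymp \tfrac{1}{N}\sum_j |G^{(i)}_{jj}|^2 \asymp \on{Im}\, m/(N\eta)$, so its square root is $\asymp (N\eta)^{-1/2}$. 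This gives only the \emph{weak} local law. To reach the strong rate $(N\eta)^{-1}$ asserted in the theorem, one must average the self-consistent equation over the index $i$ and exploit cancellation among the centered fluctuation terms $Z^{(i)}$ across different $i$; this is the fluctuation-averaging lemma (here Proposition 3.21), whose proof requires a combinatorial high-moment expansion and cannot be replaced by a per-index concentration bound plus a union bound. Your proposal asserts the $(N\eta)^{-1}$ bound at the point where only $(N\eta)^{-1/2}$ is available, and never invokes a mechanism to close that square-root gap.

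Two secondary points. First, because the diagonal of $X(t)$ vanishes and the deterministic part $V$ sits in an off-diagonal block, the single-index Schur complement you describe produces a quadratic form mixing $V_i$ with the Gaussian entries; the paper instead removes the \emph{pair} of indices $(i, M+i)$ simultaneously (Lemma 3.6), which isolates $V_i$ in a deterministic $2\times 2$ block and leaves a clean Gaussian quadratic form $v^{\ast}G^{(\mathbb{T})}v$. Your version is salvageable but requires extra care to separate the $V_i$ contribution, and the paper's pair-removal is the route actually taken. Second, for uniformity in $t$ the paper does not rely on crude Lipschitz bounds for $\partial_t G$ involving $\|W_\ell\|_\infty$; it instead writes the SDE for $m_N(z;t)$ via Ito's formula on the eigenvalue dynamics and a transport PDE for $m_{\on{fc},t}$, then partitions $\mathscr{T}_\omega$ into $N^{O(1)}$ subintervals of length $N^{-5B_V-4}$. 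Your net argument is in the same spirit and should also work, but note that the deterministic Lipschitz constant of $G$ in $t$ genuinely depends on $\|W_\ell\|_\infty$, so you must condition on the high-probability event $\|W_\ell\|_\infty = O(1)$ first; otherwise the $N^{O(1)}$ tail loss does not obviously cancel against the net gain.
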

From the local law, we deduce an optimal rigidity estimate on the location of eigenvalues. We first establish the following notation: for a constant $0 < q < 1$ and a time $t$ we define
\begin{align}
\mathscr{A}_{q,t} \ = \ \left\{i: \ \gamma_{i,t} \in \mathscr{I}_{E_0, qG} \right\}
\end{align}
for some fixed energy $E_0$. The following rigidity estimate is a consequence of Theorem \ref{theorem:stronglocallawstronglocallawdeformedmatrices} by standard methods in Helffer-Sjostrand functional calculus; for details, we refer to \cite{LY}.
\begin{theorem} \label{theorem:optimalrigiditystronglocallawdeformedmatrices}
For some constants $\nu, c_4 > 0$ depending on the data in Theorem \ref{theorem:stronglocallawstronglocallawdeformedmatrices}, for $i \in \mathscr{A}_{q,t}$, we have
\begin{align}
\sup_{t \in \mathscr{T}_\omega} \ \left| \lambda_{i,t} - \gamma_{i,t} \right| \ \leq \ \frac{\varphi^{c_4 \xi}}{N} \label{eq:rigidity}
\end{align}
with $(\xi,\nu)$-high probability. In particular, if $V$ is $(g, G)$-regular for $G \asymp C$ sufficiently large, and if $\| V \|_{\infty} = O(1)$, then the rigidity estimate \eqref{eq:rigidity} holds for all indices.
\end{theorem}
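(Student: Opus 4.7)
The plan is to deduce the rigidity estimate from Theorem \ref{theorem:stronglocallawstronglocallawdeformedmatrices} via the Helffer--Sjöstrand functional calculus, following the template developed in \cite{LY} for Wigner matrices. The key observation is that control on $m_N(z;t) - m_{\on{fc},t}(z)$ at scale $\eta \sim \varphi^L/N$ on the domain $\mathbf{D}_{L,q}$ translates into $\varphi^{C\xi}$-scale control of the eigenvalue counting function $\mathcal{N}_t(E) := \#\{i : \lambda_{i,t} \leq E\}$, which in turn yields rigidity of individual eigenvalues provided $\varrho_{\on{fc},t}$ is bounded below on $\mathscr{I}_{E_0, qG}$.

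First, I would fix $E \in \mathscr{I}_{E_0, qG}$ and construct an almost-analytic extension $\tilde{f}_E(z)$ of a smooth mollification $f_E$ of $\mathbf{1}_{(-\infty, E]}$ with mollification scale $\eta_0 = \varphi^L/N$. The Helffer--Sjöstrand identity gives
\begin{align}
\on{Tr}\, f_E(X(t)) - N \int f_E(E') \varrho_{\on{fc},t}(E') \, \d E' \ = \ \frac{N}{\pi} \int_{\C_+} \partial_{\bar z} \tilde{f}_E(z) \bigl( m_N(z;t) - m_{\on{fc},t}(z) \bigr) \, \d A(z).
\end{align}
Splitting the right-hand side over the two pieces of $\mathbf{D}_{L,q}$, inserting the estimate $|m_N - m_{\on{fc},t}| \lesssim \varphi^{C_1 \xi}/(N \eta)$ from Theorem \ref{theorem:stronglocallawstronglocallawdeformedmatrices} on $\mathbf{D}_1$, and using a trivial Stieltjes bound together with the spectral cutoff $\|V\|_\infty \leq N^{B_V}$ on $\mathbf{D}_2$, the standard derivative bounds on $\tilde{f}_E$ would yield
\begin{align}
\left| \mathcal{N}_t(E) - N \int_{-\infty}^{E} \varrho_{\on{fc},t}(E') \, \d E' \right| \ \leq \ \varphi^{C_2 \xi}
\end{align}
with $(\xi, \nu)$-high probability, uniformly in $E \in \mathscr{I}_{E_0, qG}$ and $t \in \mathscr{T}_\omega$. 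Inverting this counting estimate at the classical locations $\gamma_{i,t}$, and using that $(g,G)$-regularity forces $\varrho_{\on{fc},t}(\gamma_{i,t})$ to be bounded below by a positive constant throughout $\mathscr{I}_{E_0, qG}$, converts the counting-function error into the bound $|\lambda_{i,t} - \gamma_{i,t}| \leq \varphi^{c_4 \xi}/N$ for $i \in \mathscr{A}_{q,t}$. The ``all indices'' claim when $G \asymp C$ is sufficiently large and $\|V\|_\infty = O(1)$ is then immediate, since $\mathscr{I}_{E_0, G}$ covers the full (compact) support of $\varrho_{\on{fc},t}$ in that regime.

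The main technical obstacle lies not in any single step but in the bookkeeping: one must confirm that (i) the boundary error of size $N \eta_0 = \varphi^L$ from replacing the smooth trace against $f_E$ by $\mathcal{N}_t(E)$, (ii) the error from extending the contour into $\mathbf{D}_2$, and (iii) the logarithmic loss in the dyadic decomposition in $\eta$ on $\mathbf{D}_1$ all combine into a single factor $\varphi^{C_2 \xi}$. A secondary subtlety is the behavior of $m_{\on{fc},t}$ near $E = 0$ in the rectangular case $M > N$; this is precisely what the lower bound $\inf_i |V_i| > \e$ built into $(g,G)$-regularity is designed to rule out, so the Helffer--Sjöstrand estimates are not disrupted by any hard edge near the origin. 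Uniformity over $t \in \mathscr{T}_\omega$ is inherited directly from the corresponding uniformity in Theorem \ref{theorem:stronglocallawstronglocallawdeformedmatrices}. Apart from these bookkeeping issues, the argument is a direct transcription of the Wigner-case Helffer--Sjöstrand deduction given in \cite{LY}.
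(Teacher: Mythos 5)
Your proposal uses the Helffer--Sjöstrand functional calculus applied to the strong local law exactly as the paper does; the paper states this rigidity estimate "is a consequence of Theorem \ref{theorem:stronglocallawstronglocallawdeformedmatrices} by standard methods in Helffer-Sjostrand functional calculus" and defers details to \cite{LY}, which is precisely the template you reconstruct. Your filling-in of the counting-function argument, the $\mathbf{D}_1$/$\mathbf{D}_2$ split, and the remark about the hard edge near $E=0$ in the rectangular case are all consistent with what the cited reference would supply, so this is essentially the same approach.
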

\subsection{Derivation of the self-consistent equation}
The following matrix identity will be crucial in deriving a fixed-point equation for the Stieltjes transform $m_N(z; t)$. Before we give the statement of the result, we first recall the following notation for any set of indices $\mathbb{T} \subset \{1, \ldots, M+N\}$.
\begin{notation}
We let $X^{(\mathbb{T})}$ denote the matrix obtained from removing the rows and columns indexed by elements of $\mathbb{T}$. We let $G^{(\mathbb{T})}$ denote the Green's function of $X^{(\mathbb{T})}$.
\end{notation}
%
\begin{lemma} \label{lemma:GFmatrixequation}
Retaining the definition of $X(t)$, for any index $i \in [[1, N]]$, we have with $\mathbb{T} = (i, M+i)$, 
\begin{align}
\begin{pmatrix}
G_{ii} & G_{i, M+i} \\
G_{M+i, i} & G_{M+i, M+i}
\end{pmatrix} \ = \ \left( \begin{pmatrix} -z & V_i + \sqrt{t} W_{ii} \\ V_i + \sqrt{t} W_{ii} & -z \end{pmatrix} - t v^\ast G^{(\mathbb{T})} v \right)^{-1} \label{eq:GFmatrixequation}
\end{align}
where
\begin{align}
v^{\ast} \ = \ \begin{pmatrix}
0 & 0 & \ldots & 0 & W_{12} & W_{13} & \ldots & W_{1N} \\
W_{21} & W_{31} & \ldots & W_{M1} & 0 & 0 & \ldots & 0
\end{pmatrix}.
\end{align}
\end{lemma}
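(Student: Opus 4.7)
The plan is to apply the Schur complement (block inversion) formula to $X(t) - zI$ with the principal $2 \times 2$ block taken at rows and columns $\mathbb{T} = \{i, M+i\}$. After permuting rows and columns to bring $\mathbb{T}$ to the top, we write
$$X(t) - zI \ = \ \begin{pmatrix} A & B \\ B^* & D \end{pmatrix},$$
where $A$ is the principal $2 \times 2$ block, $B$ is the $2 \times (M+N-2)$ off-diagonal block, and $D = X^{(\mathbb{T})}(t) - zI$. The standard block-inversion identity then gives
$$\begin{pmatrix} G_{ii} & G_{i, M+i} \\ G_{M+i, i} & G_{M+i, M+i} \end{pmatrix} \ = \ \bigl( A - B D^{-1} B^* \bigr)^{-1},$$
so the task reduces to identifying $A$ and $B D^{-1} B^*$ with the expressions on the right-hand side of \eqref{eq:GFmatrixequation}.

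To identify $A$, note that both diagonal blocks of the linearization $X(t)$ vanish, so the two diagonal entries of $A$ each equal $-z$. The pair $(i, M+i)$ straddles the two off-diagonal blocks of $X(t)$, which are $H(t) = V + \sqrt{t} W$ and its transpose; Assumption \ref{assumption:diagonalV} on $V$ then forces both off-diagonal entries of $A$ to equal $V_i + \sqrt{t} W_{ii}$, reproducing the first matrix on the right side of \eqref{eq:GFmatrixequation}.

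To identify $B$, one reads off rows $i$ and $M+i$ of $X(t)$ and deletes columns $i$ and $M+i$. Row $i$ is supported on the top-right off-diagonal block; after removing its non-noise entry at column $M+i$, the surviving contributions are $\sqrt{t} W_{ij}$ for $j \neq i$, padded with $M-1$ zeros from the vanishing top-left block. Row $M+i$ is supported on the bottom-left off-diagonal block; after removing its non-noise entry at column $i$, the surviving contributions are $\sqrt{t} W_{ji}$ for $j \neq i$, padded with $N-1$ zeros from the vanishing bottom-right block. Assembling these two rows gives $B = \sqrt{t} v^*$ with $v^*$ exactly as stated; since $D^{-1} = G^{(\mathbb{T})}$ by definition, we conclude $B D^{-1} B^* = t v^* G^{(\mathbb{T})} v$, as claimed.

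The argument is purely linear-algebraic, and the only real point to watch is the bookkeeping of indices: one must verify that deleting rows and columns $\mathbb{T}$ from $X(t)$ commutes, after rearrangement, with the block decomposition adopted above, and that the distribution of zeros and noise entries in $v^*$ matches the paper's convention for the off-diagonal blocks of the linearization. No probabilistic or analytic input is required, so there is no genuine obstacle beyond this bookkeeping.
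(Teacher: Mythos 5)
Your proposal is correct and follows the same route the paper indicates: the paper's proof is the one-line remark immediately below the lemma that the identity follows from the Schur complement formula applied to the rows and columns indexed by $i$ and $M+i$, which is exactly what you carry out in detail. The only small stylistic note is that Assumption \ref{assumption:diagonalV} (diagonality of $V$) is needed not just to identify the off-diagonal entries of $A$ as $V_i + \sqrt{t}W_{ii}$ but also to see that the surviving entries of $B$ are $\sqrt{t}W_{ij}$ with no $V$-contribution; you invoke it explicitly only for $A$, though the conclusion is of course unchanged.
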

Indeed, Lemma \ref{lemma:GFmatrixequation} follows from the Schur Complement Formula, removing all rows and columns from $X(t)$ except those corresponding to the indices $i, M+i$.

To organize the derivation of the self-consistent equation more systematically, we introduce the following pieces of notation.
\begin{notation}
For the index $k \in [[1, N]]$, we let $G_{kk}$ denote the Green's function entry $G_{M+k, M+k}$. For any other index, i.e. those not denoted by $k$, we retain the usual matrix entry notation. For the following calculations, we let $k \leftrightsquigarrow M+1$ correspond to the first index larger than $M$.
\end{notation}
\begin{notation}
We denote the entries of the matrix $v^{\ast} G^{(\mathbb{T})} v$ as in Lemma \ref{lemma:GFmatrixequation} by $M_{ij}$ for $i, j = 1,2$. Thus, we have the following formulas defining $M_{ij}$:
\begin{align}
M_{11} \ &= \ \sum_{k, \ell = 1}^{N-1} \ W_{1, k+1} G^{(\mathbb{T})}_{M- 1 + k, M - 1 + \ell} W_{1, \ell + 1}, \\
M_{12} \ &= \ \sum_{k = 1}^{M-1} \ \sum_{\ell = 1}^{N-1} \ W_{k+1, 1} G_{k, M - 1 + \ell}^{(\mathbb{T})} W_{1, \ell + 1}, \\
M_{21} \ &= \ \sum_{k = 1}^{N-1} \ \sum_{\ell = 1}^{M-1} \ W_{1, k+1} G^{(\mathbb{T})}_{M - 1 + k, \ell} W_{\ell + 1, 1}, \\
M_{22} \ &= \ \sum_{k, \ell = 1}^{M-1} \ W_{k+1, 1} G^{(\mathbb{T})}_{k \ell} W_{\ell+1, 1}.
\end{align}
\end{notation}
By Lemma \ref{lemma:GFmatrixequation}, we deduce
\begin{align}
G_{kk} \ &= \ \frac{-z - tM_{11}}{(-z - t M_{11})(-z - t M_{22}) \ - \ (V_1 + \sqrt{t} W_{11} - t M_{12})(V_1 + \sqrt{t} W_{11} - t M_{21})} \\
&= \ \frac{-z - tM_{11}}{(-z - t M_{11})(-z - t M_{22}) \ - \ \left| V_1 + \sqrt{t} W_{11} - t M_{12}\right|^2}, \label{eq:repGFdiagonal}
\end{align}
where the second equation holds by the Hermitian property of the Green's function. We now expand the above representation of $G_{kk}$. To do so, we first define the following control parameters, most of which will serve as error terms in the desired self-consistent equation for $m_N(z; t)$. 
\begin{align}
g_1^{\pm} \ &= \ \frac{1}{\pm V_1 - z - t m_{N}(z;t)}, \\
\psi^{(\mathbb{T})} \ &= \ m_N(z;t) - m_N^{(\mathbb{T})}(z;t).
\end{align}
With these main terms, we define the following eventual error terms:
\begin{align}
F_1 \ &= \ t^2 \left( \frac{M_{11} - M_{22}}{2} \right)^2, \\
F_2 \ &= \ \frac{M_{11} + M_{22}}{2} - m_N^{(\mathbb{T})}(z; t), \\
F_3^{\pm} \ &= \ g_1^{\pm} \left( \pm \sqrt{t} W_{11} \mp t A_{12} - t F_2 + t \psi^{(\mathbb{T})} \right).
\end{align}
Lastly, we define the following accumulated error terms which will collect all error terms in the self-consistent equation:
\begin{align}
\mathscr{E}_{1} \ &= \ F_3^+ + F_3^- + F_3^+ F_3^- - g_1^+ g_1^- F_1, \\
\mathscr{K}_{1} \ &= \ \frac{-t[(M_{11} - m_N^{(\mathbb{T})}(z;t)] - \psi^{(\mathbb{T})}}{-z - t m_N(z;t)}.
\end{align}
For general indices, the error parameters are defined by changing $1 \to i$ in all of the definitions.

We now state the following result from which we deduce a fixed-point equation for $m_N(z; t)$ up to a controllable error term.
\begin{prop} \label{prop:expansionGFdiagonal}
In the setting above, for indices $k = M+1, \ldots, M+N$, defining $i = k - M$, we have
\begin{align}
G_{kk} \ = \ \frac12 \left( g_i^+ + g_i^- \right) \frac{1 + \mathscr{E}_i}{1 + \mathscr{K}_i}. \label{eq:expansionGFdiagonal}
\end{align}
\end{prop}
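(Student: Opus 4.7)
The plan is to prove \eqref{eq:expansionGFdiagonal} as a direct algebraic identity by factoring the denominator and numerator of the representation \eqref{eq:repGFdiagonal} into the shapes $(a^2 - V_i^2)(1 + \mathscr{E}_i)$ and $a(1 + \mathscr{K}_i)$ respectively, where $a := -z - t m_N(z;t)$ satisfies the elementary identities $g_i^{\pm} = (\pm V_i + a)^{-1}$, $a^2 - V_i^2 = (g_i^+ g_i^-)^{-1}$, and $\tfrac{1}{2}(g_i^+ + g_i^-) = a\, g_i^+ g_i^-$. With these in hand, \eqref{eq:expansionGFdiagonal} follows by plugging the factorizations into \eqref{eq:repGFdiagonal} and using the last of the three identities to rewrite the scalar prefactor.

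For the denominator, I would use the tautological decomposition $M_{11} = m_N^{(\mathbb{T})} + F_2 + \tfrac{1}{2}(M_{11} - M_{22})$ and $M_{22} = m_N^{(\mathbb{T})} + F_2 - \tfrac{1}{2}(M_{11} - M_{22})$, together with $m_N^{(\mathbb{T})} = m_N - \psi^{(\mathbb{T})}$, to rewrite
\begin{equation*}
(-z-tM_{11})(-z-tM_{22}) = \bigl(a + t\psi^{(\mathbb{T})} - tF_2\bigr)^2 - F_1.
\end{equation*}
Subtracting $|V_i + \sqrt{t}W_{ii} - tM_{12}|^2$ and applying a difference of squares produces the product of two linear expressions in $(a - V_i)$ and $(a + V_i)$, minus $F_1$. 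Factoring $(a \pm V_i)$ out of the respective brackets and recognizing the residuals as $1 + F_3^{\mp}$ (which is precisely the content of the definition of $F_3^{\pm}$ after multiplying through by $g_i^{\pm}$) gives
\begin{equation*}
(-z - tM_{11})(-z - tM_{22}) - |V_i + \sqrt{t}W_{ii} - tM_{12}|^2 = (a^2 - V_i^2)\bigl[(1 + F_3^+)(1 + F_3^-) - g_i^+ g_i^- F_1\bigr] = \frac{1 + \mathscr{E}_i}{g_i^+ g_i^-}.
\end{equation*}

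For the numerator, the identity $-z - tM_{11} = a + t\psi^{(\mathbb{T})} - t(M_{11} - m_N^{(\mathbb{T})})$ exhibits $(-z-tM_{11})/a$ in the form $1 + \mathscr{K}_i$, after aligning signs and factors with the stated definition. Combining with the denominator factorization gives $G_{kk} = a(1 + \mathscr{K}_i)\,g_i^+ g_i^- / (1 + \mathscr{E}_i)$, and the final identity $\tfrac{1}{2}(g_i^+ + g_i^-) = a\,g_i^+ g_i^-$ reshapes this into the stated form of \eqref{eq:expansionGFdiagonal}.

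The proposition is a purely algebraic identity, with no analytic or probabilistic content; the quantitative control of the individual error terms is deferred to later sections. The principal difficulty is thus the bookkeeping: one must verify that the six correction quantities $F_1$, $F_2$, $F_3^{\pm}$, $\psi^{(\mathbb{T})}$, and $M_{11} - m_N^{(\mathbb{T})}$ organize precisely into the factorized forms demanded by the definitions of $\mathscr{E}_i$ and $\mathscr{K}_i$. Each $(1 + F_3^{\pm})$ factor, in particular, must absorb exactly the combination $\pm\sqrt{t}W_{ii} \mp tM_{12} - tF_2 + t\psi^{(\mathbb{T})}$ produced after dividing by $(a \pm V_i)$, which is the defining condition for $F_3^{\pm}$.
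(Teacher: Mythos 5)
Your algebra is correct and follows exactly the paper's route: complete the square on the product $(-z-tM_{11})(-z-tM_{22})$, which produces the $F_1$ term; subtract the $(V_i+\sqrt{t}W_{ii}-tM_{12})^2$ term and apply a difference of squares; and recognize each linear factor as $(a\pm V_i)(1+F_3^{\pm})$, which is precisely the content of the definition of $F_3^{\pm}$. The auxiliary identities $g_i^+g_i^- = (a^2-V_i^2)^{-1}$ and $\tfrac12(g_i^++g_i^-) = a\,g_i^+g_i^-$ then collapse everything into a ratio of two $(1+\text{error})$ factors. You have also, implicitly, fixed the malformed bracket in the paper's definition of $\mathscr{K}_i$: the numerator identity $a(1+\mathscr{K}_i) = -z-tM_{11}$ only closes if $\mathscr{K}_i = \bigl(-t[(M_{11}-m_N^{(\mathbb{T})}) - \psi^{(\mathbb{T})}]\bigr)/(-z-tm_N)$, i.e. with $t$ multiplying the entire bracket rather than $-\psi^{(\mathbb{T})}$ sitting outside it.

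There is, however, one thing you gloss over and should flag rather than absorb into ``reshapes into the stated form.'' Your (correct) computation yields
\begin{equation*}
G_{kk} \ = \ \frac{a\,(1+\mathscr{K}_i)}{(1+\mathscr{E}_i)/(g_i^+g_i^-)} \ = \ \frac12\bigl(g_i^+ + g_i^-\bigr)\,\frac{1+\mathscr{K}_i}{1+\mathscr{E}_i},
\end{equation*}
with $\mathscr{K}_i$ in the numerator and $\mathscr{E}_i$ in the denominator. The proposition as printed has the fraction upside-down, namely $(1+\mathscr{E}_i)/(1+\mathscr{K}_i)$. This is a typo in the paper (the paper's own displayed chain of equalities also leads to the inverted form, and the paper even momentarily drops the numerator $-z-tM_{11}$ in one line). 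The error is harmless downstream because the a priori bound \eqref{eq:errortermestimate} only asserts the ratio equals $1+O(\sqrt{N^\e/(N\eta)})$, which is insensitive to which factor is on top once both $\mathscr{E}_i$ and $\mathscr{K}_i$ are small. But a blind reader checking your proof against the statement would see a contradiction, so you should say explicitly that the printed statement has $\mathscr{E}_i$ and $\mathscr{K}_i$ transposed (or the fraction inverted) and that your derivation corrects it.
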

\begin{proof}
We focus on the case $k = M+1$ for simplicity; the case for other indices follows from a permutation of the rows and entries of $X(t)$. We first note the following identity controlling the first denominator term in \eqref{eq:repGFdiagonal}:
\begin{align}
(-z - tM_{11})(-z - tM_{22}) \ = \ \left[ - z - \frac{t}{2}(M_{11} + M_{22}) \right]^2 - F_1.
\end{align}
We rewrite the representation \eqref{eq:repGFdiagonal} using the above identity and expand the denominator as follows:
\begin{align}
G_{kk} \ &= \ \frac{-z - t M_{11}}{\left(-z - \frac{t}{2}(M_{11} + M_{22}) \right)^2 - \left(V_1 + \sqrt{t} W_{11} - t M_{12} \right)^2 - F_1} \\
&= \ \frac{1}{\left(-z - tm_N^{(\mathbb{T})} + V_1 + \sqrt{t} W_{11} - t M_{12} - t F_2 \right) \left(-z - t m_N^{(\mathbb{T})} - V_1 - \sqrt{t} W_{11} + tM_{12} - tF_2 \right) - F_1}.
\end{align}
We may rewrite the above equation as follows introducing the term $\psi^{(\mathbb{T})}$ and the error terms $F_3^{\pm}$:
\begin{align}
G_{kk} \ = \ \frac{-z - t M_{11}}{\left(-z - t m_N + V_1 \right)(1 + F_3^+) \left(-z - t m_N - V_1 \right)(1 + F_3^-) - F_1}.
\end{align}
Proposition \ref{prop:expansionGFdiagonal} now follows from a straightforward calculation and the definition of $\mathscr{E}_i, \mathscr{K}_i$.
\end{proof}
If without the error terms $(1 + \mathscr{E}_i)(1 + \mathscr{K}_i)^{-1}$ in the refolded expansion \eqref{eq:expansionGFdiagonal}, we would be able to average over indices $k$ and obtain the same fixed-point equation for the free convolution Stieltjes transform $m_{\on{fc},t}$. To deal with the error terms, we appeal to an a priori estimate which we will derive under suitable conditions. With this control on the error terms, we derive a perturbed form of the fixed-point equation for $m_{\on{fc},t}$. 
\begin{ap}
The following uniform estimate holds with with $(\xi, \nu)$-high probability:
\begin{align}
\frac{1 + \mathscr{E}_i}{1 + \mathscr{K}_i} \ = \ 1 + O \left( \sqrt{\frac{N^{\e}}{N \eta}} \right). \label{eq:errortermestimate}
\end{align}
for $z = E + i \eta \in \mathbf{D}_{L,q}$ and for some constant $C$ and any $\e > 0$.
\end{ap}
With this a priori estimate on the error terms, we deduce the following perturbed self-consistent equation for $m_N$.
\begin{corollary} \label{corollary:sce}
Assuming the upper bound \eqref{eq:errortermestimate}, we have the following equation for all $z \in \mathbf{D}_{L,q}$ and $t \in \mathscr{T}_{\omega}$:
\begin{align}
m_N(z; t) \ = \ \frac{1}{2N} \sum_{i = 1}^N \ \left( \frac{1}{V_i - z - t m_N(z; t)} + \frac{1}{-V_ i - z - t m_N(z; t)} \right) \ + \ O \left( \frac{N^{\e}}{N \eta} \right). \label{eq:sce}
\end{align}
Here, $\e > 0$ is an arbitrarily small, $N$-independent constant.
\end{corollary}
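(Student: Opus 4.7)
The plan is to average the representation \eqref{eq:expansionGFdiagonal} from Proposition \ref{prop:expansionGFdiagonal} over the diagonal indices of the ``bottom'' block of $X(t)$ and to extract the leading term, which is already in precisely the form required by the RHS of \eqref{eq:sce}. First I would establish the trace-type identity
$$
m_N(z; t) \ = \ \frac{1}{N}\sum_{k=M+1}^{M+N} G_{kk}(z; t),
$$
which follows from the SVD $H(t) = U \Sigma V^T$: the eigenvectors of $X(t)$ associated to eigenvalues $\pm \sigma_j$ have the form $\frac{1}{\sqrt{2}}(u_j,\pm v_j)$ with $\sum_i |v_j(i)|^2 = 1$, while (in the $M > N$ regime) the kernel eigenvectors of $X(t)$ live entirely in the top $M$ coordinates and contribute nothing to $G_{M+i,M+i}$. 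Summing $G_{M+i,M+i}$ over $i = 1,\ldots,N$ and exchanging the order of summation then produces $N m_N(z;t)$.

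Next I would plug Proposition \ref{prop:expansionGFdiagonal} into this trace identity to get
$$
m_N(z;t) \ = \ \frac{1}{2N}\sum_{i=1}^{N}(g_i^+ + g_i^-) \ + \ \frac{1}{2N}\sum_{i=1}^{N}(g_i^+ + g_i^-)\left( \frac{1+\mathscr{E}_i}{1+\mathscr{K}_i} - 1\right).
$$
The first sum is, by the definition of $g_i^\pm$, exactly the leading term on the RHS of \eqref{eq:sce}. For the error, I would combine the a priori estimate \eqref{eq:errortermestimate} with the uniform bound $|g_i^\pm| \leq C$ on $\mathbf{D}_{L,q}$, which follows from $(g,G)$-regularity of $V$ together with the a priori control of $\on{Im} m_N$ that underlies \eqref{eq:errortermestimate} (so that $\on{Im}(\pm V_i - z - t m_N)$ is bounded away from $0$).

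To sharpen the resulting pointwise bound $O(\sqrt{N^\e/(N\eta)})$ to the stated $O(N^\e/(N\eta))$ one must exhibit cancellation in the $i$-sum rather than a term-by-term bound. This is where I expect the real work to be: one isolates the deterministic (mean) part of $(1+\mathscr{E}_i)(1+\mathscr{K}_i)^{-1} - 1$ from its fluctuating part, and uses that the fluctuating part for index $i$ is built out of the $i$-th row and column of $W$ only, and hence is essentially independent across $i$ once the Green's function $G^{(\mathbb{T})}$ is conditioned on; a standard martingale/concentration argument on the resulting quadratic forms then supplies the additional $(N\eta)^{-1/2}$ gain over the pointwise a priori bound, upgrading the square-root exponent to the integer exponent in \eqref{eq:sce}. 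The main obstacle is precisely this averaged bound: everything else is essentially algebra from Proposition \ref{prop:expansionGFdiagonal} and the trace identity above.
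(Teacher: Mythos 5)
Your overall structure — establish the trace identity $m_N(z;t) = \tfrac{1}{N}\sum_{k>M} G_{kk}$, average the representation from Proposition \ref{prop:expansionGFdiagonal}, and treat the error multiplicatively — matches the paper's argument, which simply ``averages the identity \eqref{eq:expansionGFdiagonal} over indices $k$.'' Two things, however, are off.

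First, your claim that $|g_i^\pm| \leq C$ uniformly over $\mathbf{D}_{L,q}$ is incorrect, and the paper does not use it. One has $\operatorname{Im}(\pm V_i - z - tm_N) = -\eta - t\operatorname{Im}m_N$, and on $\mathbf{D}_1$ the parameter $\eta$ can be as small as $N^{-1}\varphi^L$ while $t \in \mathscr{T}_\omega$ can be as small as $\ell N^\omega$; for $V_i$ near $\operatorname{Re}(z)$ the individual $|g_i^\pm|$ can therefore be very large. The paper instead invokes Lemma \ref{lemma:stabilityestimate}, which asserts only that the \emph{averaged} quantity $\tfrac{1}{2N}\sum_i\bigl(|g_i^+| + |g_i^-|\bigr)$ is $O(\log N)$. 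That $\log N$ is then absorbed into $N^\e$ after redefining $\e$. This averaged bound is exactly what you need and a pointwise $O(1)$ bound is false.

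Second, and more interestingly, you noticed that the naive term-by-term estimate only yields an error of size $O\bigl(\sqrt{N^\e/(N\eta)}\bigr)$ rather than $O(N^\e/(N\eta))$ and proposed a fluctuation-averaging step to bridge the gap. But the paper's own derivation stops at $O\bigl(\sqrt{N^\e/(N\eta)}\bigr)$, and that is the version of \eqref{eq:sce} actually fed into Lemma \ref{lemma:backbone}: there the perturbed self-consistent equation is written with an explicit $O\bigl(\sqrt{N^\e/(N\eta)}\bigr)$ remainder, ``appealing to the a priori bound \eqref{eq:errortermestimate}.'' The stated error $O(N^\e/(N\eta))$ in Corollary \ref{corollary:sce} appears to be a misprint — the exponent should be $1/2$. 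Fluctuation averaging is done later, in Section 3.4 via Proposition \ref{prop:fluctuationestimatestronglocallawdeformedmatrices}, to upgrade the weak local law $\Lambda(z) = O\bigl(\sqrt{N^\e/(N\eta)}\bigr)$ to the strong local law $O(\varphi^{C_1\xi}/(N\eta))$; it is not needed at the level of the self-consistent equation, and importing it here would duplicate effort without simplifying the rest of the argument. Your instinct that such an upgrade requires exhibiting cancellation across the index $i$ is sound, but at this stage of the proof it is neither required nor (by the paper's route) attempted.
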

Indeed, upon averaging the identity \eqref{eq:expansionGFdiagonal} over indices $k$ and using the a priori bound \eqref{eq:errortermestimate}, we deduce
\begin{align}
m_N(z; t) \ = \ \frac{1}{2N} \sum_{i = 1}^N \ \left( \frac{1}{V_i - z - t m_N(z; t)} + \frac{1}{-V_i - z - t m_N(z; t)} \right) \left(1 + O \left( \sqrt{\frac{N^{\e}}{N \eta}} \right) \right).
\end{align}
We may thus deduce \eqref{eq:sce} from the following short-time stability estimate particular to the matrix-valued Brownian motion $X(t)$. The proof of this result mirrors the arguments of Lemma 7.5 in \cite{LY} without introducing technical difficulties, so we omit it.
\begin{lemma} \label{lemma:stabilityestimate}
Uniformly over $z \in \mathbf{D}_{L,q}$, we have
\begin{align}
\frac{1}{2N} \sum_{i = 1}^{N} \ \left( \frac{1}{\left| V_i - z - t m_N(z;t) \right|} + \frac{1}{\left| -V_i - z - t m_N(z; t) \right|} \right) \ = \ O(\log N).
\end{align}
\end{lemma}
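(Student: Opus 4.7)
The plan is to reduce the average to a Stieltjes-type integral against the empirical measure $\mu_V := N^{-1} \sum_i \delta_{V_i}$, dyadically decompose around the real part of $\zeta := z + t m_N(z;t) = a + ib$, and exploit the $(g,G)$-regularity of $V$. The two summands are handled identically (replacing $V_i$ by $-V_i$ simply reflects $\mu_V$), so I focus on $\frac{1}{N}\sum_i |V_i - \zeta|^{-1}$. Since $\on{Im} m_N(z;t) \geq 0$ on $\C_+$, we have $b \geq \eta \geq N^{-1}\varphi^L$, while $\|V\|_\infty \leq N^{B_V}$ confines $\mu_V$ to $[-N^{B_V}, N^{B_V}]$. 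Consequently, only $K = O(\log N)$ dyadic scales contribute.

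Writing $I_0 = [a-b, a+b]$ and $I_k = \{x : 2^{k-1}b < |x-a| \leq 2^k b\}$ and using the elementary estimate $\mu_V([a-r, a+r]) \leq 2r \on{Im} m_V(a + ir)$ (immediate from $\on{Im} m_V(a + ir) \geq (2rN)^{-1}|\{i : |V_i - a| \leq r\}|$), one obtains
$$
\frac{1}{N}\sum_{i=1}^N \frac{1}{|V_i - \zeta|} = \int_\R \frac{d\mu_V(x)}{\sqrt{(x-a)^2 + b^2}} \leq 2 \on{Im} m_V(a + ib) + 4 \sum_{k=1}^K \on{Im} m_V(a + i \cdot 2^k b).
$$
Each $\on{Im} m_V(a + ir)$ is then bounded by an $N$-independent constant across three regimes: for $r \in [g, 10]$, $(g,G)$-regularity at $E_0$ yields $\on{Im} m_V(a + ir) \leq C_V$ provided $a \in \mathscr{I}_{E_0, G}$; for $r > 10$, the trivial bound $\on{Im} m_V(a + ir) \leq 1/r$ holds; for $r < g$, I bound the corresponding subsum directly by using $\mu_V([a-r, a+r]) \leq \mu_V([a-g, a+g]) \leq 2gC_V$, producing at most $O(g/b)$. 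Summing the $O(\log N)$ contributions produces the claim.

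Verifying the spatial constraint $a \in \mathscr{I}_{E_0, G}$ is a routine check: writing $a = E + t \on{Re} m_N(z;t)$ with $E \in \mathscr{I}_{E_0, qG}$, $q < 1$, and $t \leq G N^{-\omega}$, a crude a priori polynomial bound on $|\on{Re} m_N|$ combined with $\omega > 0$ yields $|a - E| \leq (1 - q)G/2$ after a short bootstrap.

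The main obstacle lies at the sub-regular scales $r < g$, where the naive decomposition costs a factor $g/b$. To recover an $O(1)$ contribution, one needs $b \gtrsim g$, which, since $b \geq t \on{Im} m_N$ and $t \geq \ell N^\omega$ by hypothesis, reduces to a preliminary lower bound $\on{Im} m_N \gtrsim 1$ in the bulk. This is obtained via a bootstrap using a weaker form of the self-consistent equation from Corollary \ref{corollary:sce}, bootstrapping the error term in \eqref{eq:errortermestimate}. The bootstrap, rather than the dyadic decomposition above, is the principal technical point, and it mirrors the argument of \cite{LY}, Lemma 7.5 without introducing new difficulties.
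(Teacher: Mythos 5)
Your dyadic decomposition is the right mechanism and is essentially the argument of Lemma~7.5 in \cite{LY} that the paper invokes. You correctly isolate the crux: the sub-regular scales $r<g$ give a contribution $O(g/b)$ that is controlled only once one knows $b := \on{Im}(z+tm_N) \gtrsim g$, and you correctly reduce this to a lower bound $\on{Im} m_N \gtrsim 1$ together with $t \geq \ell N^{\omega}$ (with $\ell \asymp g$). So the skeleton of the proof is sound.

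The place where you should be more careful is your sourcing of the lower bound on $\on{Im} m_N$ from ``a weaker form of the self-consistent equation from Corollary~\ref{corollary:sce}.'' As the paper's derivation makes explicit, Corollary~\ref{corollary:sce} is obtained \emph{using} Lemma~\ref{lemma:stabilityestimate}, so this reference as written is circular. The standard way to break the circle is to observe that the deterministic bound $\on{Im}\, m_{\on{fc},t}(z) \asymp 1$ holds unconditionally on $\mathbf{D}_{L,q}$ (this is the content of (7.10), (7.26) in \cite{LY}, which the paper already cites elsewhere), and that the passage from $m_{\on{fc},t}$ to $m_N$ happens inside the iterative bootstrap of Proposition~\ref{prop:weaklocallaw}: at scale $\eta_k = 1 - kN^{-4}$ one has the weak local law at $\eta_{k-1}$, and $N^2$-Lipschitz continuity of both $m_N$ and $m_{\on{fc},t}$ then gives $\on{Im}\, m_N(E+i\eta_k) \geq c - O(N^{-2})$. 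This supplies $b \geq ct/2 \geq c\,\ell N^{\omega}/2 \gg g$ at scale $\eta_k$ \emph{before} either Lemma~\ref{lemma:stabilityestimate} or Corollary~\ref{corollary:sce} is proved at that scale. You should state this explicitly rather than pointing to \eqref{eq:sce}.

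Two smaller points deserve a line each. First, your claim that the $-V_i$ summand ``simply reflects $\mu_V$'' and is ``handled identically'' glosses over the fact that the $(g,G)$-regularity hypothesis controls only $\on{Im}\, m_V$ near $E_0$, not near $-E_0$; what saves you is that the $V_i$ are singular values (hence nonnegative), so for $a \approx E_0$ bounded away from zero, $|-V_i - \zeta| \geq a$ trivially, and for $E_0$ near zero the window $\mathscr{I}_{E_0,G}$ is itself symmetric about the origin. Second, your verification that $a = \on{Re}(z+tm_N) \in \mathscr{I}_{E_0,G}$ via ``a crude a priori polynomial bound on $|\on{Re}\, m_N|$'' is too weak as stated: unconditionally $|\on{Re}\, m_N| \leq 1/\eta$ can be polynomially large, so one again needs $|\on{Re}\, m_N| \lesssim \log N$, which follows from $\on{Im}\, m_N \lesssim 1$ and a dyadic argument (or equivalently from the bootstrap inductive hypothesis). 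Neither point is fatal, but both belong inside the same inductive step as the lower bound on $b$, and it would strengthen your write-up to phrase the whole lemma as a component of that induction rather than as a stand-alone claim.
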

Using Lemma \ref{lemma:stabilityestimate}, we deduce the following elementary bound on the big-Oh term upon possibly redefining $\e > 0$:
\begin{align}
\left| \frac{1}{2N} \sum_{i = 1}^{N} \ \frac{O \left( \sqrt{\frac{N^{\e}}{N \eta}} \right)}{\pm V_i - z - t m_N(z;t)} \right| \ &\leq \ O \left( \sqrt{\frac{N^\e}{N \eta}} \log N \right) \ \leq \ O \left( \sqrt{\frac{N^\e}{N \eta}} \right).
\end{align}
The signs on the initial data terms $V_i$ indicate a summation over signs as well. This completes the derivation of the self-consistent equation \eqref{eq:sce}.

We now introduce the following a priori estimate for short times $t \in \mathscr{T}_{\omega}$.
\begin{ap}
For any $t \in \mathscr{T}_{\omega}$, we have the following lower bound for any small $\delta > 0$:
\begin{align}
\left| 1 - \frac{t}{2N} \sum_{i = 1}^{N} \ \frac{1}{(\pm V_i - z - tm_N(z;t))(\pm V_i - z - tm_{\on{fc},t}(z))} \right| \ \geq \ N^{-\delta}. \label{eq:secondstabilityestimate}
\end{align}
Here, the signs $\pm$ are chosen to be the same; in particular, only two possible pairs of signs are allowed.
\end{ap}
We now discuss in more detail how to derive the a priori bounds \eqref{eq:errortermestimate} and \eqref{eq:secondstabilityestimate}. To this end we define the difference term of primary interest:
\begin{align}
\Lambda(z) \ := \ \left| m_N(z; t) - m_{\on{fc},t}(z) \right|.
\end{align}
The following result will also allow us to begin our bootstrapping scheme at the scale $\eta = 1$.
\begin{prop} \label{prop:deriveapbounds}
For any fixed $z \in \mathbf{D}_{L,q}$, suppose either $\eta \geq 1$ or the following upper bound holds:
\begin{align}
\Lambda(z) \ = \ O \left( \sqrt{\frac{N^{\e}}{N \eta}} \right).
\end{align}
Then with $(\xi, \nu)$-high probability, the bound \eqref{eq:errortermestimate} holds uniformly over all indices $k$, and the bound \eqref{eq:secondstabilityestimate} also holds.
\end{prop}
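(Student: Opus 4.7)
The plan is to split the argument into three pieces: a large-deviation estimate for the Gaussian quadratic forms $M_{11}, M_{22}, M_{12}$; a standard perturbation bound for the partial Stieltjes transform $\psi^{(\mathbb{T})}$; and a differentiation-plus-perturbation argument for the stability denominator in \eqref{eq:secondstabilityestimate}. The dichotomy in the hypothesis serves the same purpose in both cases, namely to provide an a priori entrywise bound on $G^{(\mathbb{T})}$: for $\eta \geq 1$ this is immediate from $\|G^{(\mathbb{T})}\|_{\on{op}} \leq 1/\eta$, while under the bootstrap assumption $\Lambda(z) = O(\sqrt{N^\e/(N\eta)})$ it follows by inserting $m_{\on{fc},t}$ for $m_N$ in the refolded representation \eqref{eq:expansionGFdiagonal} applied to $G^{(\mathbb{T})}$ and using the $(g,G)$-regularity of $V$ to bound the prefactors $g_i^\pm$.

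\textbf{Error-term bound \eqref{eq:errortermestimate}.} Conditional on $X^{(\mathbb{T})}$, the vector $v$ has independent $N(0, 1/N)$ entries and is independent of $G^{(\mathbb{T})}$. A Hanson--Wright type concentration estimate with $(\xi,\nu)$-high-probability thresholds then gives
\begin{align*}
\max \! \Big( \big| M_{11} - \tfrac{1}{N} \on{Tr}_{1} G^{(\mathbb{T})} \big|, \ \big| M_{22} - \tfrac{1}{N} \on{Tr}_{2} G^{(\mathbb{T})} \big|, \ |M_{12}| \Big) \ \lesssim \ \sqrt{\frac{\varphi^{C\xi}\, \on{Im}\, m_N^{(\mathbb{T})}}{N \eta}},
\end{align*}
where $\on{Tr}_j$ denotes the partial trace over the $j$-th diagonal block and I use the Ward identity $\on{Tr} |G^{(\mathbb{T})}|^2 = (\on{Im}\, m_N^{(\mathbb{T})})/\eta$. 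The symmetric block structure of $X(t)$ forces $\tfrac{1}{N}\on{Tr}_j G^{(\mathbb{T})} = m_N^{(\mathbb{T})} + O(1/N)$ for $j = 1,2$, while Cauchy interlacing for the singular values of $X(t)$ yields $|\psi^{(\mathbb{T})}| \leq C/(N\eta)$. Substituting these into the definitions of $F_1, F_2, F_3^\pm$ produces the expected cancellations of the leading $m_N^{(\mathbb{T})}$ pieces, after which the uniform bound $|g_i^\pm| = O(1)$ (provided by $(g,G)$-regularity) converts the residuals into $|\mathscr{E}_i|, |\mathscr{K}_i| = O(\sqrt{N^\e/(N\eta)})$ uniformly in $i$, which is exactly \eqref{eq:errortermestimate}.

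\textbf{Stability estimate and main obstacle.} For \eqref{eq:secondstabilityestimate}, I would differentiate the fixed-point equation defining $m_{\on{fc},t}$ in $z$ to obtain the clean identity
\begin{align*}
1 - \frac{t}{2N} \sum_{i=1}^{N} \left( \frac{1}{(V_i - z - t m_{\on{fc},t})^{2}} + \frac{1}{(-V_i - z - t m_{\on{fc},t})^{2}} \right) \ = \ \frac{S(z)}{m_{\on{fc},t}'(z)},
\end{align*}
where $S(z)$ denotes the normalized sum on the left-hand side itself. The $(g,G)$-regularity of $V$, combined with the short-time condition $t \in \mathscr{T}_\omega$, yields a uniform upper bound on $|m_{\on{fc},t}'(z)|$ and a uniform lower bound on $|S(z)|$ on $\mathbf{D}_{L,q}$, and hence a constant-order lower bound on the ratio. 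Replacing one factor of $m_{\on{fc},t}$ in each denominator by $m_N$ changes the sum by at most $C t\, \Lambda(z)\, \log N$ via a telescoping identity and Lemma \ref{lemma:stabilityestimate}, and this perturbation is absorbed into the $N^{-\delta}$ tolerance under either hypothesis. The main obstacle, in my view, is rigorously propagating the $(g,G)$-regularity of $V$ at the single energy $E_0$ through the free convolution flow to yield the claimed uniform bounds on $|m_{\on{fc},t}'|$ and $|S|$ throughout $\mathbf{D}_{L,q} \times \mathscr{T}_\omega$: a quantitative stability property of $\varrho_{\on{fc},t}$ analogous to the treatment in \cite{LY} for the Wigner flow, but with additional bookkeeping for the symmetric spectrum of $X(t)$ and the reciprocal terms coming from the $\pm V_i$ structure.
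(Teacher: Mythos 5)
Your two halves track the paper's structure, but with different levels of reliance on external citations, so let me address each.

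For the error-term bound \eqref{eq:errortermestimate}, the paper does not give an argument at all — it simply cites Lemma 7.9 of \cite{LY}. Your filled-in sketch (Hanson--Wright concentration for the Gaussian quadratic forms $M_{ij}$ conditional on $X^{(\mathbb{T})}$, Ward identity to convert $\sum_j |G^{(\mathbb{T})}_{ij}|^2$ into $\on{Im} m_N^{(\mathbb{T})}/\eta$, Cauchy interlacing for $\psi^{(\mathbb{T})}$, and $(g,G)$-regularity to bound the prefactors $g_i^\pm$) is precisely the content of that cited lemma, so you are on the same route and filling in what the paper omits. Two small cautions: your displayed Ward identity drops the dimensional normalization (it should carry a factor of order $N$ or $M+N$ depending on how you normalize the trace), and invoking ``the refolded representation \eqref{eq:expansionGFdiagonal} applied to $G^{(\mathbb{T})}$'' to get a priori entrywise bounds on $G^{(\mathbb{T})}$ risks circularity since that representation is what you are in the middle of controlling; the cleaner route is to pass from $m_N$ to $m_N^{(\mathbb{T})}$ by interlacing and then to $m_{\on{fc},t}$ by the bootstrap hypothesis, and deduce entrywise bounds from regularity of $V$ directly.

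For the stability estimate \eqref{eq:secondstabilityestimate}, your route is genuinely different from what the paper writes down. The paper splits the sum into a pure-$m_{\on{fc},t}$ part plus a perturbation proportional to $m_N - m_{\on{fc},t}$, bounds the perturbation via $t \lesssim |V_i - z - t m_{\on{fc},t}| \lesssim 1$ and Lemma \ref{lemma:stabilityestimate}, and cites (7.10)/(7.26) of \cite{LY} for the lower bound on the pure-$m_{\on{fc},t}$ part. You instead differentiate the fixed-point equation to get the identity $1 - tS = S/m_{\on{fc},t}'$ and aim to bound the ratio directly. The identity is correct, and this is a nice self-contained alternative to the citation. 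However, two of your claimed ingredients deserve more care. First, the perturbative step has a bookkeeping slip: replacing $m_{\on{fc},t}$ by $m_N$ in one factor produces a telescoping term that, after using $|V_i - z - t m_{\on{fc},t}|^2 \gtrsim t^2$ and Lemma \ref{lemma:stabilityestimate}, is of size $O(\Lambda \log N)$, not $O(t\,\Lambda\log N)$ — the factor $t^2$ in the numerator cancels exactly against the $t^{-2}$ from the lower bound, with no leftover $t$. The conclusion still holds since $\Lambda\log N = o(1)$ on $\mathbf{D}_{L,q}$, but the stated bound is incorrect. Second, your ``uniform lower bound on $|S(z)|$'' is not quite the right way to close the argument: $S$ has complex summands and can be small due to cancellation, and if $|S|$ is small the identity-based bound degenerates. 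The fix is a two-case argument: if $t|S| \leq 1/2$ then $|1 - tS| \geq 1/2$ trivially, and if $t|S| > 1/2$ then the identity together with an $O(1)$ upper bound on $|m_{\on{fc},t}'|$ (which requires $\varrho_{\on{fc},t}$ to be $C^1$ on the interior of its support, a regularity fact in its own right) gives $|1-tS| = |S|/|m'_{\on{fc},t}| \gtrsim 1/t \gtrsim 1$. With this repair your route works, and you have correctly identified that the real obstacle in either route — the paper's or yours — is the propagation of $(g,G)$-regularity through the free-convolution flow to produce the required uniform estimates on $\mathbf{D}_{L,q} \times \mathscr{T}_\omega$.
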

\begin{proof}
For the proof of deducing \eqref{eq:errortermestimate}, we refer to the proof of Lemma 7.9 in \cite{LY} which again mirrors the proof of \eqref{eq:errortermestimate}. To derive the bound \eqref{eq:secondstabilityestimate}, we first note it suffices to provide a $O(N^{-\delta})$ upper bound for the following term:
\begin{align}
\frac{t}{2N} \sum_{i = 1}^N \ \frac{1}{(\pm V_i - z - tm_N(z;t))(\pm V_i - z - t m_{\on{fc},t}(z))}.
\end{align}
To derive this upper bound, we begin by rewriting each term in the summation as follows:
\begin{align}
\frac{1}{(V_i - z - tm_N(z;t))(V_i - z - tm_{\on{fc},t}(z))} \ &= \ \frac{1}{(V_i - z - tm_{\on{fc},t}(z))^2} \nonumber \\
&\quad \ + \ \frac{t(m_N(z;t) - m_{\on{fc},t}(z))}{(V_i - z - tm_N(z;t))(V_i - z - tm_{\on{fc},t}(z))^2}.
\end{align}
We handle the first term on the RHS as follows. We cite (7.10) in Lemma 7.2 and (7.26) in Lemma 7.3 in \cite{LY} to obtain the following $O(1)$-bound:
\begin{align}
1 - \frac{t}{2N} \sum_{i = 1}^{N} \ \frac{1}{(\pm V_i - z - tm_{\on{fc},t}(z))^2} \ = \ O(1) \label{eq:O(1)bound}
\end{align}
It now remains to bound the following term:
\begin{align}
\frac{t^2(m_N(z;t) - m_{\on{fc},t}(z))}{2N} \sum_{i = 1}^{N} \ \frac{1}{(\pm V_i - z - t m_N(z;t))(\pm V_i - z - t m_{\on{fc},t}(z))^2}.
\end{align}
This follows from the Schwarz inequality and the following estimate uniform all indices $i \in [[1, N]]$:
\begin{align}
t \ \lesssim \ |V_i - z - Tm_{\on{fc},t}(z)| \ \lesssim \ 1,
\end{align}
which, in turn, follow from the estimate $\on{Im} m_{\on{fc},t}(z) \asymp 1$. These estimates hold uniformly over $z \in \mathbf{D}_{L,q}$ and $t \in \mathscr{T}_{\omega}$, which completes the proof of Proposition \ref{prop:deriveapbounds}.
\end{proof}
\subsection{Analysis of the self-consistent equation}
We begin our analysis of the self-consistent equation by establishing the following identity which will be the backbone of our bootstrapping method to obtain local laws at smaller scales $\eta$.
\begin{lemma} \label{lemma:backbone}
Suppose $z \in \mathbf{D}_{L,q}$ and $t \in \mathscr{T}_{\omega}$. The following estimates hold with $(\xi, \nu)$-high probability. Assuming the a priori bound \eqref{eq:errortermestimate}, we have
\begin{align}
\left( m_N(z;t) - m_{\on{fc},t}(z) \right) \left(1 - \frac{t}{2N} \sum_{i = 1}^{N} \ \frac{1}{(\pm V_i - z - t m_N(z;t))(V_i - z - tm_{\on{fc},t}(z))} \right) \ = \ O \left( \sqrt{\frac{N^{\e}}{N \eta}} \right). \label{eq:stabilityequation}
\end{align}
In particular, assuming the bound \eqref{eq:secondstabilityestimate} in addition, we have, for a possibly different $\e > 0$,
\begin{align}
\Lambda(z) \ := \ O \left( \sqrt{\frac{N^{\e}}{N \eta}} \right), \label{eq:desiredlocallaw}
\end{align}
where the implied constant depends only on the equation \eqref{eq:stabilityequation}.
\end{lemma}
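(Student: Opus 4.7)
The plan is to compare the perturbed self-consistent equation for $m_N(z;t)$ from Corollary \ref{corollary:sce} directly against the fixed-point equation defining $m_{\on{fc},t}(z)$, and to extract a linear equation for the difference $m_N - m_{\on{fc},t}$ whose coefficient is precisely the quantity controlled by \eqref{eq:secondstabilityestimate}.

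First I would subtract the defining fixed-point equation for $m_{\on{fc},t}(z)$ from \eqref{eq:sce}. Applying the elementary identity $A^{-1} - B^{-1} = (B-A)(AB)^{-1}$ to each of the two sign-matched summands, the difference $\frac{1}{\pm V_i - z - t m_N} - \frac{1}{\pm V_i - z - t m_{\on{fc},t}}$ produces the factor $t(m_N(z;t) - m_{\on{fc},t}(z))$ in the numerator and the product $(\pm V_i - z - t m_N)(\pm V_i - z - t m_{\on{fc},t})$ in the denominator. Factoring out the common $(m_N - m_{\on{fc},t})$, transposing to one side, and retaining the $O(\sqrt{N^{\e}/N\eta})$ remainder from Corollary \ref{corollary:sce} on the right-hand side yields exactly \eqref{eq:stabilityequation}, with the $\pm$ in the denominator understood as a matched sum over the two sign choices.

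Second, assuming also \eqref{eq:secondstabilityestimate}, I would simply divide the identity \eqref{eq:stabilityequation} through by the coefficient $1 - \frac{t}{2N}\sum_i (\cdots)$, whose modulus is bounded below by $N^{-\delta}$. This gives $\Lambda(z) \leq N^{\delta} \cdot O(\sqrt{N^{\e}/N\eta})$, and the $N^{\delta}$ factor can be absorbed by renaming $\e$, which produces \eqref{eq:desiredlocallaw}.

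The manipulation is essentially algebraic and presents no serious obstacle in itself; all of the analytic content has been pushed into the two a priori hypotheses \eqref{eq:errortermestimate} and \eqref{eq:secondstabilityestimate}. The main conceptual point to verify is simply that the splitting by sign is compatible, i.e.\ that the two contributions coming from the $V_i$ and $-V_i$ terms factor through the same linear combination in $(m_N - m_{\on{fc},t})$ — which they do, because both the perturbed self-consistent equation and the free-convolution fixed-point equation have the same symmetric-in-sign structure. The resulting bootstrap identity is then of the same shape as the one used for the Wigner flow in \cite{LY}, and will feed into a standard continuity-in-$\eta$ argument to transport the control from $\eta = 1$ down to the optimal scale $\eta \geq N^{-1}\varphi^L$ asserted in Theorem \ref{theorem:stronglocallawstronglocallawdeformedmatrices}.
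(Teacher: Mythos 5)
Your proposal is correct and follows essentially the same route as the paper: subtract the free-convolution fixed-point equation from the perturbed self-consistent equation of Corollary \ref{corollary:sce}, apply the identity $A^{-1}-B^{-1}=(B-A)(AB)^{-1}$ summand-by-summand to pull out the factor $t(m_N-m_{\on{fc},t})$, factor, and then divide through using the lower bound \eqref{eq:secondstabilityestimate}, absorbing the $N^{\delta}$ loss by relabeling $\e$. Nothing of substance differs from the paper's argument.
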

\begin{remark}
The estimate \eqref{eq:desiredlocallaw} is known as a \emph{weak local law} in contrast to the strong local law in Theorem \ref{theorem:stronglocallawstronglocallawdeformedmatrices}. 
\end{remark}
\begin{proof}
We note it suffices to prove the first estimate, as the second estimate follows from the first and the lower bound in \eqref{eq:secondstabilityestimate}. We take the difference between the fixed-point equation for $m_{\on{fc},t}$ and \eqref{eq:sce} to obtain the following perturbed equation:
\begin{align}
m_N(z;t) - m_{\on{fc},t}(z) \ = \ \frac{1}{2N} \sum_{i = 1}^{N} \ \left( \frac{1}{\pm V_i - z - tm_N(z;t)} - \frac{1}{\pm V_i - z - t m_{\on{fc},t}(z)} \right) \ + \ O \left( \sqrt{\frac{N^{\e}}{N \eta}} \right). 
\end{align}
Here, we appealed to the a priori bound \eqref{eq:errortermestimate} to derive the above equation. We now rewrite each term in the summation on the RHS as follows:
\begin{align}
\frac{1}{\pm V_i - z - t m_N(z;t)} - \frac{1}{\pm V_i - z - t m_{\on{fc},t}(z)} \ &= \ \frac{t \left( m_N(z;t) - m_{\on{fc}, t}(z) \right)}{(\pm V_i - z - t m_N(z;t))(\pm V_i - z - t m_{\on{fc},t}(z))}. 
\end{align}
Having rewritten the summation term, we subtract it from both sides and obtain a $O \left(\sqrt{\frac{N^{\e}}{N\eta}} \right)$ bound on the following term:
\begin{align}
m_N(z; t) - m_{\on{fc},t}(z) - &\frac{t(m_N(z;t) - m_{\on{fc},t}(z))}{2N} \sum_{i = 1}^N \ \frac{1}{(\pm V_i - z - tm_N(z;t))(\pm V_i - z - tm_{\on{fc},t}(z))} \\
&= \ \left( m_N(z;t) - m_{\on{fc},t}(z) \right) \left( 1 - \frac{t}{2N} \sum_{i = 1}^N \ \frac{1}{(\pm V_i - z - tm_N(z;t))(\pm V_i - z - tm_{\on{fc},t}(z))} \right).
\end{align}
This completes the proof of Lemma \ref{lemma:backbone}.
\end{proof}
To improve the weak local law from the scale $\eta = 1$ to the optimal scale $\eta \gg N^{-1}$, we appeal to the following bootstrapping scheme. 
\begin{prop} \label{prop:weaklocallaw}
Assume the setting of Theorem \ref{theorem:stronglocallawstronglocallawdeformedmatrices}. Then for any $z \in \mathbf{D}_{L,q}$ and any $\e > 0$, we have
\begin{align}
\Lambda(z) \ = \ O \left( \sqrt{\frac{N^{\e}}{N \eta}} \right).
\end{align}
\end{prop}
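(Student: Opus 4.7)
The argument is a standard self-improving bootstrap in the imaginary coordinate $\eta$ built on the circular structure of Proposition \ref{prop:deriveapbounds} and Lemma \ref{lemma:backbone}. Proposition \ref{prop:deriveapbounds} promotes any weak bound of the form $\Lambda(z) = O(\sqrt{N^{\e}/(N\eta)})$ to the a priori estimates \eqref{eq:errortermestimate} and \eqref{eq:secondstabilityestimate}, while Lemma \ref{lemma:backbone} recovers a weak bound of this same shape once those a priori estimates are in hand. The plan is to begin where the first hypothesis of Proposition \ref{prop:deriveapbounds} holds for free, namely at $\eta \asymp 1$, and to transport the estimate down to the optimal scale via a continuity argument on a sufficiently fine grid in $\mathbf{D}_{L,q}$.

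For the base case, on the strip $\eta \in [1, 10]$ the hypothesis $\eta \geq 1$ of Proposition \ref{prop:deriveapbounds} is satisfied unconditionally, so the a priori bounds hold with $(\xi, \nu)$-high probability, and Lemma \ref{lemma:backbone} yields $\Lambda(z) \leq \sqrt{N^{\e}/(N\eta)}$ on this range. To push to smaller $\eta$, I fix $E \in \mathscr{I}_{E_0, qG}$ and introduce a geometric ladder $\eta_0 = 10 > \eta_1 > \cdots > \eta_K = N^{-1}\varphi^L$ with $\eta_{k+1} = (1 - N^{-3})\eta_k$, so that $K = O(N^3 \log N)$ is polynomial in $N$. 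Since $|\partial_\eta m_N(E + i\eta)|, |\partial_\eta m_{\on{fc},t}(E + i\eta)| \leq \eta^{-2}$, one has
\[ |\Lambda(E + i\eta_{k+1}) - \Lambda(E + i\eta_k)| \ \leq \ \frac{C}{N^3 \eta_{k+1}}. \]
A similar grid of resolution $N^{-3}$ is introduced in the energy coordinate $E$, producing a grid of polynomial cardinality $\mathscr{N}$ in $\mathbf{D}_{L,q}$.

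The inductive step is the heart of the argument. Assume on the $(\xi,\nu)$-high probability event supplied by the previous rung that $\Lambda(E + i\eta_k) \leq \sqrt{N^{\e}/(N\eta_k)}$. The Lipschitz estimate gives
\[ \Lambda(E + i\eta_{k+1}) \ \leq \ \sqrt{\frac{N^{\e}}{N\eta_k}} + \frac{C}{N^3 \eta_{k+1}} \ \leq \ \sqrt{\frac{N^{2\e}}{N\eta_{k+1}}} \]
for $N$ sufficiently large, which is still inside the hypothesis of Proposition \ref{prop:deriveapbounds} (with a slightly enlarged exponent). The a priori bounds \eqref{eq:errortermestimate} and \eqref{eq:secondstabilityestimate} therefore hold at $z = E + i\eta_{k+1}$, and a second application of Lemma \ref{lemma:backbone} refines this to $\Lambda(E + i\eta_{k+1}) \leq \sqrt{N^{\e'}/(N\eta_{k+1})}$ for some $\e'$ of the same order as $\e$. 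A union bound of size $\mathscr{N}$ over the grid converts the pointwise estimates into a uniform $(\xi,\nu)$-high probability estimate, and a final Lipschitz argument extends from the grid to all $z \in \mathbf{D}_{L,q}$.

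The main obstacle is the bookkeeping of the exponent $\e$ across the $K$ iterations: each invocation of Lemma \ref{lemma:backbone} introduces a small enlargement of $\e$, and the Lipschitz passage between grid points contributes an additional small increment, so both must be kept summable in order that after $K = \on{poly}(N)$ steps the effective exponent remains any prescribed $\e > 0$. Choosing the per-step increments to be $o(1/K)$ and reparametrising $\e$ at the outset controls this uniformly. The high probability survives because each step costs only a polynomial union bound against the $(\xi,\nu)$-high probability event, and the quantity $e^{-\nu \log^{\xi} N}$ absorbs polynomial losses once $\xi$ satisfies $\xi \log \xi \gg \log^2 N$ as in the statement of Theorem \ref{theorem:stronglocallawstronglocallawdeformedmatrices}.
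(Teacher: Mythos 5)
Your proof is correct and follows essentially the same bootstrap-in-$\eta$ strategy as the paper: start at $\eta \asymp 1$ where the hypothesis of Proposition \ref{prop:deriveapbounds} is automatic, then descend on a fine grid via Lipschitz continuity of the Stieltjes transforms, re-entering the circle Proposition \ref{prop:deriveapbounds} $\Rightarrow$ (\ref{eq:errortermestimate}), (\ref{eq:secondstabilityestimate}) $\Rightarrow$ Lemma \ref{lemma:backbone} $\Rightarrow$ weak local law at the next rung. The paper uses a linear grid $\eta_k = 1 - kN^{-4}$ and is terser about the union bound, whereas you use a geometric grid and spend extra care on the exponent bookkeeping; worth noting, though, that no degradation of $\e$ actually accumulates because Lemma \ref{lemma:backbone} re-derives the bound with a fixed implied constant at each rung, so the only role of the Lipschitz step is to verify the hypothesis of Proposition \ref{prop:deriveapbounds} (with some exponent), not to supply the final bound.
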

\begin{proof}
For a fixed $z \in \mathbf{D}_{L,q}$, we define $z_k = E + i \eta_k$. Here, we have defined the parameters as follows:
\begin{align}
E \ = \ \on{Re}(z), \quad \eta_k = 1 - kN^{-4}.
\end{align}
Lastly, we let $K$ denote the maximal integer such that $z_K \in \mathbf{D}_{L,q}$; in particular we have the bound $K = N^{O(1)}$. By $N^2$-Lipschitz continuity of the Stieltjes transforms, and thus of $\Lambda$, to derive the weak local law at $z$, it suffices to derive the weak local law at $z_k$ for all $k = 1, \ldots, K$. We do so inductively in $k$. 

We note the weak local law at $z_0$ holds with $(\xi, \nu)$-high probability by Proposition \ref{prop:deriveapbounds}. This allows us to obtain the following estimate for $z_1$:
\begin{align}
\Lambda(z_1) \ \leq \ \left| \Lambda(z_1) - \Lambda(z) \right| + \Lambda(z)  \ &\leq \ N^2 |z_1 - z| + O \left( \sqrt{\frac{N^{\e}}{N \eta}} \right) \\
&\leq \ N^{-4} + O \left( \sqrt{\frac{N^{\e}}{N \eta}} \right).
\end{align}
By Proposition \ref{prop:deriveapbounds} and Lemma \ref{lemma:backbone}, this implies 
\begin{align}
\Lambda(z_1) \ = \ O \left( \sqrt{\frac{N^{\e}}{N \eta_1}} \right).
\end{align}
Continuing inductively, we see for any $k \in [[1, K]]$ that the weak local law holds:
\begin{align}
\Lambda(z_k) \ = \ O \left( \sqrt{\frac{N^{\e}}{N \eta}} \right),
\end{align}
where the implied constant is independent of the index $k$. This completes the proof.
\end{proof}
With a similar bootstrapping method, we may also obtain an estimate of similar type for the off-diagonal entries of the Green's function $G$. To state this estimate, we first define the following pseudo-maximal functions for $z \in \mathbf{D}_{L,q}$:
\begin{align}
\Lambda_{o}(z) \ = \ \max_{i \neq j > M} \ |G_{ij}(z)|.
\end{align}
To control this term, we appeal to the following result from \cite{LS}, which allows us to control $\Lambda_o$ in terms of the weak local law. 
\begin{lemma} \label{lemma:offdiagonallocallaw}
Assume the following a priori estimate:
\begin{align}
\Lambda_o(z) + \Lambda(z) \ \leq \ \varphi^{-2 \xi}
\end{align}
where we retain the definition of the control parameters $\varphi, \xi$ from the introduction of the underlying model. Then we have
\begin{align} 
\sup_{z \in \mathbf{D}_{L,q}} \ \Lambda_o(z) \ = \ O \left( \varphi^{\xi} \sqrt{\frac{\Lambda(z) + O(1)}{N \eta}} \right)
\end{align}
with $(\xi, \nu)$-high probability, where the implied constant is independent of $z$. 
\end{lemma}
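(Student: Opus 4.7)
The plan is to follow the bootstrapping strategy of Lee--Schnelli \cite{LS} adapted to the block structure of the linearized covariance matrix. The first step is to obtain a Schur--complement representation for $G_{ij}$ with $i \neq j$, $i, j > M$, in the spirit of Lemma \ref{lemma:GFmatrixequation}: removing the block of rows and columns naturally paired with $i$ and $j$ (four total) expresses $G_{ij}$ as a product of diagonal Green's function factors, already controlled by Proposition \ref{prop:expansionGFdiagonal}, times a linear combination of a linear term in $W$ and centered bilinear forms in the entries of $W$ whose coefficients are Green's function entries of the minor $X^{(\mathbb{T})}(t)$.

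The second step is to bound these centered bilinear forms via standard large deviation estimates for quadratic forms in independent Gaussian variables (see e.g.\ the appendix of \cite{LY}). For a bilinear form $\sum_{k,\ell} W_{\cdot,k} A_{k\ell} W_{\cdot,\ell}$, the standard LDE yields a bound of order $\varphi^{\xi} \sqrt{N^{-2} \sum_{k,\ell} |A_{k\ell}|^2}$ with $(\xi,\nu)$--high probability, after subtracting the trace $N^{-1}\sum_k A_{kk}$. Applying the Ward identity $\sum_k |G^{(\mathbb{T})}_{ik}|^2 = \eta^{-1} \on{Im} G^{(\mathbb{T})}_{ii}$ block--by--block converts the Hilbert--Schmidt norms of the coefficient matrices into quantities involving $\eta^{-1} \on{Im} m_N^{(\mathbb{T})}(z;t)$.

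The third step is to pass from $m_N^{(\mathbb{T})}$ to $m_N$ and thence to $m_{\on{fc},t}$. The interlacing bound $|\psi^{(\mathbb{T})}| \lesssim (N\eta)^{-1}$ absorbs the minor, and the weak local law of Proposition \ref{prop:weaklocallaw} together with $\on{Im} m_{\on{fc},t}(z) \asymp 1$ on $\mathbf{D}_{L,q}$ yields $\on{Im} m_N(z;t) = \Lambda(z) + O(1)$. The diagonal Green's function factors themselves are $O(1)$ by Proposition \ref{prop:expansionGFdiagonal} together with the standing a priori assumption $\Lambda_o + \Lambda \leq \varphi^{-2\xi}$, which prevents the denominator in \eqref{eq:expansionGFdiagonal} from degenerating. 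Combining these ingredients gives $|G_{ij}(z)| = O\bigl(\varphi^{\xi} \sqrt{(\Lambda(z) + O(1))/(N\eta)}\bigr)$ at a fixed $z$; a union bound over the $O(N^2)$ pairs $(i,j)$ costs only a polynomial factor, which is absorbed into $\varphi^{\xi}$ after slightly enlarging $\nu$. Finally, the grid/Lipschitz bootstrap used in the proof of Proposition \ref{prop:weaklocallaw} propagates the pointwise bound to a supremum over $\mathbf{D}_{L,q}$.

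The main obstacle will be the careful bookkeeping of the block structure: since $W_\ell = (W, W^\ast)$ has two copies of $W$ appearing in distinct blocks, one must verify when expanding $G_{ij}$ with $i,j > M$ that the resulting bilinear forms are genuinely centered quadratic expressions in an honest family of independent Gaussians, and that no cross--block cancellation spoils the Hilbert--Schmidt estimate that feeds the Ward identity. This is the analogue for the linearization of the checks carried out in \cite{LS} for mean--field Hermitian matrices, and once these are in place the argument proceeds exactly as above.
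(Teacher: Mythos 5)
Your proposal reconstructs the standard Lee--Schnelli argument for off-diagonal Green's function estimates (Schur complement on the paired $2\times 2$ index block, large deviation bounds for the resulting centered bilinear forms, Ward identity to reduce the Hilbert--Schmidt norms to $\eta^{-1}\mathrm{Im}\,m_N^{(\mathbb{T})}$, then $m_N^{(\mathbb{T})} \to m_N \to m_{\mathrm{fc},t}$ with an $O(1)$ imaginary part), and this is precisely the route the paper takes: the paper provides no proof of its own but explicitly defers to Lemma 3.8 of \cite{LS}, stating the method of proof is "analogous between the two matrix ensembles." Your account adds the one nontrivial verification that the paper leaves implicit, namely that the block structure of $W_\ell = (W, W^\ast)$ still produces honest quadratic forms in an independent Gaussian family so that the LDE and Ward--identity steps go through; this is exactly where the adaptation from \cite{LS} lives, and flagging it is appropriate.
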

The result in Lemma \ref{lemma:offdiagonallocallaw} is a parallel result to Lemma 3.8 in \cite{LS}. The method of proof is analogous between the two matrix ensembles, so we refer to the details of Lemma 3.8 in \cite{LS} for the proof of Lemma \ref{lemma:offdiagonallocallaw}. We thus obtain the following consequence.
\begin{corollary}
In the setting of Theorem \ref{theorem:stronglocallawstronglocallawdeformedmatrices}, we have the following estimate uniformly over $z \in \mathbf{D}_{L,q}$ with $(\xi, \nu)$-high probability:
\begin{align} 
\sup_{z \in \mathbf{D}_{L,q}} \ \Lambda_o(z) \ = \ O \left( \varphi^{\xi} \sqrt{\frac{\Lambda(z) + O(1)}{N \eta}} \right).
\end{align}
\end{corollary}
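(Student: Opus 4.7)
The statement is essentially a repackaging of Lemma~\ref{lemma:offdiagonallocallaw} once we know that the a priori hypothesis $\Lambda_o(z) + \Lambda(z) \leq \varphi^{-2\xi}$ is available uniformly over $z \in \mathbf{D}_{L,q}$. The plan is to first install that a priori bound by a bootstrap in the imaginary coordinate $\eta$ parallel to the one used for Proposition~\ref{prop:weaklocallaw}, then to feed the weak local law into Lemma~\ref{lemma:offdiagonallocallaw}, and finally to take a union bound to upgrade the estimate from a countable grid to all of $\mathbf{D}_{L,q}$.

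For the bootstrap, I would fix the real part $E = \operatorname{Re}(z)$ and define the same discretization $z_k = E + i\eta_k$, $\eta_k = 1 - kN^{-4}$, used in the proof of Proposition~\ref{prop:weaklocallaw}, with $K = N^{O(1)}$ the maximal admissible index. At the initial scale $\eta_0 = 1$, the bound $\Lambda_o(z_0) + \Lambda(z_0) \leq \varphi^{-2\xi}$ is immediate from the trivial estimates on individual Green's function entries (all denominators are bounded below by a constant), so Proposition~\ref{prop:deriveapbounds} and Lemma~\ref{lemma:offdiagonallocallaw} are applicable at $z_0$ and give the desired conclusion there. For the inductive step at $z_k$, Lipschitz continuity of entries of $G(z)$ on $\mathbf{D}_{L,q}$ (with Lipschitz constant $O(N^2)$) transfers the bound from $z_{k-1}$ to $z_k$ up to a loss of $N^{-2}$, which is much smaller than $\varphi^{-2\xi}$, so the hypothesis of Lemma~\ref{lemma:offdiagonallocallaw} is preserved. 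Applying that lemma then yields
\begin{equation*}
\Lambda_o(z_k) \ = \ O\!\left(\varphi^{\xi}\sqrt{\frac{\Lambda(z_k) + O(1)}{N\eta_k}}\right),
\end{equation*}
and combining with the weak local law $\Lambda(z_k) = O(\sqrt{N^\e/(N\eta_k)})$ from Proposition~\ref{prop:weaklocallaw} shows that $\Lambda_o(z_k) + \Lambda(z_k)$ remains well below $\varphi^{-2\xi}$, closing the induction.

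Having controlled $\Lambda_o$ on the grid $\{z_k\}$ with $(\xi,\nu)$-high probability, I would take a union bound over the polynomially many $z_k$; since $(\xi,\nu)$-high probability allows failure probabilities $e^{-\nu \log^\xi N}$ which absorb factors of $N^{O(1)}$, the event on which Lemma~\ref{lemma:offdiagonallocallaw} applies at every $z_k$ still holds with $(\xi,\nu)$-high probability (after adjusting $\nu$). A second application of the $N^2$-Lipschitz bound extends the estimate for $\Lambda_o$ from the grid to all of $\mathbf{D}_{L,q}$, losing only a negligible additive $N^{-2}$. Inserting the weak local law bound for $\Lambda(z)$ delivers the stated inequality uniformly in $z$.

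The main obstacle I expect is verifying that Lemma~\ref{lemma:offdiagonallocallaw}'s hypothesis $\Lambda_o + \Lambda \leq \varphi^{-2\xi}$ is indeed preserved through the bootstrap at the smallest scales $\eta \sim N^{-1}\varphi^L$, where the right-hand side $\varphi^\xi\sqrt{(\Lambda + O(1))/(N\eta)}$ is no longer tiny. The catch is that the bootstrap requires the a priori bound \emph{before} the stronger conclusion is applied, so one must check that the self-improving inequality does not overshoot the threshold $\varphi^{-2\xi}$; this is what the choice $L \geq 40\xi$ in Theorem~\ref{theorem:stronglocallawstronglocallawdeformedmatrices} is designed to guarantee, and the bookkeeping needed to propagate this through the induction is the one point in the argument requiring genuine care.
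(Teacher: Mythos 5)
Your overall architecture — bootstrap in $\eta$ to install the a priori bound $\Lambda_o + \Lambda \leq \varphi^{-2\xi}$, then feed the weak local law into Lemma~\ref{lemma:offdiagonallocallaw}, with union bounds and Lipschitz continuity to pass from the grid to all of $\mathbf{D}_{L,q}$ — matches what the paper intends when it says ``with a similar bootstrapping method,'' and your discussion of why $L \geq 40\xi$ closes the induction at the smallest scales is the right accounting. However, there is a genuine gap at the initialization step $\eta_0 = 1$, and it is not cosmetic.

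You assert that ``$\Lambda_o(z_0) + \Lambda(z_0) \leq \varphi^{-2\xi}$ is immediate from the trivial estimates on individual Green's function entries (all denominators are bounded below by a constant).'' The trivial deterministic bound at $\eta = 1$ is only $|G_{ij}(z_0)| \leq \|G(z_0)\|_{\on{op}} \leq 1/\eta_0 = 1$, which gives $\Lambda_o(z_0) = O(1)$, many orders of magnitude larger than the target threshold $\varphi^{-2\xi} = (\log N)^{-2C_1\xi}$. The smallness of off-diagonal resolvent entries at $\eta = 1$ is not a deterministic fact: it requires the randomness of the Gaussian perturbation $\sqrt{t}\,W_\ell$, and is established by representing $G_{ij}$ via the Schur complement in the removed rows/columns and applying large-deviation concentration of the type in Proposition~\ref{prop:concentration} (quadratic forms in the entries of $W$), yielding $|G_{ij}(z_0)| = O(\varphi^{\xi}/\sqrt{N})$ with $(\xi,\nu)$-high probability. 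Without this input the bootstrap has nowhere to start: you cannot invoke Lemma~\ref{lemma:offdiagonallocallaw} at $z_0$ precisely because its hypothesis $\Lambda_o(z_0) \leq \varphi^{-2\xi}$ is the thing you have not yet shown, and trying to derive it from the lemma itself would be circular. The fix is standard but has to be said: establish the large-$\eta$ off-diagonal bound independently via concentration, and only then run the downward induction in $\eta$ that you describe.
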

The last weak result we discuss here is the following estimates on diagonal entries of the Green's function. In this spirit, we similarly define a pseudo-maximal function controlling the diagonal entries as follows:
\begin{align}
\Lambda_d(z) \ = \ \max_{M + 1 \leq k \leq M+N} \ |G_{kk}(z)|.
\end{align}
On one hand, by eigenvector delocalization and the weak local law, we may bound the diagonal entries by the Stieltjes transform up to a factor of $N^{\e}$ for any small $\e > 0$, and we may compare the Stieltjes transform of $G$ to the Stieltjes transform of the free convolution by the weak local law. Ultimately, this argument provides the following bound for any small $\e > 0$:
\begin{align}
\Lambda_d(z) \ = \ O(N^{\e}).
\end{align}
%
\begin{remark}
We briefly remark that a similar bootstrapping scheme allows for a sharper estimate on $\Lambda_d$ comparing the diagonal entries to the free convolution $m_{\on{fc}}$. We omit the details and refer the reader to \cite{LS}.
\end{remark}
\subsection{Strong local law}
We now focus on improving the weak local law to the strong local law and prove Theorem \ref{theorem:stronglocallawstronglocallawdeformedmatrices}. We begin by introducing the following notation.
\begin{notation}
For a set of indices $\mathbb{T}$, we define the following conditional expectation and fluctuation of a random variable $X$ defined as a function of the matrix entries of $X(t)$:
\begin{align}
\E_{\mathbb{T}} X \ := \ \E \left( X | X(t)^{\mathbb{T}} \right), \quad Q_{\mathbb{T}} X \ := \ X - \E_{\mathbb{T}} X.
\end{align}
\end{notation}
With this notation, we now introduce fluctuation terms that will be important to control in proving Theorem \ref{theorem:stronglocallawstronglocallawdeformedmatrices}; we give them as follows:
\begin{align}
Z_{11}^{(1)} \ &= \ Q_{(1)} \left( \frac{G_{M+1, M+1}}{G_{11} G_{M+1,M+1} - G_{1,M+1} G_{M+1,1}} \right), \\
Z_{12}^{(1)} \ &= \ Q_{(1)} \left( \frac{G_{M+1,1}}{G_{11} G_{M+1, M+1} - G_{1, M+1} G_{M+1, 1}} \right).
\end{align}
We may define the more general fluctuation term replacing the labels $1$ on the RHS above with any index $i \in [[1, M]]$. Secondly, we define another pair of fluctuation terms which will not be immediately important, but we will use them later in deriving the strong local law:
\begin{align}
Z_{21}^{(1)} \ &= \ Q_{(1)} \left( \frac{G_{1,M+1}}{G_{11} G_{M+1, M+1} - G_{1,M+1} G_{M+1,1}} \right), \\
Z_{22}^{(1)} \ &= \ Q_{(1)} \left( \frac{G_{1}}{G_{11} G_{M+1, M+1} - G_{1, M+1} G_{M+1, 1}} \right).
\end{align}
Again, we define $Z_{k\ell}^{(i)}$ similarly by replacing the $1$-indices on the RHS with $i \in [[1, M]]$. 

We now state the fluctuation averaging lemma, which is an important ingredient towards upgrading a weak local law to a strong local law (see \cite{LY} and \cite{LS} for more details).
\begin{prop} \label{prop:fluctuationestimatestronglocallawdeformedmatrices}
Suppose $a_k \asymp 1$ are fixed constants, and assume the following a priori bounds for any fixed $\e > 0$:
\begin{align}
\sup_{\eta \geq N^{-1 + \e}} \ \sup_{i \neq j > M} \ |G_{ij}(E + i \eta)| \ \leq \ O \left( \sqrt{\frac{N^{\e}}{N \eta}} \right), \quad \ \sup_{\eta \geq N^{-1 + \e}} \ \sup_{M + 1 \leq k \leq M+N} \ \left| G_{kk}(E + i \eta) \right| \ = \ O(N^\e). \label{eq:fluctuationapbounds}
\end{align}
Then, we have, for $z = E + i \eta$ with $\eta \geq N^{-1+\e}$,
\begin{align}
\frac{1}{N} \sum_{i = 1}^{N} \ a_k Z_{11}^{(k)} \ &= \ O \left( \frac{N^{\e}}{N \eta} \right), \\
\frac{1}{N} \sum_{i = 1}^N \ a_k Z_{12}^{(k)} \ &= \ O \left( \frac{N^e}{N \eta} \right)
\end{align}
with $(\xi, \nu)$-high probability, where $k = M + i$.
\end{prop}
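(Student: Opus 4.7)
The plan is to prove Proposition \ref{prop:fluctuationestimatestronglocallawdeformedmatrices} by the standard high-moment, fluctuation-averaging method, as in the Wigner case of \cite{LY} and the sample covariance case of \cite{LS}, adapted here to the $2 \times 2$ Schur block structure from Lemma \ref{lemma:GFmatrixequation}. I would first bound a high even moment of the averaged quantity, and then convert to a high-probability statement by Markov's inequality together with a union bound over a sufficiently fine net in $\mathbf{D}_{L,q}$ and Lipschitz continuity of the Green's function entries in $\eta$.

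Concretely, write $S_{11} := \frac{1}{N}\sum_{i=1}^{N} a_{k} Z_{11}^{(k)}$ with $k = M+i$, and expand
\begin{align}
\mathbb{E}|S_{11}|^{2p} \ = \ \frac{1}{N^{2p}} \sum_{i_1,\ldots,i_{2p}} \ a_{k_1}\cdots a_{k_{2p}} \ \mathbb{E}\left[ \prod_{r=1}^{p} Z_{11}^{(k_r)} \prod_{r=p+1}^{2p} \overline{Z_{11}^{(k_r)}} \right],
\end{align}
where $k_r = M+i_r$. Since each $Z_{11}^{(k)}$ arises from applying $Q_{(k)}$, its conditional expectation given $X(t)^{(k)}$ vanishes; hence any index $k_r$ appearing only once in the product contributes $0$ upon integrating out the corresponding row and column of $W$, so only ``matched'' tuples, in which each distinct index appears at least twice, survive. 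The number of such tuples is bounded by $(Cp)^{2p} N^{p}$ rather than $N^{2p}$, which is the combinatorial gain driving the estimate.

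Next I would bound each surviving term by using \eqref{eq:GFmatrixequation} to rewrite the quantity inside $Q_{(k)}$ as a quadratic form in the entries of the row and column $k$ of $W$ against $G^{(\mathbb{T})}$, and then expand $G^{(\mathbb{T})}$ back to $G$ through a finite resolvent identity. Each off-diagonal Green's function factor introduced carries a gain $\sqrt{N^{\e}/(N\eta)}$ by the first half of \eqref{eq:fluctuationapbounds}, diagonal factors are bounded by $N^{\e}$ by the second half, and each Gaussian pairing of $W$-entries contributes an extra factor $N^{-1}$. Combining these with the combinatorial bound gives
\begin{align}
\mathbb{E}|S_{11}|^{2p} \ \leq \ \left( \frac{N^{C\e}}{N\eta} \right)^{2p}
\end{align}
for some $C$ independent of $p$. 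Applying Markov's inequality and choosing $p$ so that $p \asymp \log^{\xi} N$ yields the claimed bound for $S_{11}$ with $(\xi,\nu)$-high probability at a single $z$; the net-and-Lipschitz argument then upgrades this to uniform control over $\mathbf{D}_{L,q}$.

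The analysis for $S_{12} := \frac{1}{N}\sum_{i=1}^{N} a_{k} Z_{12}^{(k)}$ is structurally identical: $Z_{12}^{(k)}$ arises from the off-diagonal block of the same Schur inverse, so the identical expansion applies, with one diagonal factor replaced by an off-diagonal factor that only improves the estimate by an extra $\sqrt{N^{\e}/(N\eta)}$. I expect the main obstacle to be the bookkeeping of the resolvent expansion: one must verify that for every surviving pairing pattern in the moment expansion, the accumulated off-diagonal Green's function factors genuinely dominate the combinatorial proliferation of graphs. This is precisely the analysis carried out for the Wigner flow in \cite{LY} and for deformed covariance matrices in \cite{LS}, and, given the a priori bounds \eqref{eq:fluctuationapbounds}, it adapts to the linearized covariance setting here without structural change.
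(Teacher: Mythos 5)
Your proposal follows essentially the same route as the paper: bound a high even moment of the averaged fluctuation term, exploit the vanishing of unmatched index tuples under the conditional centering $Q_{(k)}$ together with the a priori Green's function bounds \eqref{eq:fluctuationapbounds}, and convert to a $(\xi,\nu)$-high-probability statement via Chebyshev/Markov. The paper itself gives only this outline and defers the combinatorial word-counting on indices to Lemma 7.15 of \cite{LY}, exactly as you propose.
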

\begin{proof}
We give an outline of the proof; for details, we refer to Lemma 7.15 in \cite{LY}. In particular, Proposition \ref{prop:fluctuationestimatestronglocallawdeformedmatrices} follows an application of the Chebyshev inequality combined with the following moment bounds:
\begin{align}
\E \left( \frac{1}{N^{2p}} \sum_{i_1, \ldots, i_{2p} = 1}^{N} \ \prod_{j = 1}^{2p} \ a_{k_j} Z_{11}^{(i_j)} \right) \ = \ O \left( \frac{N^{\e}}{N \eta} \right),
\end{align}
and the analogous estimate replacing $Z_{11}$ with $Z_{12}$. This moment bound is quite technical and requires a combinatorial analysis of words on indices. For this reason we omit the proof, but we encourage the reader to look at \cite{LY} for details.
\end{proof}
We are now in a position to derive the strong local law. We first define the following error terms that will arise naturally from studying the matrix equation \eqref{eq:GFmatrixequation}:
\begin{align}
R_{11}^{(1)} \ &= \ \frac{1}{N} G_{M+1, M+1} + \frac{1}{N} \sum_{i = 1}^N \ G_{M+1, k} G_{k, M+1}, \\
R_{22}^{(1)} \ &= \ \frac{1}{N} G_{11} + \frac{1}{N} \sum_{i = 1}^N \ G_{1, k} G_{k,1}.
\end{align}
We similarly define $R_{jj}^{(\ell)}$ upon replacing $\ell = 1$ on the RHS above with any appropriate index $\ell \in [[1, N]]$. We now record the following consequence of the fluctuation averaging lemma:
\begin{align}
\sup_{z \in \mathbf{D}_{L,q}} \ \sup_{i \in [[1, M]]} \ \sup_{j = 1, 2} \ |R_{jj}^{(i)}| \ \leq \ \frac{N^\e}{N \eta}.
\end{align}
Rewriting the matrix equation \eqref{eq:GFmatrixequation} and extracting these error terms, along with the fluctuations $Z$, we obtain
\begin{align}
\begin{pmatrix}
G_{ii} & G_{i, M+i} \\
G_{M+i, i} & G_{M+i, M+i}
\end{pmatrix}^{-1} \ &= \ \begin{pmatrix}
-z - tm_{\on{fc},t}(z) & V_i \\
V_i & - z - tm_{\on{fc},t}(z)
\end{pmatrix} \nonumber \\
&\quad \ + \  \begin{pmatrix}
t \left( m_N(z;t) - m_{\on{fc},t}(z) \right) & 0 \\
0 & t \left( m_N(z;t) - m_{\on{fc},t}(z) \right)
\end{pmatrix} \nonumber \\
&\quad \ + \ t \mathbf{Z}^{(i)} + t \mathbf{R}^{(i)}. \label{eq:rewrittenGFmatrixequation}
\end{align}
Here, the matrices $\mathbf{Z}$ and $\mathbf{R}$ are defined by the terms $Z^{(i)}_{k\ell}$ and $R^{(i)}_{k \ell}$. For convenience, we will introduce the following notation for the matrices on the RHS above:
\begin{align}
\mathscr{Y}_{V_i} \ &= \ \begin{pmatrix} -z - tm_{\on{fc},t}(z) & V_i \\ V_i & -z - tm_{\on{fc},t}(z) \end{pmatrix}, \\
\mathscr{Y}_{d} \ &= \ \begin{pmatrix} t \left( m_N(z) - m_{\on{fc},t}(z) \right) & 0 \\ 0 & t \left( m_N(z) - m_{\on{fc},t}(z) \right) \end{pmatrix}.
\end{align}
By the resolvent identity, we have the following uniform bound for any small $\e > 0$:
\begin{align}
\sup_{i \in [[1,N]]} \ \left\| \frac{1}{N} \sum_{i = 1}^{N} \left[ \left( \mathscr{Y}_{V_i} - \mathscr{Y}_{d} + t \mathbf{Z}^{(i)} + t \mathbf{R}^{(i)} \right)^{-1} \ - \ \left( \mathscr{Y}_{V_i} - \mathscr{Y}_{d} \right)^{-1} \right] \right\|_{\infty} \ \leq \ C \frac{N^{\e}}{N \eta}.
\end{align}
This is a consequence of a straightforward application of the diagonal estimates on $\mathbf{R}^{(i)}$ we obtained as well as the fluctuation averaging lemma, both coupled with the a priori bounds in the statement of Proposition \ref{prop:fluctuationestimatestronglocallawdeformedmatrices}. This allows us to Taylor expand resolvents of matrices in \eqref{eq:rewrittenGFmatrixequation} to obtain the following main term-error term decomposition of the LHS of \eqref{eq:rewrittenGFmatrixequation}:
\begin{align}
\frac{1}{N} \sum_{i = 1}^{N} \ \begin{pmatrix}
G_{ii} & G_{ik} \\
G_{ki} & G_{kk}
\end{pmatrix} \ &= \ \frac{1}{N} \sum_{i = 1}^{N} \ \left( \mathscr{Y}_{V_i} - \mathscr{Y}_{d} \right)^{-1} \ + \ O \left( \frac{N^{\e}}{N \eta} \right) \nonumber \\
&= \ \frac{1}{N} \sum_{i = 1}^{N} \ \mathscr{Y}_{V_i}^{-1} - \frac{1}{N} \sum_{i = 1}^{N} \ \mathscr{Y}_{V_i}^{-1} \mathscr{Y}_{d} \mathscr{Y}_{V_i}^{-1} \ + \ O \left( \frac{N^{\e}}{N\eta} \right) \label{eq:lastGFmatrixequation}
\end{align}
where the big-Oh terms denote matrices with the corresponding operator bound. The second line follows from the geometric series expansion of the inverse of a matrix noting the $o(1)$ estimates on $\Lambda, \Lambda_o$ provided by the weak local law.
\begin{remark}
We note that if an inverse written above does not exist, a perturbation $\e$ in the entries can be introduced and then taken to $\e \to 0$ in the end of this calculation. We do not fully illustrate this in detail for convenience and clarity of the presentation of the calculation. 
\end{remark}
We now take a partial trace of both sides of the last matrix equation \eqref{eq:lastGFmatrixequation}. In particular, we sum over diagonal entries with index larger than $M$. This gives us the following equation:
\begin{align}
\frac{1}{N} \sum_{i = 1}^{N} \ G_{kk} \ &= \ \frac{1}{N} \sum_{i = 1}^{N} \ \frac{-z - tm_{\on{fc},t}(z)}{(-z - tm_{\on{fc},t}(z))^2 - V_i^2} \nonumber \\
&\quad \quad \ - \ \left(m_N(z;t) - m_{\on{fc},t}(z) \right) \frac{t}{N} \sum_{i = 1}^{N} \ \on{Tr} \mathscr{Y}_{V_i}^{-2} \nonumber \\
&\quad \quad \ + \ O \left( \frac{N^{\e}}{N \eta} \right). \label{eq:traceequationstronglocallawdeformedmatrices}
\end{align}
We now study the trace equation \eqref{eq:traceequationstronglocallawdeformedmatrices}, noting the term on the LHS is exactly $m_N$, and the first term on the RHS is exactly $m_{\on{fc},t}$. A straightforward rearrangement of the above trace equation now implies the following estimate:
\begin{align}
(m_N(z;t) - m_{\on{fc},t}(z)) \left( 1 + \mathscr{E}_t \right) \ = \ O \left( \frac{N^{\e}}{N \eta} \right),
\end{align}
where we defined
\begin{align}
\mathscr{E}_t \ = \ \frac{t}{N} \sum_{i = 1}^{N} \ \on{Tr} \mathscr{Y}_{V_i}^{-2}.
\end{align}
Thus, to derive the strong local law for a fixed point $z \in \mathbf{D}_{L,q}$, it suffices to derive a constant \emph{lower} bound on $1 + \mathscr{E}_t$. To this end we appeal to (7.10) in Lemma 7.2 and (7.26) in Lemma 7.3, both in \cite{LY}. This completes the proof of the strong local law for fixed times. To extend this estimate to all times on a high probability event, we appeal to a standard stochastic continuity argument outlined in detail in the appendix of this paper.
%
%
%
\section{Dyson Brownian motion: eigenvalue SDEs}
\subsection{Statement of the DBM equations}
As in the breakthrough paper \cite{D}, our goal is now to compute the associated eigenvalue dynamics of the SDE \eqref{eq:BMMatrixAugment}. To derive these dynamics systematically, we introduce the following spectral set which captures the trivial eigenvalues of the time-parameterized family of matrices $X(t)$:
\begin{align}
\zeta_t(X) \ := \ \left\{ \lambda(t) \in \sigma(X(t)): \ \lambda^2(t) \not\in \sigma(H(t)^{\ast} H(t)) \right\}.
\end{align}
We now describe the spectral set $\zeta_t(X)$ in words. If $M \geq N$, for any eigenvalue $\lambda^2 \in \sigma(H(t)^{\ast} H(t))$, we obtain a pair of eigenvalues $\pm \lambda \in \sigma(X(t))$ in the spectrum of the linearization $X(t)$. The trivial eigenvalues consist of those eigenvalues $\lambda \in \sigma(X(t))$ not obtained from this spectral procedure. This is the set of eigenvalues $\zeta_t(X)$. We thus deduce any $\lambda(t) \in \zeta_t(X)$ is equal to 0, and $\zeta_t(X)$ is empty exactly when $M = N$.
\begin{remark}
We briefly remark here why we define the set $\zeta_t(X)$, as the eigenvalues in this set are trivially 0. In particular, the corresponding eigenvalue dynamics of the matrix-valued SDE \eqref{eq:BMMatrixAugment} will involve terms depending on the repulsion between eigenvalues, including repulsion between eigenvalues both contained in and not contained in the set $\zeta_t(X)$. Thus, we will want to interpret the trivial eigenvalues as honest eigenvalues and not fixed real numbers.
\end{remark}
We now introduce an important assumption on which regularization of the eigenvalue dynamics heavily depends. The assumption takes form of an a priori estimate on the repulsion of nontrivial eigenvalues from 0 and resembles the notion of regularity for our initial potential $V$.
\begin{assumption} \label{assumption:repulsionfrom0DBM}
If $M > N$, then for any fixed time scale $T \in \mathscr{T}_{\delta, \omega}$ and $t \in [0,T]$
\begin{align}
\inf_{\lambda_\alpha \not\in \zeta(X)} \ |\lambda_\alpha| \ > \ \e \ > \ 0,
\end{align}
where we assume $\e > 0$ is independent of $M, N$.
\end{assumption}
We note that Assumption \ref{assumption:repulsionfrom0DBM} with time scale $T$ follows with high probability if the following hold:
\begin{itemize}
\item Assumption \ref{assumption:repulsionfrom0DBM} holds with time scale $T = 0$.
\item The time scale $T$ satisfies $T = o_{N \to 1}(0)$. 
\end{itemize}
This is an immediate consequence of the perturbation inequality for any real symmetric matrices $A, B$:
\begin{align}
\sup_{\lambda \in \sigma(A-B)} \ \left| \lambda \right| \ \leq \ \|A - B \|_{\infty} \label{eq:spectralnormbound}
\end{align}
where the norm on the RHS is the operator norm. Indeed, we parameterize solutions to \eqref{eq:BMMatrixAugment} as follows for time $t \geq 0$:
\begin{align}
X(t) \ = \ \begin{pmatrix} 0 & H + \sqrt{t} W \\ H^{\ast} + \sqrt{t} W^{\ast} & 0 \end{pmatrix},
\end{align}
where $W \in \mathscr{M}$ is sampled from the standard Gaussian measure on $\mathscr{M}$. Letting $A = X$ and $B = X(t)$, coupled with the high-probability estimate on the operator norm $\| W \|_{\infty}$ from the high-probability estimate on centered Gaussian random variables, this gives sufficiency of the conditions on the initial data and the time scale.

We may now introduce the corresponding eigenvalue dynamics. To this end we define the following simplex:
\begin{align}
\Delta_{M+N} \ := \ \left\{ (x_1, \ldots, x_{M+N}): \ x_1 < x_2 < \ldots < x_{M+N} \right\}.
\end{align}
The eigenvalue dynamics will live on the simplex $\Delta_{M+N}$, with the following deterministic equation for $\lambda \in \zeta_t(X)$:
\begin{align}
\d \lambda(t) \ = \ 0, \quad \lambda(0) \ = \ 0.
\end{align}
The SDEs for nontrivial eigenvalues are given as the main result in the following theorem.
\begin{theorem} \label{theorem:DBMDBM}
Suppose $X(t)$ solves the SDE in \eqref{eq:BMMatrixAugment}. Let $\{ \lambda_\alpha(t) \}_{\alpha}$ denote the eigenvalues of $X(t)$ realized as correlated paths on the simplex $\Delta_{M+N}$. Moreover, we assume $N \gg 1$ is sufficiently large so that the following equations are nonsingular.
\begin{enumerate}
\item If $M = N$, then the eigenvalues $\{ \lambda_\alpha(t) \}_\alpha$ solve the following system of SDEs, known as \emph{Dyson's Brownian Motion}:
\begin{align}
\dd\lambda_\alpha(t) \ = \ \frac{1}{\sqrt{N}} \dd B_\alpha(t) \ + \ \frac{1}{2N} \sum_{\beta \neq \pm \alpha} \ \frac{1}{\lambda_\alpha - \lambda_\beta} \ \dd t,
\end{align}
where $\{ B_\alpha(t) \}_\alpha$ denote independent standard one-dimensional Brownian motions with the constraint $B_\alpha(t) = - B_{-\alpha}(t)$. Here, $B_{-\alpha}(t)$ denotes the Brownian motion driving the process defining $-\lambda_\alpha(t)$.
\item Conditioning on Assumption \ref{assumption:repulsionfrom0DBM}, if $M > N$, then the eigenvalues $\{ \lambda_\alpha(t) \}_{\alpha \not\in \zeta_t(X)}$ solve the following system of SDEs:
\begin{align}
\dd \lambda_\alpha(t) \ &= \ \frac{1}{\sqrt{N}} \dd B_\alpha(t) \ + \ \frac{1}{2N} \left( \sum_{\beta \not\in \zeta_t(X)}^{(\pm \alpha)} \ \frac{1}{\lambda_\alpha - \lambda_\beta} \ + \ \frac{M-N}{\lambda_\alpha} \right) \ \dd t \\
&= \ \frac{1}{\sqrt{N}} \dd B_\alpha(t) \ + \ \frac{1}{2N} \left( \sum_{\beta \not\in \zeta_t(X)}^{(\pm \alpha)} \ \frac{1}{\lambda_\alpha - \lambda_\beta} \ + \ \sum_{\gamma \in \zeta_t(X)} \ \frac{1}{\lambda_\alpha - \lambda_\gamma} \right) \ \dd t
\end{align}
where $(\pm \alpha)$ over the summation indicates an omission of terms $\beta = \pm \alpha$. Again, the $\{ B_\alpha(t) \}_\alpha$ are independent standard one-dimensional Brownian motions except for the constraint $B_\alpha(t) = -B_{-\alpha}(t)$, where we retain the notation for $B_{-\alpha}(t)$.
\end{enumerate}
\end{theorem}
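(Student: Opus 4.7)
The approach is to derive the SDEs from It\^o's formula applied to the eigenvalues of $X(t)$ viewed as smooth functions of the matrix entries, exploiting the special block anti-diagonal structure of the linearization. First I would observe that the off-diagonal block form is preserved under \eqref{eq:BMMatrixAugment}, so when $M > N$ the deterministic rank bound $\mathrm{rank}(X(t)) \leq 2N$ forces at least $M - N$ eigenvalues to stay pinned at $0$; these account for the trivial dynamics $\d\lambda = 0$ on $\zeta_t(X)$. For $t > 0$ the nontrivial eigenvalues are a.s. simple by absolute continuity of the Gaussian measure on $\mathscr{M}$, so analytic perturbation theory applies away from the zero eigenspace.

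The key structural observation is that the eigenvectors of $X(t)$ split into two types. For a nontrivial eigenvalue $\lambda_\alpha \neq 0$, the eigenvector takes the form $u_\alpha = \tfrac{1}{\sqrt{2}}(\tilde{a}_\alpha, \tilde{b}_\alpha)$ with $\|\tilde{a}_\alpha\| = \|\tilde{b}_\alpha\| = 1$, and its partner $u_{-\alpha} = \tfrac{1}{\sqrt{2}}(\tilde{a}_\alpha, -\tilde{b}_\alpha)$ is the eigenvector for $-\lambda_\alpha$; trivial zero eigenvectors (present only when $M > N$) span the subspace $\ker H^\ast \oplus \{0\}$ and take the form $(a_\gamma, 0)$. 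Using these forms together with the block form of $\d X$, the diagonal inner product computes to $\langle u_\alpha, \d X \, u_\alpha\rangle = \tilde{a}_\alpha^T \d H \, \tilde{b}_\alpha$, whose quadratic variation is exactly $\d t / N$; this identifies the driving noise as $N^{-1/2} \d B_\alpha$ for a standard one-dimensional Brownian motion. The constraint $\d B_\alpha = -\d B_{-\alpha}$ follows immediately from the sign flip of $\tilde{b}_\alpha$ in the same computation.

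For the It\^o drift term, I would evaluate $\sum_{\beta \neq \alpha} |\langle u_\alpha, \d X \, u_\beta\rangle|^2 / (\lambda_\alpha - \lambda_\beta)$ case by case in the block basis. A direct calculation shows $\langle u_\alpha, \d X \, u_{-\alpha}\rangle \equiv 0$, which accounts for the omission of $\beta = -\alpha$ in the drift sum. For $\beta \neq \pm\alpha$ nontrivial, simultaneous orthogonality $u_\alpha \perp u_\beta$ and $u_\alpha \perp u_{-\beta}$ forces $\langle \tilde{a}_\alpha, \tilde{a}_\beta\rangle = \langle \tilde{b}_\alpha, \tilde{b}_\beta\rangle = 0$, so the quadratic variation becomes $\d t / (2N)$, yielding the usual $(2N)^{-1}(\lambda_\alpha - \lambda_\beta)^{-1}$ drift. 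The analogous computation against a trivial eigenvector $u_\gamma = (a_\gamma, 0)$ produces $\d t/(2N)$ and contributes $(2N \lambda_\alpha)^{-1}$ per trivial index; summing over the $M - N$ trivial indices recovers the $(M-N)/(2N \lambda_\alpha)$ term in the stated SDE and simultaneously matches the second formulation of the drift via $\sum_{\gamma \in \zeta_t(X)} (\lambda_\alpha - \lambda_\gamma)^{-1}$.

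The main obstacle is the treatment of the degenerate zero eigenspace of fixed dimension $M - N$, on which naive simple-eigenvalue perturbation theory breaks down. The resolution is precisely the deterministic rank bound: the zero eigenspace is preserved by the dynamics, so its eigenvalues do not need to be tracked perturbatively. Assumption \ref{assumption:repulsionfrom0DBM} then ensures $|\lambda_\alpha| > \varepsilon$ uniformly on $[0,T]$ for the nontrivial indices, keeping the $1/\lambda_\alpha$ drift terms nonsingular over the time scale of interest and thereby justifying the SDE asserted in part (2).
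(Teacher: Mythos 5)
Your proposal is correct, and the underlying method --- It\^o's formula applied to eigenvalues via first- and second-order perturbation theory, with the martingale and drift contributions computed from the block anti-diagonal structure of $X(t)$ --- is the same as the paper's. The paper sets up the computation via the matrix-entry derivatives $\partial_{X_{ij}}\lambda_\alpha$ and $\partial_{X_{ij}X_{k\ell}}\lambda_\alpha$ (Lemma \ref{lemma:perturbationformulasDBM}), then splits the resulting double sums by index ranges $i\leq M$, $j>M$ and appeals to the spectral correspondence to evaluate the eigenvector overlaps; you instead introduce the normalized block parameterization $u_\alpha = \tfrac{1}{\sqrt 2}(\tilde a_\alpha, \tilde b_\alpha)$, $u_{-\alpha} = \tfrac{1}{\sqrt 2}(\tilde a_\alpha, -\tilde b_\alpha)$, and compute $\langle u_\alpha, \d X\, u_\beta\rangle$ directly. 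Both routes arrive at the same SDEs, but your parameterization is arguably more transparent in three places that the paper handles somewhat implicitly: (i) the vanishing $\langle u_\alpha, \d X\, u_{-\alpha}\rangle \equiv 0$ gives an honest reason why the $\beta = -\alpha$ term is absent, whereas the paper simply states the perturbation formula with $\beta\neq\pm\alpha$ and calls the proof standard; (ii) your simultaneous use of $u_\alpha\perp u_\beta$ and $u_\alpha\perp u_{-\beta}$ cleanly separates $\langle\tilde a_\alpha,\tilde a_\beta\rangle = \langle\tilde b_\alpha,\tilde b_\beta\rangle = 0$, which the paper encodes as a single orthonormality observation; and (iii) your direct computation of the quadratic variation against the zero-mode eigenvectors $u_\gamma = (a_\gamma, 0)$ makes the constant in the $\tfrac{M-N}{2N\lambda_\alpha}$ drift self-evident. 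Your opening rank argument ($\operatorname{rank} X(t)\leq 2N$ pins $M-N$ eigenvalues at $0$) is also a nice, self-contained replacement for the paper's appeal to the spectral correspondence cited from \cite{Y1}, and your remark about almost-sure simplicity of the nontrivial spectrum plays the same role as the paper's Proposition \ref{prop:intersectionprobability} in justifying the It\^o computation.
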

\begin{remark}
The regime $M = N$ will be referred to as \emph{square DBM}, and the regime $M > N$ as \emph{rectangular DBM}.
\end{remark}
\subsection*{The Ornstein-Uhlenbeck Variant}
We now briefly discuss a slight variant of the matrix dynamics \eqref{eq:BMMatrix} and \eqref{eq:BMMatrixAugment}. In particular, instead of a classical Brownian motion, we consider the following matrix-valued \emph{Ornstein-Uhlenbeck} (OU) equation:
\begin{align}
\d H(t) \ = \ \frac{1}{\sqrt{N}} \d \mathbf{B}(t) \ - \ \frac12 H(t) \d t, \quad H(0) \ = \ H. \label{eq:OUMatrix}
\end{align}
Similarly, we define the Ornstein-Uhlenbeck equation on the space $\mathscr{M}_{\ell}$ as follows:
\begin{align}
\d X(t) \ = \ \begin{pmatrix} 0 & \d H(t) \\ \d H(t)^{\ast} & 0 \end{pmatrix}, \quad X(0) \ = \ X \ = \ \begin{pmatrix} 0 & H \\ H^{\ast} & 0 \end{pmatrix}. \label{eq:OUMatrixAugment}
\end{align}
In particular, the equation \eqref{eq:OUMatrixAugment} is the matrix-valued SDE studied in detail in \cite{Y2}. On the simplex $\Delta_{M+N}$, we may derive a similar system of SDEs driving the eigenvalue dynamics corresponding to \eqref{eq:OUMatrixAugment}. Before we state this result, we briefly remark on the qualitative similarities and differences when adding the drift term in \eqref{eq:OUMatrix}. 

As for the corresponding eigenvalue SDEs for the equation \eqref{eq:OUMatrixAugment}, by the Ito formula one should expect only a change in the drift term. This is, indeed, true, and we summarize the changes in the following theorem.
\begin{theorem} \label{theorem:DBMDBMOU}
Suppose $X(t)$ solves the SDE in \eqref{eq:OUMatrixAugment}. Let $\{ \lambda_\alpha(t) \}_{\alpha}$ denote the eigenvalues of $X(t)$ realized as correlated paths on the simplex $\Delta_{M+N}$.
\begin{enumerate}
\item If $M = N$, then, the eigenvalues $\{ \lambda_\alpha(t) \}_\alpha$ solve the following system of SDEs:
\begin{align}
\dd\lambda_\alpha(t) \ = \ \frac{1}{\sqrt{N}} \dd B_\alpha(t) \ + \ \left( \frac{1}{2N} \sum_{\beta \neq \pm \alpha} \ \frac{1}{\lambda_\alpha - \lambda_\beta} - \frac{\lambda_\alpha(t)}{2} \right) \dd t,
\end{align}
where $\{ B_\alpha(t) \}_\alpha$ denote independent standard one-dimensional Brownian motions with the constraint $B_\alpha(t) = - B_{-\alpha}(t)$. Here, $B_{-\alpha}(t)$ denotes the Brownian motion driving the process defining $-\lambda_\alpha(t)$.
\item Conditioning on Assumption \ref{assumption:repulsionfrom0DBM}, if $M > N$, then the eigenvalues $\{ \lambda_\alpha(t) \}_{\alpha \not\in \zeta_t(X)}$ solve the following system of SDEs:
\begin{align}
\dd\lambda_\alpha(t) \ &= \ \frac{1}{\sqrt{N}} \dd B_\alpha(t) \ + \ \left( \frac{1}{2N} \sum_{\beta \not\in \zeta_t(X)}^{(\pm \alpha)} \ \frac{1}{\lambda_\alpha - \lambda_\beta} \ + \ \frac{M-N}{2N\lambda_\alpha}  - \frac{\lambda_\alpha(t)}{2} \right) \ \dd t \\
&= \ \frac{1}{\sqrt{N}} \dd B_\alpha(t) \ + \ \left( \frac{1}{2N} \sum_{\beta \not\in \zeta_t(X)}^{(\pm \alpha)} \ \frac{1}{\lambda_\alpha - \lambda_\beta} \ + \ \frac{1}{2N} \sum_{\gamma \in \zeta_t(X)} \ \frac{1}{\lambda_\alpha - \lambda_\gamma} - \frac{\lambda_\alpha(t)}{2} \right) \ \dd t
\end{align}
where the notation for the superscript $(\pm \alpha)$ over the summation indicates an omission of terms $\beta$ with $\beta = \pm \alpha$. Again, the $\{ B_\alpha(t) \}_\alpha$ denote independent standard one-dimensional Brownian motions except for the constraint $B_\alpha(t) = -B_{-\alpha}(t)$, where we retain the same notation for $B_{-\alpha}(t)$.
\end{enumerate}
\end{theorem}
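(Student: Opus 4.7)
The plan is to reduce Theorem \ref{theorem:DBMDBMOU} to Theorem \ref{theorem:DBMDBM} by an Ito-calculus argument that isolates the contribution of the additional drift $-\tfrac{1}{2} H(t) \dd t$ appearing in the OU matrix equation \eqref{eq:OUMatrix}. The key observation is that at the level of the augmented matrix $X(t)$ the OU drift reads $-\tfrac{1}{2} X(t) \dd t$, which is linear in $X$, while the Brownian part of $\dd X(t)$ is identical in \eqref{eq:BMMatrixAugment} and \eqref{eq:OUMatrixAugment}. Consequently the quadratic covariations $\dd[X_{ij}, X_{kl}]$ are unchanged, so only the first-order (drift) term of Ito's formula applied to the eigenvalues acquires a new contribution.

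First I would apply Ito's formula to each nontrivial simple eigenvalue $\lambda_\alpha(t)$ viewed as a smooth function of the entries of $X(t)$, using the standard first- and second-order Hadamard perturbation identities $\partial_{X_{ij}}\lambda_\alpha = \mathbf{u}_\alpha(i)\mathbf{u}_\alpha(j)$ (with the usual symmetry factors on off-diagonal entries) and the analogous second-order identity involving sums over other eigenvalues weighted by $(\lambda_\alpha - \lambda_\beta)^{-1}$. The Brownian increment plus the Ito correction reproduce exactly the RHS of the SDEs in Theorem \ref{theorem:DBMDBM}, in both the $M=N$ and $M>N$ cases, including the $1/(\lambda_\alpha - \lambda_\gamma)$ contribution from trivial eigenvalues in the rectangular regime.

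The only additional term is the linear drift contribution
\[
-\tfrac{1}{2} \sum_{i,j} \partial_{X_{ij}}\lambda_\alpha \cdot X_{ij}(t) \, \dd t \ = \ -\tfrac{1}{2} \mathbf{u}_\alpha^T X(t) \mathbf{u}_\alpha \, \dd t \ = \ -\tfrac{\lambda_\alpha(t)}{2} \, \dd t,
\]
since $X(t)\mathbf{u}_\alpha = \lambda_\alpha(t)\mathbf{u}_\alpha$ and $\| \mathbf{u}_\alpha \|_2 = 1$. Adding this drift to each SDE of Theorem \ref{theorem:DBMDBM} produces the stated equations in both the square and rectangular cases. The Brownian-motion pairing $B_\alpha(t) = -B_{-\alpha}(t)$ is preserved since the OU drift respects the $\pm$-symmetry of the spectrum of $X(t)$ inherited from its off-diagonal block structure.

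The main technical obstacle is handling eigenvalue collisions and, in the rectangular case $M > N$, the possibility of a nontrivial eigenvalue approaching zero, either of which would invalidate the smooth dependence of $\lambda_\alpha$ on the matrix entries. Collisions occur on a probability-zero set on the simplex $\Delta_{M+N}$ by the usual non-crossing argument for DBM, so one may localize to their complement; the zero-approach issue is precisely the content of Assumption \ref{assumption:repulsionfrom0DBM}, under which the $1/\lambda_\alpha$-type drift and the eigenvector derivatives remain uniformly well-defined on any $[0,T]$ with $T \in \mathscr{T}_{\delta,\omega}$. With these regularizations in place the argument is a direct transcription of the Ito computation proving Theorem \ref{theorem:DBMDBM}, plus the linear drift contribution identified above.
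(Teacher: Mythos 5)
Your proposal is correct and follows essentially the same route as the paper: reduce to Theorem \ref{theorem:DBMDBM}, observe that the Brownian increments and hence the quadratic covariations are unchanged, and isolate the single extra drift $-\tfrac12\sum_{ij}X_{ij}\,\partial_{X_{ij}}\lambda_\alpha\,\dd t$, which the first-order Hadamard formula evaluates to $-\tfrac12\mathbf{v}_\alpha^{T}X\mathbf{v}_\alpha\,\dd t=-\tfrac12\lambda_\alpha\,\dd t$. The only difference is that you make explicit the localization away from collisions and away from $\lambda_\alpha=0$ (via Assumption \ref{assumption:repulsionfrom0DBM}), which the paper handles once in the derivation of Theorem \ref{theorem:DBMDBM} and treats as implicit for the OU variant.
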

\subsection{Derivation of the DBM equations}
As with the usual derivation of the DBM equations for Wigner matrices, we will apply the Ito formula. Exploiting the Ito formula will require eigenvalue-eigenvector identities known as the first- and second-order perturbation formulas, which we state after establishing the following notation.
\begin{notation}
Suppose $\pm \lambda_{\alpha} \not\in \zeta(X)$ are an eigenvalue pair of $X$, so in particular $-\lambda_{\alpha} = \lambda_{-\alpha}$. If $\mathbf{v}_{\alpha}$ denotes the eigenvector of $X$ corresponding to the eigenvalue $\lambda_{\alpha}$, then we let $\mathbf{v}_{-\alpha}$ denote the eigenvector of $X$ corresponding to the eigenvalue $\lambda_{-\alpha}$.
\end{notation}
By standard linear algebra (and/or compact operator theory), we know
\begin{align}
\mathbf{v}_{-\alpha}(i) \ = \ \begin{cases}
	\mathbf{v}_{\alpha}(i) & i \leq M \\
	-\mathbf{v}_{\alpha}(i) & i > M
\end{cases}. \label{eq:pmeigenvectors}
\end{align}
%
\begin{lemma} \label{lemma:perturbationformulasDBM}
Suppose $X \in \mathscr{M}_{\ell}$, and let $\{ \lambda_\alpha \}_\alpha$ denote the eigenvalues of $X$. The following formulas hold with high probability in context of the DBM. Writing $X = (X_{ij})$, we have, for $\lambda_\alpha(t) \not\in \zeta(X)$,
\begin{align}
\partial_{X_{ij}} \lambda_\alpha \ &= \ \mathbf{v}_\alpha(i) \mathbf{v}_\alpha(j), \\
\partial_{X_{ij} X_{k\ell}} \lambda_\alpha \ &= \ \sum_{\beta \neq \pm \alpha} \ \frac{\mathbf{v}_\beta(k) \mathbf{v}_\alpha(\ell) + \mathbf{v}_\beta(\ell) \mathbf{v}_\alpha(k) }{\lambda_\alpha - \lambda_\beta} \left[ \mathbf{v}_\beta(i) \mathbf{v}_\alpha(j) + \mathbf{v}_\beta(j) \mathbf{v}_\alpha(i) \right],
\end{align}
where the notation for the summation index denotes a sum over eigenvalues $\lambda_\beta \neq \pm \lambda_\alpha$. Moreover, the corresponding eigenvectors $\mathbf{v}_{\alpha}$ satisfy the following derivative formula:
\begin{align}
\partial_{X_{ij}} \mathbf{v}_\alpha \ = \ \sum_{\beta \neq \pm \alpha} \ \frac{\mathbf{v}_\beta^\ast \left[ \partial_{X_{ij}} X \right] \mathbf{v}_\alpha}{\lambda_\alpha - \lambda_\beta} \mathbf{v}_\beta.
\end{align}
\end{lemma}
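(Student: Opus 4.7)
The approach is standard Rayleigh--Schr\"odinger first- and second-order perturbation theory for self-adjoint matrices, specialized to the block-structured ensemble $X \in \mathscr{M}_\ell$ with its $\pm$-paired spectrum. Classical perturbation theory requires the relevant eigenvalue $\lambda_\alpha$ to be simple; along the DBM flow this holds almost surely by Assumption \ref{assumption:repulsionfrom0DBM} (which keeps nontrivial eigenvalues uniformly away from the possibly degenerate eigenvalue $0$), together with the generic non-coincidence of eigenvalues produced by the Gaussian smoothing.

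For the first-order formula for $\lambda_\alpha$, the plan is to differentiate the eigenvalue equation $X\mathbf{v}_\alpha = \lambda_\alpha \mathbf{v}_\alpha$ in the entry $X_{ij}$, inner-product the result with $\mathbf{v}_\alpha$, and use $\mathbf{v}_\alpha^\ast X = \lambda_\alpha \mathbf{v}_\alpha^\ast$ together with $\mathbf{v}_\alpha^\ast \mathbf{v}_\alpha = 1$ to cancel all terms containing $\partial_{X_{ij}}\mathbf{v}_\alpha$. What remains is $\partial_{X_{ij}}\lambda_\alpha = \mathbf{v}_\alpha^\ast(\partial_{X_{ij}} X)\mathbf{v}_\alpha$, and evaluating $\partial_{X_{ij}} X$ against $\mathbf{v}_\alpha$ gives $\mathbf{v}_\alpha(i)\mathbf{v}_\alpha(j)$. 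For the eigenvector derivative, the plan is to expand $\partial_{X_{ij}}\mathbf{v}_\alpha = \sum_\gamma c_\gamma \mathbf{v}_\gamma$ in the orthonormal eigenbasis, then extract $c_\gamma$ for $\gamma \neq \alpha$ by inner-producting the differentiated eigenvalue equation with $\mathbf{v}_\gamma$ and invoking $\mathbf{v}_\gamma^\ast X = \lambda_\gamma \mathbf{v}_\gamma^\ast$. This yields $c_\gamma = \mathbf{v}_\gamma^\ast(\partial_{X_{ij}} X)\mathbf{v}_\alpha / (\lambda_\alpha - \lambda_\gamma)$. The self-component $c_\alpha$ vanishes from differentiating normalization.

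The second-order formula for $\lambda_\alpha$ then follows by applying $\partial_{X_{k\ell}}$ to the first-order formula via the product rule and substituting the eigenvector derivative just obtained. The symmetrization $\mathbf{v}_\beta(i)\mathbf{v}_\alpha(j)+\mathbf{v}_\beta(j)\mathbf{v}_\alpha(i)$ in the statement emerges from taking $\partial_{X_{ij}} X$ to be the symmetric perturbation consistent with $X \in \mathscr{M}_\ell$ (i.e.\ supported on both $(i,j)$ and $(j,i)$), which likewise symmetrizes the factor coming from the eigenvector expansion.

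The main subtlety I anticipate is the restriction of the sums to $\beta \neq \pm\alpha$, rather than merely $\beta \neq \alpha$, which is what a naive application of perturbation theory would yield. Justifying the omission of $\beta = -\alpha$ should rely on the $\pm$-eigenvector identity \eqref{eq:pmeigenvectors}: for a block-preserving symmetric perturbation supported on positions with one index $\leq M$ and the other $>M$, a direct computation using $\mathbf{v}_{-\alpha}(i)=\mathbf{v}_\alpha(i)$ for $i\leq M$ and $\mathbf{v}_{-\alpha}(j)=-\mathbf{v}_\alpha(j)$ for $j>M$ shows $\mathbf{v}_{-\alpha}^\ast (\partial_{X_{ij}} X) \mathbf{v}_\alpha = 0$, so the $\beta=-\alpha$ contribution drops out of both the eigenvector expansion and the second-order eigenvalue formula identically. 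A secondary technical point is the rectangular regime $M > N$, where $0$ is an eigenvalue of multiplicity $M-N$ and the associated eigenspace is degenerate; here Assumption \ref{assumption:repulsionfrom0DBM} keeps the nontrivial eigenvalues $\lambda_\alpha \notin \zeta(X)$ uniformly separated from this degenerate kernel, so classical nondegenerate perturbation theory still applies to the $\lambda_\alpha$ of interest.
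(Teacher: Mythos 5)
Your proposal is correct and follows precisely the standard Rayleigh--Schr\"odinger perturbation argument that the paper invokes and omits (the paper gives no proof, simply citing the Wigner-case derivation as analogous). You correctly identify the one nonstandard point in the linearized-covariance setting --- the exclusion of $\beta=-\alpha$ from the sums --- and the block-parity computation you sketch using \eqref{eq:pmeigenvectors}, namely that $\mathbf{v}_{-\alpha}^\ast(\partial_{X_{ij}}X)\mathbf{v}_\alpha = \mathbf{v}_{-\alpha}(i)\mathbf{v}_\alpha(j)+\mathbf{v}_{-\alpha}(j)\mathbf{v}_\alpha(i)=0$ for $i\leq M<j$, is exactly the right justification. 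One bookkeeping point worth flagging: the symmetrized convention $\partial_{X_{ij}}X = E_{ij}+E_{ji}$ that you implicitly rely on (and that is needed both for the $\beta=-\alpha$ cancellation in the eigenvector derivative and for the $(k,\ell)$-symmetric numerator in the second-order formula) produces $2\,\mathbf{v}_\alpha(i)\mathbf{v}_\alpha(j)$ at first order rather than the stated $\mathbf{v}_\alpha(i)\mathbf{v}_\alpha(j)$; the lemma as written mixes conventions (the drift calculation later in the section compensates by summing over both orderings of each index pair), so your write-up should commit to one convention uniformly rather than assert that ``evaluating $\partial_{X_{ij}}X$ against $\mathbf{v}_\alpha$ gives $\mathbf{v}_\alpha(i)\mathbf{v}_\alpha(j)$'' without comment.
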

The proof of these identities is standard and used in the derivation of the DBM equations for Wigner matrices, so we omit it. Before we proceed with the Ito formula calculation, we first introduce the following result, whose proof may be found in, for example, \cite{AGZ}. This result shows that the event on which the nontrivial eigenvalues of $X(t)$ intersect occurs with probability 0 and thus justifies the Ito formula calculation.
\begin{prop} \label{prop:intersectionprobability}
For each nontrivial eigenvalue $\lambda_{\alpha}(t) \not\in \zeta(X(t))$, define the event
\begin{align}
E_{\alpha}(t) \ := \ \bigcup_{\beta \neq \alpha: \lambda_{\beta} \not\in \zeta(X)} \ \left\{ \lambda_{\alpha}(t) = \lambda_{\beta}(t) \right\},
\end{align}
where $X(t)$ solves the stochastic matrix dynamics \eqref{eq:BMMatrixAugment} and \eqref{eq:OUMatrixAugment}. Then for any fixed time $T \geq 0$, 
\begin{align}
\mathbb{P} \left( \bigcap_{t \leq T} \ \bigcap_{\lambda_{\alpha} \not\in \zeta(X)} \ E_{\alpha}(t) \right) \ = \ 0.
\end{align}
\end{prop}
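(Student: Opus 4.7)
The strategy I would follow is the classical non-collision argument for Dyson-type eigenvalue processes, combining positive-time Gaussian smoothing with an It\^{o} computation on a logarithmic barrier whose singular contributions are cancelled by the explicit repulsive drift of Theorem \ref{theorem:DBMDBM}.

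First, I would reduce to an initial configuration with distinct nontrivial eigenvalues. For any fixed $t_0 \in (0, T]$, the matrix $X(t_0) = X + \sqrt{t_0}\, W_\ell$ has a strictly positive density with respect to Hilbert--Schmidt measure on $\mathscr{M}_\ell$, since $W_\ell$ is a nondegenerate Gaussian on this space. The locus of $Y \in \mathscr{M}_\ell$ whose nontrivial spectrum has a repeated eigenvalue (or, in the rectangular regime, a vanishing nontrivial eigenvalue) is the zero set of the discriminant of the nontrivial factor of the characteristic polynomial of $Y$, hence a proper algebraic subvariety. Therefore $\mathbb{P}$-almost surely the nontrivial eigenvalues of $X(t_0)$ are pairwise distinct and, when $M > N$, bounded away from zero.

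Starting from $t_0$, the nontrivial eigenvalues evolve under the SDEs of Theorem \ref{theorem:DBMDBM}, and I would apply It\^{o}'s formula to the logarithmic barrier
\begin{align*}
\Phi(t) \ := \ - \sum_{\substack{\alpha \neq \pm \beta \\ \lambda_\alpha, \lambda_\beta \not\in \zeta_t(X)}} \log \abs{\lambda_\alpha(t) - \lambda_\beta(t)} \ - \ \one_{\{M > N\}} \sum_{\lambda_\alpha \not\in \zeta_t(X)} \log \abs{\lambda_\alpha(t)}
\end{align*}
on the stopped interval $[t_0, \tau_n \wedge T]$, where $\tau_n$ is the first time after $t_0$ at which either the minimum nontrivial eigenvalue gap drops below $1/n$, or (in the rectangular regime) the minimum absolute value of a nontrivial eigenvalue does. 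The key classical cancellation: the singular It\^{o} second-order contribution $\tfrac{1}{(\lambda_\alpha - \lambda_\beta)^2}$ is precisely balanced against cross-terms with the repulsive drift $\tfrac{1}{2N} \sum_\gamma (\lambda_\alpha - \lambda_\gamma)^{-1}$; similarly, the singular drift $\tfrac{M-N}{2N\lambda_\alpha}$ cancels the It\^{o} correction from $-\log\abs{\lambda_\alpha}$, yielding an effective Bessel dimension $M - N + 1 \geq 2$ which is transient from the origin. After these cancellations, $\dd \Phi$ equals a martingale plus a drift bounded uniformly in $n$ on $[t_0, T]$, with uniform control provided by the rigidity estimate of Theorem \ref{theorem:optimalrigiditystronglocallawdeformedmatrices}. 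Markov's inequality then forces $\tau_n \to \infty$ almost surely, giving non-collision on $[t_0, T]$. Sending $t_0 \downarrow 0$ along a countable sequence and intersecting the resulting full-measure events completes the proof.

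The main obstacle is the pairing constraint $B_\alpha(t) = - B_{-\alpha}(t)$ encoded in Theorem \ref{theorem:DBMDBM}: the paired Brownian motions produce nonzero quadratic covariations $\dd \langle \lambda_\alpha, \lambda_{-\alpha}\rangle_t = -\tfrac{1}{N}\, \dd t$, so the It\^{o} expansion of $\log\abs{\lambda_\alpha - \lambda_{-\alpha}}$ picks up an additional term not present in the standard Wigner derivation. In the square regime $M = N$, this pair is precisely the one omitted from the drift sum (the $\beta = \pm\alpha$ exclusion), so one must argue separately that $\lambda_\alpha$ cannot hit zero; I would do this by a direct Bessel-type comparison for $2\lambda_\alpha = \lambda_\alpha - \lambda_{-\alpha}$, whose Brownian part has quadratic variation $\tfrac{4}{N}\, \dd t$ after exploiting the antisymmetry, combined with the $O(1)$ contribution of the remaining drift terms. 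Verifying that the algebra of cancellations closes under this pairing, and that the interaction with the trivial eigenvalues $\gamma \in \zeta_t(X)$ in the rectangular drift is benign, is the only genuinely new ingredient beyond the classical argument of \cite{AGZ}.
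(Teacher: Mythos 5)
The paper does not actually prove this proposition; it merely cites \cite{AGZ} and moves on, so you are supplying a proof the paper omits. Your overall architecture --- positive-time Gaussian smoothing to start from a simple spectrum, then a stopped It\^o expansion of a logarithmic barrier, then $t_0 \downarrow 0$ --- is the right one and is essentially the argument used in \cite{AGZ} for Wigner DBM. The cancellation bookkeeping you describe for pairs with $\alpha \neq \pm\beta$ does close: the $\gamma = -\alpha$ and $\gamma = -\beta$ cross terms in $-(\lambda_\alpha - \lambda_\beta)^{-1}\bigl(\sum_\gamma (\lambda_\alpha - \lambda_\gamma)^{-1} - \sum_\gamma(\lambda_\beta-\lambda_\gamma)^{-1}\bigr)$ cancel pairwise, and the remaining three-index cross terms symmetrize to zero in the usual way, so the barrier has bounded drift on the stopped interval. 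Two issues, however.

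First, your treatment of the $\beta = -\alpha$ collision in the square regime $M = N$ does not work, and in fact cannot: that collision genuinely occurs with positive probability. Using the antisymmetry $\lambda_{-\gamma} = -\lambda_\gamma$, the drift of $2\lambda_\alpha = \lambda_\alpha - \lambda_{-\alpha}$ is $\frac{1}{N}\sum_{\beta\neq\pm\alpha}(\lambda_\alpha - \lambda_\beta)^{-1}$, and pairing $\beta$ with $-\beta$ gives $\frac{1}{N}\sum_{\beta > 0,\, \beta\neq\alpha} 2\lambda_\alpha/(\lambda_\alpha^2 - \lambda_\beta^2)$, which \emph{vanishes} as $\lambda_\alpha \to 0$. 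So $2\lambda_\alpha$ near the origin is just a Brownian motion with bounded drift, not a Bessel process of dimension $\geq 2$; there is no repulsion. Indeed, $\det H(t)$ with $H(t) = V + \sqrt{t}\,W$ changes sign on $[0,T]$ with positive probability (its limiting sign as $t \to \infty$ is that of $\det W$, which disagrees with $\operatorname{sgn}\det V$ half the time), so the linearization acquires a double eigenvalue at $0$. The proposition as literally written, with the event defined by $\beta \neq \alpha$ rather than $\beta \neq \pm\alpha$, is therefore false for $M = N$. What is actually needed to justify the It\^o derivation of Theorem \ref{theorem:DBMDBM} is non-collision for $\beta \neq \pm\alpha$ only, since the $\beta = \pm\alpha$ terms are already excised from the second-order perturbation formula; you should restrict the barrier and the conclusion accordingly, and separately note that at the isolated (Lebesgue-null in $t$) zeros of $\lambda_\alpha$ the perturbation formula is recovered by the analytic dependence of the singular-value decomposition.

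Second, invoking the rigidity estimate of Theorem \ref{theorem:optimalrigiditystronglocallawdeformedmatrices} is both unnecessary and out of place here. After the singular cancellations the remaining drift of the barrier is controlled by crude spectral bounds --- the assumption $\|V\|_\infty \leq N^{B_V}$ together with a standard operator-norm bound on the Gaussian perturbation gives a deterministic polynomial bound on $\sup_t \|X(t)\|_\infty$ on a high-probability event, which is all the uniform control needed on $[t_0, T \wedge \tau_n]$. Reaching for rigidity obscures what is really a soft, local argument, and risks giving the impression that the derivation of the DBM equations depends on hard estimates that were themselves obtained for the same flow.
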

We begin by deriving the square DBM for the matrix-valued Brownian motion process. By Ito's formula, we have
\begin{align}
\d\lambda_\alpha \ &= \ \sum_{X_{ij} \in X} \ \partial_{X_{ij}} \lambda_\alpha \ \d X_{ij} \ + \ \frac12 \sum_{X_{ij}, X_{k\ell} \in X} \ \partial_{X_{ij} X_{k\ell}} \lambda_\alpha \ \d\langle X_{ij}, X_{k\ell} \rangle \\
&= \ \frac{1}{\sqrt{N}} \sum_{X_{ij} \in X} \ \partial_{X_{ij}} \lambda_\alpha \ \d B_{ij} \ + \ \frac12 \left( \sum_{X_{ij}, X_{k\ell} \in X} \ \partial_{X_{ij} X_{k \ell}} \lambda_\alpha \ \d\langle X_{ij}, X_{k\ell} \rangle \right).
\end{align}
In the above equation, we suppress from all processes the dependence on time. We first address the martingale term, which we denote by $\mathscr{X}_{\on{mgle}}(t)$. Using the first-order perturbation formula for the eigenvalue $\lambda_\alpha$, we have
\begin{align}
\mathscr{X}_{\on{mgle}}(t) \ = \ \frac{1}{\sqrt{N}} \left( \sum_{i = 1}^M \sum_{j = M+1}^{M+N} \ + \ \sum_{i = M+1}^{M+N} \sum_{j = 1}^N \right) \ \mathbf{v}_\alpha(i) \mathbf{v}_\alpha(j) \ \d B_{ij}.
\end{align}
We now let $\mathscr{X}_{\on{mgle},1}(t)$ denote the first sum in $\mathscr{X}_{\on{mgle}}(t)$ and $\mathscr{X}_{\on{mgle},2}(t)$ denote the second sum. 

Fix a time $t_0 \geq 0$, and let $\E_{t_0}$ denote conditional expectation conditioning on events occurring up to time $t_0$. We first compute
\begin{align}
\E_{t_0} \langle \mathscr{X}_{\on{mgle},1}(t), \mathscr{X}_{\on{mgle},1}(t) \rangle \ &= \ \frac{1}{N} \sum_{i = 1}^M \sum_{j = M+1}^{M+N} \ |\mathbf{v}_\alpha(i)|^2 |\mathbf{v}_\alpha(j)|^2 \langle \d B_{ij}, \d B_{ij} \rangle \\
&= \ \frac{1}{4N} \d t. 
\end{align}
This follows from the matrix structure of $X(t)$: for indices $(i,j)$ with $|j - i| \geq M$, the Brownian motions $\d B_{ij}$ are statistically independent. Similarly, 
\begin{align}
\E_{t_0} \langle \mathscr{X}_{\on{mgle},2}(t), \mathscr{X}_{\on{mgle},2}(t) \rangle \ &= \ \frac{1}{N} \sum_{i = M+1}^{M+N} \sum_{j = 1}^M \ |\mathbf{v}_\alpha(i)|^2 |\mathbf{v}_\alpha(j)|^2 \langle \d B_{ij}, \d B_{ij} \rangle \\
&= \ \frac{1}{4N} \d t.
\end{align}
Lastly, to compute the covariation, we have from the same calculation
\begin{align}
\E_{t_0} \langle \mathscr{X}_{\on{mgle},1}(t), \mathscr{X}_{\on{mgle},2}(t) \rangle \ &= \ \frac{1}{N} \sum_{i = 1}^{M} \sum_{j = 1}^{M} \ |\mathbf{v}_\alpha(i)|^2 |\mathbf{v}_\alpha(j)|^2 \ \d t \\
&= \ \frac{1}{4N} \d t.
\end{align}
Thus, we see that $\mathscr{X}_{\on{mgle}}(t)$ is a centered Gaussian process with quadratic variation given by 
\begin{align}
d \langle \mathscr{X}_{\on{mgle}}(t), \mathscr{X}_{\on{mgle}}(t) \rangle \ = \ \left( \frac{1}{4N} + \frac{1}{4N} + \frac{2}{4N} \right) \d t \ = \ \frac{1}{N} \d t.
\end{align}
This implies, by the Kolmogorov equations, that $\mathscr{X}_{\on{mgle}}(t)$ is a scaled Brownian motion, i.e.
\begin{align}
\mathscr{X}_{\on{mgle}}(t) \ = \ \frac{1}{\sqrt{N}} \d B_\alpha(t),
\end{align}
where $B_\alpha(t)$ is a standard one-dimensional Brownian motion on $\R$. To show the Brownian motions $B_\alpha(t)$ are independent, it suffices to show they are statistically uncorrelated because they are Gaussian random variables. This follows from orthonormality of the eigenvectors $\{ \mathbf{v}_\alpha\}_\alpha$ and the representation of $\mathscr{X}_{\on{mgle}}(t)$ in terms of the eigenvectors given in Lemma \ref{lemma:perturbationformulasDBM}. The relation $B_\alpha(t) = -B_{-\alpha}(t)$ follows immediately from $\dot{\lambda}_\alpha = -\dot{\lambda}_{-\alpha}$.

We now address the drift term; because the entries $X_{ij}, X_{k\ell}$ are driven by i.i.d. Brownian motions $B_{ij}(t), B_{k\ell}(t)$, we have
\begin{align}
\d \langle X_{ij}(t), X_{k\ell}(t) \rangle \ = \ \frac{1}{N} \left( \delta_{ik} \delta_{\ell j} + \delta_{i \ell} \delta_{jk} \right) \d t
\end{align}
given the symmetric structure of the matrix $X_t$. Thus, the drift term may be written as
\begin{align}
\mathscr{X}_{\on{drift}}(t) \ &= \ \frac12 \left( \frac{1}{N} \sum_{X_{ij} = X_{k\ell} \in X} \ \left( \partial_{X_{ij} X_{k \ell}} \lambda_\alpha \right) \right) \ \d t \\
&= \ \frac{1}{2N} \left( \sum_{i = 1}^M \ \sum_{j = M+1}^{M+N} \ \sum_{\beta \neq \pm \alpha} \ \frac{\mathbf{v}_\beta(i) \mathbf{v}_\alpha(j) + \mathbf{v}_\beta(j) \mathbf{v}_\alpha(i) }{\lambda_\alpha - \lambda_\beta} \left[ \mathbf{v}_\beta(i) \mathbf{v}_\alpha(j) + \mathbf{v}_\beta(j) \mathbf{v}_\alpha(i) \right] \right) \ \d t.
\end{align}
For each term in the sum, we expand the eigenvector terms and group according to indices $\alpha, \beta$ as follows:
\begin{align}
\left[ \mathbf{v}_\beta(i) \mathbf{v}_\alpha(j) + \mathbf{v}_\beta(j) \mathbf{v}_\alpha(i) \right] \left[ \mathbf{v}_\beta(i) \mathbf{v}_\alpha(j) + \mathbf{v}_\beta(j) \mathbf{v}_\alpha(i) \right] \ &= \ |\mathbf{v}_\beta(i)|^2 |\mathbf{v}_\alpha(j)|^2 + |\mathbf{v}_\beta(j)|^2 \mathbf{v}_\alpha(i)|^2 \nonumber \\
&\quad \ + \ 2 \mathbf{v}_\beta(i) \mathbf{v}_\alpha(i) \mathbf{v}_\beta(j) \mathbf{v}_\alpha(j).
\end{align}
By the spectral correspondence between $X(t)$ and the covariance matrices $X_{\ast}(t) = H(t)^{\ast} H(t)$ and $X_{\ast,+}(t) = H(t) H(t)^{\ast}$, we have
\begin{align}
\sum_{i = 1}^M \ \sum_{j = M+1}^{M+N} \ |\mathbf{v}_\beta(i)|^2 |\mathbf{v}_\alpha(j)|^2 \ = \ \sum_{i = 1}^M \ \sum_{j = M+1}^{M+N} \ |\mathbf{v}_\beta(j)|^2 |\mathbf{v}_\alpha(i)|^2 \ &= \ \frac12, \\
\sum_{i = 1}^M \ \mathbf{v}_\beta(i) \mathbf{v}_\alpha(i) \sum_{j = M+1}^{M+N} \ \mathbf{v}_\beta(j) \mathbf{v}_\alpha(j) \ &= \ 0,
\end{align}
where the second equation follows from orthonormality of different eigenvectors. Thus, we finally deduce
\begin{align}
\mathscr{X}_{\on{drift}}(t) \ = \ \frac{1}{2N} \sum_{\beta \neq \pm \alpha} \ \frac{1}{\lambda_\alpha - \lambda_\beta} \ \d t,
\end{align}
which completes the formal derivation of the square DBM. 

To derive the rectangular DBM, with the same argument as in the derivation of the square DBM, we have
\begin{align}
\d\lambda_\alpha \ = \ \frac{1}{\sqrt{N}} \d B_\alpha(t) \ + \ \frac{1}{2N} \sum_{i = 1}^M \ \sum_{j = M+1}^{M+N} \ \sum_{\beta \neq \pm \alpha} \ \partial_{X_{ij}}^{(2)} \lambda_\alpha \ \d t,
\end{align}
where the Brownian motions $B_\alpha(t)$ are independent. We now expand the drift term as follows:
\begin{align}
\mathscr{X}_{\on{drift}}(t) \ &= \ \frac{1}{2N} \sum_{i = 1}^M \ \sum_{j = M+1}^{M+N} \ \sum_{\beta \neq \pm \alpha} \ \partial_{X_{ij}}^{(2)} \lambda_\alpha \ \d t \\
&= \ \frac{1}{2N} \sum_{\beta \neq \pm \alpha} \ \sum_{i = 1}^M \ \sum_{j = M+1}^{M+N} \ \frac{\mathbf{v}_\beta(i) \mathbf{v}_\alpha(j) + \mathbf{v}_\beta(j) \mathbf{v}_\alpha(i)}{\lambda_\alpha - \lambda_\beta} \left[ \mathbf{v}_\beta(i) \mathbf{v}_\alpha(j) + \mathbf{v}_\beta(j) \mathbf{v}_\alpha(i) \right] \ \d t \\
&= \ \frac{1}{2N} \sum_{\beta \neq \pm \alpha} \ \frac{1}{\lambda_\alpha - \lambda_\beta} \ \d t.
\end{align}
We now note the following identity for any $\mathbf{v}_\beta$ with $\lambda_\beta \in \zeta(X)$, we have
\begin{align}
\sum_{i = 1}^M \ |\mathbf{v}_\beta(i)|^2 \ = \ 1
\end{align}
as well as the following vanishing identity for any index $j > M$:
\begin{align}
\mathbf{v}_\beta(j) \ = \ 0.
\end{align}
This follows from the spectral structure of the linearized covariance matrix $X(t)$; for details, we refer to \cite{Y1}. We now split the sum over eigenvalues $\lambda_\beta$ into those in and not in $\zeta(X)$, respectively:
\begin{align}
\mathscr{X}_{\on{drift}}(t) \ = \ \frac{1}{2N} \sum_{\beta \not\in \zeta(X)}^{(\pm \alpha)} \ \frac{1}{\lambda_\alpha - \lambda_\beta} \ + \ \frac{M-N}{2N \lambda_\alpha}.
\end{align}
Here, we used that any $\lambda \in \zeta(X)$ is equal to 0 and the size of $\zeta(X)$ is equal to $M-N$. Noting that by Assumption \ref{assumption:repulsionfrom0DBM} the eigenvalue $\lambda_\alpha$ is bounded uniformly away from 0, the second term coming from those $\lambda_\beta \in \zeta(X)$ does not diverge, completing the derivation of the rectangular DBM.

To address the Ornstein-Uhlenbeck variant, we note the Ito formula gives us
Proceeding as in the derivation of the square DBM and the rectangular DBM, and using the Ornstein-Uhlenbeck definition of $\d X_{ij}$ as given in \eqref{eq:OUMatrix}, we now have
\begin{align}
\d\lambda_\alpha \ &= \ \frac{1}{\sqrt{N}} \d B_\alpha(t) \ + \ \left( \frac{1}{2N} \sum_{\beta \not\in \zeta(X)}^{(\pm \alpha)} \ \frac{1}{\lambda_\alpha - \lambda_\beta} \ + \ \frac{1}{2N} \sum_{\gamma \in \zeta(X)} \ \frac{1}{\lambda_\alpha - \lambda_\gamma} \right) \ \d t \nonumber \\
&\quad \quad \ - \ \frac12 \sum_{X_{ij} \in X} \ X_{ij} \partial_{X_{ij}} \lambda_\alpha \ \d t,
\end{align}
where the contribution from the additional drift term in \eqref{eq:OUMatrix} is given by the term in the second line. By the first-order perturbation formula for the eigenvalue $\lambda_\alpha$, we see the contribution from this drift term is given by
\begin{align}
\frac12 \sum_{X_{ij} \in X} \ X_{ij} \partial_{X_{ij}} \lambda_\alpha \ &= \ \frac12 \sum_{i, j} \ \mathbf{v}_\alpha(i) X_{ij} \mathbf{v}_{\alpha}(j) \ = \ \frac12 \lambda_\alpha
\end{align}
since the first term on the RHS is exactly equal to the standard dot product of the eigenvector $\mathbf{v}_\alpha$ with $X\mathbf{v}_\alpha = \lambda_\alpha \mathbf{v}_\alpha$. This completes the derivation of Theorem \ref{theorem:DBMDBMOU}.
%
%
%
\section{Short-Range cutoff}
We now exploit the local structure of the DBM equations by introducing a scheme known as short-range approximation. To provide a brief summary, the short-range approximation is composed of the following steps:
\begin{itemize}
\item We begin letting the DBM equations run for a short-time $t_0$ to local equilibrium. At this point, the corresponding Stieltjes transforms of the particles $\{ \lambda_i(t_0) \}$ exhibit a strong local law and rigidity phenomena.
\item We then introduce cutoffs for the interaction terms. This will approximate the diffusion processes $\d \lambda_i(t)$ by a diffusion process blind to eigenvalues beyond a microscopic distance from $\lambda_i(t)$. 
\end{itemize}
Before we give the construction, we briefly motivate it by introducing a short-range approximation for Wigner matrices. To state this result, we refer to \cite{LSY} and \cite{LY} and introduce the DBM equations for Wigner matrices:
\begin{align}
\d z_i(t) \ = \ \sqrt{\frac{2}{N}} \d B_{i, \on{W}}(t) \ + \ \frac{1}{N} \sum_{j \neq i} \ \frac{1}{z_i(t) - z_j(t)} \ \d t. \label{eq:DBMWigner}
\end{align}
Here, the Brownian motions $B_{i, \on{W}}(t)$ are jointly independent with suitable initial data $z_i(0)$ as will be made precise later. We now introduce the short-range approximation result quite informally. This result is Lemma 3.7 in \cite{LSY}.
\begin{prop} \label{prop:cutoffWigner}
Suppose $z_i(t)$ solves \eqref{eq:DBMWigner} with initial data either deterministic with a $(g, G)$-regular potential, or distributed as the eigenvalues of a GOE matrix. Then if $\hat{z}_i(t)$ denotes the solution to the cutoff equation that will be described below for linearized covariance matrices, then the following estimate holds with $(\xi, \nu)$-high probability for constants $\omega_0, \omega_1, \omega_A, \omega_{\ell}$ and time $t_0$ to be determined:
\begin{align}
\sup_{0 \leq t \leq t_1} \ \sup_i \ \left| \hat{z}_i(t_0 + t) - z_i(t_0 + t) \right| \ \leq \ N^{\e} t_1 \left( \frac{N^{\omega_A}}{N^{\omega_0}} + \frac{1}{N^{\omega_{\ell}}} + \frac{1}{\sqrt{N G}} \right).
\end{align}
\end{prop}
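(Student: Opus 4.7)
The approach is to couple $z_i(t)$ and $\hat{z}_i(t)$ so that they share the same driving Brownian motions $B_{i,\on{W}}(t)$. Upon subtracting the two SDEs in this coupling, the stochastic terms cancel exactly, and the difference $w_i(t) := \hat{z}_i(t_0+t) - z_i(t_0+t)$ satisfies a deterministic system of ODEs whose inhomogeneous part is precisely the long-range interaction that the cutoff suppresses. The plan then proceeds in four steps.

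First, I would run the DBM \eqref{eq:DBMWigner} until the short equilibration time $t_0$ to guarantee that $\{z_i(t_0)\}$ satisfies a strong local law and optimal rigidity. For deterministic $(g,G)$-regular initial data this is the Wigner analogue of Theorem \ref{theorem:optimalrigiditystronglocallawdeformedmatrices} (see \cite{LSY}); for GOE initial data it is classical. Extending rigidity to the entire interval $[t_0, t_0 + t_1]$ follows from a standard stochastic continuity argument, analogous to the one used at the end of the proof of Theorem \ref{theorem:stronglocallawstronglocallawdeformedmatrices}.

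Second, denoting by $J_i$ the cutoff index set retained near $i$ and by $J_i^c$ its complement, the equation for $w_i$ has the schematic form
\begin{align*}
\partial_t w_i \ = \ \frac{1}{N} \sum_{j \in J_i \setminus \{i\}} \ \frac{w_j - w_i}{(\hat{z}_i - \hat{z}_j)(z_i - z_j)} \ - \ \frac{1}{N} \sum_{j \in J_i^c} \ \frac{1}{z_i - z_j}.
\end{align*}
The first sum defines a symmetric positive kernel and thus an $\ell^\infty$-contractive semigroup; the second is the inhomogeneous source to be controlled. Third, I would bound this source term by rigidity: for $|i - j| \geq N^{\omega_A}$, the gap $|z_i - z_j|$ is comparable to $|\gamma_i - \gamma_j|$ up to fluctuations of size $N^{-1 + \e}$, so $z_j$ may be replaced by its classical location $\gamma_j$ (with the difference reorganized through an Abel-type summation against the classical gaps). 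The three contributions in the stated bound correspond respectively to (a) the cutoff boundary, at index scale $N^{\omega_A}$ over which the linear statistic concentrates to size $N^{\omega_A}/N^{\omega_0}$; (b) the intermediate microscopic scale $N^{\omega_{\ell}}$ at which the rigidity correction saturates; and (c) the boundary of the energy window $\mathscr{I}_{E_0, G}$, where the density $\varrho_{\on{fc},t}$ and rigidity degrade, giving $1/\sqrt{NG}$ after using the square-root behavior of the density to absorb the edge indices.

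Fourth, I would apply the $\ell^\infty$ maximum principle to the linearized equation. Using that the linear generator is an $\ell^\infty$-contraction and applying Duhamel's formula,
\begin{align*}
\| w(t_0 + t) \|_{\infty} \ \leq \ \int_0^{t} \ \| \on{source}(t_0 + s) \|_{\infty} \ \d s \ \leq \ N^{\e} t_1 \left( \frac{N^{\omega_A}}{N^{\omega_0}} + \frac{1}{N^{\omega_{\ell}}} + \frac{1}{\sqrt{NG}} \right),
\end{align*}
uniformly in $i$, on the high-probability rigidity event. The main obstacle is step three: one must carefully rearrange the long-range sum $N^{-1} \sum_{j \in J_i^c} 1/(z_i - z_j)$ so that fluctuations $z_j - \gamma_j$ enter only through the rigidity bound rather than accumulating over many indices. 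This is delicate near the boundary of the energy window $\mathscr{I}_{E_0, G}$, where one needs the edge contribution to be only logarithmic in $N$ to obtain the $(NG)^{-1/2}$ rate rather than a worse power.
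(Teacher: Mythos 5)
The paper does not itself prove Proposition \ref{prop:cutoffWigner}; it cites Lemma 3.7 of \cite{LSY}. However, it does prove the parallel Proposition \ref{prop:shortrangeapproximation} for the linearized covariance DBM, and your proposal is essentially that argument: couple the full and cutoff dynamics through shared driving Brownian motions, observe that the difference $w_i$ evolves under a linear parabolic system whose generator is an $\ell^{\infty}$-contraction, apply Duhamel with vanishing initial data, and bound the $\ell^{\infty}$-norm of the source term via the strong local law and rigidity. Your structure matches the intended route.

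One remark on the accounting of the three error contributions. In the paper's proof of Proposition \ref{prop:shortrangeapproximation}, the $N^{\omega_A}/N^{\omega_0}$ piece does not arise from concentration of a linear statistic at the cutoff boundary; it comes from $\mathscr{F}_3$, which compares $\on{Re}\,m_{\on{fc},t}$ at $\gamma_{i,t}$ to its value at $\gamma_E(t)$, using $|\gamma_{i,t}-\gamma_E(t)|\lesssim N^{\omega_A-1}$ together with the derivative bound $|\partial_z m_{\on{fc},t_0+t}|\lesssim t_0^{-1}$ from Lemma 3.3 of \cite{LSY}. And the $(NG)^{-1/2}$ term does not come from square-root density behaviour absorbing edge indices: the paper explicitly notes that this term is invisible under the high-regularity assumption $G\asymp 1$ used in its own proof, and appears only for less regular $V$ from the analogue of $\mathscr{F}_1$, where the strong local law and rigidity must be applied on a window of width $G$ rather than $O(1)$. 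These are misattributions in the bookkeeping rather than structural defects; your argument is otherwise aligned with both the paper's proof of the analogous result and the strategy of \cite{LSY}.
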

Before we proceed, we first note the estimate in Proposition \ref{prop:cutoffWigner} includes a time-shift by $t_0$ as to allow for rigidity to hold. We now begin the short-range approximation scheme by introducing the following parameters:
\begin{align}
0 \ < \ \omega_1 \ < \ \omega_{\ell} \ < \ \omega_{A} \ < \ \frac{\omega_0}{2}.
\end{align}
%
\subsection{Time-Shift and Regularization}
We now renormalize the the DBM equations in time. To this end we define two time scales: the natural time scale of the DBM flow and the scale for which we allow the DBM to evolve afterwards.
\begin{align}
t_0 \ := \ N^{-1 + \omega_0}, \quad t_1 \ := \ N^{-1 + \omega_1}.
\end{align}
This motivates the following time-scale re-shift:
\begin{align}
\d z_i(t) \ := \ \d \lambda_i(t_0 + t).
\end{align}
As alluded to previously, we may now assume the particles $z_i(t)$ exhibit a rigidity phenomenon.
\subsection{Short-Range Cutoff}
We now introduce the cutoff in the drift/interaction terms. To this end, we first define the following which will help regularize the DBM. For a fixed energy $E$ contained in the interior of the support of the free convolution, we define the following classical location minimizing the distance from $E$:
\begin{align}
\gamma_E(t) \ := \ \underset{\gamma_{i,t}}{\on{arginf}} \ \left| E - \gamma_{i,t} \right|.
\end{align}
For notational convenience, we let $k(E)$ denote the index of the above minimizer, so that we have 
\begin{align}
\gamma_{E}(t) \ = \ \gamma_{k(E), t}.
\end{align}
With this, we define the following index set collecting eigenvalues for which we approximate by only short-range interactions:
\begin{align}
\mathscr{I}_{E, \omega_A} \ := \ \left\{ j: \ |j - k(E)| < N^{\omega_A} \right\}.
\end{align}
We also approximate the interval $qG$ on which the initial data is regular by classical locations of the free convolution law:
\begin{align}
\mathscr{C}_q \ := \ \left\{ j: \ \gamma_j \in \mathscr{I}_{E, qG} \right\}.
\end{align}
We now define the short-range cutoff for an individual eigenvalue $z_{i}(t)$ by collecting nearby eigenvalues via the set
\begin{align}
\mathscr{A}_{q} \ := \ \left\{ (i,j): \ |i - j| \ \leq \ N^{\omega_{\ell}} \right\} \ \bigcup \ \left\{ (i,j): \ ij > 0, \ i, j \not\in \mathscr{C}_{q} \right\}.
\end{align}
We briefly remark that the second set defining $\mathscr{A}_{q}$ serves to regularize the dynamics for eigenvalues outside the range of regularity for the potential $V$ defined by $\mathscr{I}_{E, qG}$. This will be made precise in the derivation of the short-range approximation. 

Before we introduce the short-range cutoffs, we establish the following notation to state the short-range cutoff more conveniently.
\begin{notation}
For a fixed index $i$, define the following summation operators:
\begin{align}
\sum_{j}^{\mathscr{A}_{q}(i)} \ := \ \sum_{j: (i,j) \in \mathscr{A}_{q}}, \quad \sum_{j}^{\mathscr{A}_{q}(i)^{C}} \ := \ \sum_{j: \ (i,j) \in \mathscr{A}_{q}^{C}}.
\end{align}
Here, the superscript $C$ denotes a set-theoretic complement.
\end{notation}
We now apply a deterministic shift to the DBM system as follows:
\begin{align}
\wt{z}_i(t) \ := \ z_i(t) \ - \ \gamma_E(t).
\end{align}
This deterministic shift is another regularization operator as it will be important in controlling the main error term in the short-range approximation. We note this shift is both deterministic and the same for all indices, so that local eigenvalue statistics should be preserved. By the inverse function theorem and differentiating the quantile representation of $\gamma_E(t)$, we compute the derivative of $\gamma_E(t)$ as follows:
\begin{align}
\partial_t \gamma_E(t) \ = \ - \on{Re} \left( m_{\on{fc},t}(\gamma_{E}(t)) \right) \ - \ \frac12 \gamma_E(t). \label{eq:dtclassicallocation}
\end{align}
The RHS in the derivative identity above is understood in the principal value sense; this is where we require $E$ lives in the interior of the support or completely separated from the support of the free convolution law. Thus, by the Ito formula we immediately deduce the following perturbed DBM equations for the shifted eigenvalues:
\begin{align}
\d \wt{z}_{i}(t) \ = \ \frac{1}{\sqrt{N}} \d B_i(t) \ + \ \left( \frac{1}{2N} \sum_{j \not\in \zeta_t(X)}^{(\pm i)} \ \frac{1}{\wt{z}_i(t) - \wt{z}_j(t)} \ + \ \frac{M-N}{N \wt{z}_i(t)} \ + \ \on{Re} \left( m_{\on{fc},t}(\gamma_{E}(t)) \right) \ + \ \frac12 \gamma_{E}(t) \right) \ \d t.
\end{align}
%
\subsection{Short-Range Equations}
We now define the short-range DBM equations as follows. For those indices $i \in \mathscr{I}_{E, \omega_A}$, we define the following short-range interaction equations:
\begin{align}
\d \hat{z}_i(t) \ := \ \frac{1}{\sqrt{N}} \d B_i(t) \ + \ \frac{1}{2N} \sum_{j}^{\mathscr{A}_{q}(i)} \ \frac{1}{\hat{z}_i(t) - \hat{z}_j(t)} \ \d t.
\end{align}
For those indices $i$ outside the interval $\mathscr{I}_{E, \omega_A}$, we define the following dynamics instead:
\begin{align}
\d \hat{z}_i(t) \ := \ \frac{1}{\sqrt{N}} &\d B_i(t) \nonumber \\
&\quad \ + \ \left( \frac{1}{2N} \sum_{j}^{\mathscr{A}_q(i)} \ \frac{1}{\hat{z}_i(t) - \hat{z}_j(t)} \ + \ \frac{1}{2N} \sum_{j}^{\mathscr{A}_{q}(i)^C} \ \frac{1}{\wt{z}_i(t) - \wt{z}_j(t)} \ + \ \on{Re} \left( m_{\on{fc},t}(\gamma_{E}(t)) \right) \ + \ \frac12 \gamma_{E}(t) \right) \ \d t.
\end{align}
Although not explicit, the summations in the drift term avoid the index $i$. To give initial conditions, for all indices we stipulate 
\begin{align}
\hat{z}_i(0) \ = \ \wt{z}_i(0) \ = \ \lambda_i(t_0) - \gamma_E(t_0).
\end{align}
We now derive and bound the error term in approximating the true DBM $\wt{z}_i(t)$ by the short-range dynamics $\hat{z}_i$. The following result shows that the error term is given by $N^{-1 - \delta}$ for small $\delta > 0$. This error is smaller than the scaling for eigenvalue gaps and thus suggests the local eigenvalue statistics for the true DBM and short-range DBM coincide in the limit of large $N$. We return to this point later, however, and proceed with the short-range approximation.
\begin{prop} \label{prop:shortrangeapproximation}
In the setting of the short-range approximation, for any fixed $\e > 0$ we have the following estimate with $(\xi, \nu)$-high probability for sufficiently large $N \gg 1$:
\begin{align}
\sup_{t \in [0, t_1]} \ \sup_{i} \ \left| \hat{z}_i(t) - \wt{z}_i(t) \right| \ \leq \ N^{\e} t_1 \left( \frac{N^{\omega_A}}{N^{\omega_0}} \ + \ \frac{1}{N^{\omega_{\ell}}} \ + \ \frac{1}{\sqrt{NG}} \right). \label{eq:shortrangeapproximation}
\end{align}
Here, we recall $G$ is the regularity parameter of the initial data, i.e. the scale on which rigidity holds.
\end{prop}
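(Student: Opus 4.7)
Define $u_i(t) := \hat{z}_i(t) - \wt{z}_i(t)$, so that $u_i(0) = 0$. Upon subtracting the two SDEs the Brownian increments cancel identically, and the short-range interaction difference can be rewritten as
\[
\frac{1}{2N} \sum_{j}^{\mathscr{A}_{q}(i)} \left( \frac{1}{\hat{z}_i - \hat{z}_j} - \frac{1}{\wt{z}_i - \wt{z}_j} \right) \ = \ \frac{1}{2N} \sum_{j}^{\mathscr{A}_q(i)} \frac{u_j - u_i}{(\hat{z}_i - \hat{z}_j)(\wt{z}_i - \wt{z}_j)},
\]
so that $u_i$ obeys a closed ODE system of the form $\partial_t u_i = (\mathscr{B}(t) u)_i - F_i(t)$. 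Here $\mathscr{B}(t)$ is the time-dependent discrete operator with the short-range kernel, while $F_i(t)$ collects the pieces of the $\wt{z}_i$-dynamics discarded by the cutoff equation, namely the long-range sum $(2N)^{-1}\sum_{j}^{\mathscr{A}_{q}(i)^C}(\wt{z}_i - \wt{z}_j)^{-1}$, the rectangular contribution, and, for $i \in \mathscr{I}_{E, \omega_A}$, the deterministic drift $\on{Re} m_{\on{fc},t}(\gamma_E) + \tfrac12 \gamma_E$ produced by the shift. By the rigidity estimate of Theorem \ref{theorem:optimalrigiditystronglocallawdeformedmatrices} combined with a short continuity bootstrap, each short-range pair retains its order throughout $[0, t_1]$, so $\mathscr{B}(t)$ has nonnegative off-diagonal entries and vanishing row sums. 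Its semigroup is therefore an $L^{\infty}$-contraction, and Duhamel's principle reduces the proposition to the pointwise bound
\[
\sup_{0 \leq t \leq t_1} \sup_i |u_i(t)| \ \leq \ t_1 \cdot \sup_{0 \leq s \leq t_1} \sup_i |F_i(s)|.
\]

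The heart of the argument is thus bounding $|F_i|$ by the parenthesized expression in \eqref{eq:shortrangeapproximation}. For $i \in \mathscr{I}_{E, \omega_A}$ the strategy is to use rigidity to replace each $\wt{z}_j$ appearing in the long-range sum by its classical location $\gamma_{j,t} - \gamma_E(t)$, and then to pass from the resulting deterministic Riemann sum to a principal-value integral against $\varrho_{\on{fc},t}$. By the defining fixed-point equation for $m_{\on{fc},t}$ together with the identity \eqref{eq:dtclassicallocation} for $\partial_t \gamma_E$, this integral exactly cancels the deterministic drift and rectangular contributions of $F_i$ at the reference point $\wt{z}_i = 0$. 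The residual decomposes into three pieces tracked in the statement: the rigidity replacement error at the inner boundary of $\mathscr{A}_q(i)^C$ contributes $N^{\omega_A - \omega_0}$; the Riemann-sum quadrature error on the short-range window contributes $N^{-\omega_\ell}$; and the imperfect regularity of $\varrho_{\on{fc},t}$ near the edges of $\mathscr{I}_{E_0, qG}$ contributes $(NG)^{-1/2}$. For $i \notin \mathscr{I}_{E, \omega_A}$, the cutoff equation retains the full long-range drift of $\wt{z}_i$ verbatim, so $F_i$ collapses to the analogous rigidity-based error on the short-range window alone, which satisfies the same bound.

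The main obstacle will be executing the Riemann-sum-to-integral passage with the three sharp error rates while simultaneously controlling the rectangular singularity inside $F_i$; Assumption \ref{assumption:repulsionfrom0DBM} together with the requirement that $E$ lies in the interior of the support of $\varrho_{\on{fc},t}$ is precisely what keeps the rectangular term smooth and the principal-value integral well-defined, so that the cancellation identity described above can be quantified. The overall scheme mirrors Lemma 3.7 of \cite{LSY} and Proposition \ref{prop:cutoffWigner}, with the extra bookkeeping needed to absorb the covariance-specific drift into the cancellation against the classical-location evolution \eqref{eq:dtclassicallocation}.
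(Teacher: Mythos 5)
Your skeleton matches the paper's: defining $w_i = \hat z_i - \wt z_i$, observing that the Brownian increments cancel, recasting the short-range difference as a jump-process kernel $\mathscr{B}(t)$ with a forcing term, invoking the $\ell^\infty$-contraction of its semigroup via Duhamel, and thereby reducing the proposition to a pointwise bound on the forcing. The key cancellation of the long-range Riemann sum against $\on{Re}\,m_{\on{fc},t}(\gamma_E(t))$ is also correctly identified. However, the bookkeeping attached to that cancellation is garbled in ways that would not survive a detailed execution.

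First, the $N^{\omega_A - \omega_0}$ term does not come from rigidity replacement at the inner boundary of $\mathscr{A}_q(i)^C$. At that boundary the gap is $\sim N^{\omega_\ell - 1}$ and the rigidity error is $\sim N^{-1+\e}$, which, after summing the sensitivity of $1/(\wt z_i - \wt z_j)$ over the near terms, produces $N^{-\omega_\ell + \e}$ — this is the actual source of your second term, not your first. The $N^{\omega_A - \omega_0}$ contribution comes instead from moving the evaluation point of the free convolution from $\gamma_E(t)$ to $\gamma_{i,t}$: the displacement $|\gamma_E(t) - \gamma_{i,t}| \lesssim N^{\omega_A - 1}$ combined with the derivative bound $|\partial_z m_{\on{fc},t_0 + t}| \lesssim t_0^{-1} = N^{1 - \omega_0}$ from \eqref{eq:derivativeST}. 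Second, there is no ``Riemann-sum quadrature error on the short-range window'': the short-range sums at $\hat z$ and $\wt z$ are the two halves of a difference quotient that get absorbed entirely into the kernel $\mathscr{B}_{ij}$, and contribute nothing to the forcing $F_i$. What does remain from the short-range interval is the residual principal-value integral $\int_{\mathscr{I}_{E,t}(i)} \varrho_{\on{fc},t}(x)/(\gamma_{i,t} - x)\,\d x$ left over because the cancellation is \emph{not} exact (the window is only approximately symmetric about $\gamma_{i,t}$); this is bounded by $N^{\omega_\ell - \omega_0}$ using the near-symmetry of classical locations and a $C^1$ bound on $\varrho_{\on{fc},t}$, and is dominated by $N^{\omega_A - \omega_0}$ since $\omega_\ell < \omega_A$. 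Third, for $i \notin \mathscr{I}_{E, \omega_A}$ the forcing $F_i$ does not ``collapse to an analogous rigidity-based error'' — it vanishes identically, because the non-short-range equation retains the full long-range drift verbatim at $\wt z$, so every piece outside the short-range difference quotient cancels. Your attribution of $(NG)^{-1/2}$ to degraded regularity near the edges of $\mathscr{I}_{E_0, qG}$ is consistent with how that term arises in the less-regular case, though the paper sidesteps it by restricting to $G \asymp 1$ and deferring the general case to \cite{LSY}.
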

Before we proceed with the proof of Proposition \ref{prop:shortrangeapproximation}, we discuss both the result itself and its consequences. First, we deduce the following consequence which follows from Proposition \ref{prop:shortrangeapproximation} combined with Proposition \ref{prop:cutoffWigner}. The result also gives a rough idea of the proof of gap universality; we return to this idea shortly.
\begin{corollary} \label{corollary:covarianceWignercomparison}
With $(\xi, \nu)$-high probability for $N \gg 1$ sufficiently large, we have the following estimate:
\begin{align}
\sup_{t \in [0, t_1]} \ \sup_{i \in \mathscr{I}_{E, \omega_A}} \ \left| \lambda_i(t_0 + t) \ - \ \lambda_{W, i}(t_0 + t) \right| \ < \ N^{-1 - \delta}, \label{eq:covarianceWignercomparison}
\end{align}
where $\delta > 0$ is a small parameter depending on the parameters $\omega_0, \omega_1, \omega_A, \omega_{\ell}$. 
\end{corollary}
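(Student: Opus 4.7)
The strategy is a triangle-inequality argument that passes through the short-range cutoff dynamics on both the linearized covariance and Wigner sides, exploiting the fact that the two cutoffs have essentially identical functional form in the bulk. Writing $\wt z_i(t) = \lambda_i(t_0+t)-\gamma_E(t)$ and letting $\hat z_i$ and $\hat z_{W,i}$ denote the short-range approximants to the covariance and Wigner flows respectively, I decompose
\begin{align*}
|\lambda_i(t_0+t)-\lambda_{W,i}(t_0+t)|\ \leq\ |\wt z_i(t)-\hat z_i(t)|\ +\ |\hat z_i(t)-\hat z_{W,i}(t)|\ +\ |\hat z_{W,i}(t)-\wt z_{W,i}(t)|,
\end{align*}
after matching the classical-location shifts on the two sides. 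The first and third differences are immediately controlled by Proposition~\ref{prop:shortrangeapproximation} and Proposition~\ref{prop:cutoffWigner} respectively, each by $N^{\e} t_1\bigl(N^{\omega_A-\omega_0}+N^{-\omega_\ell}+(NG)^{-1/2}\bigr)$ with $(\xi,\nu)$-high probability.

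The heart of the argument is the middle term, where I use the observation emphasized in the introduction: for indices $i\in\mathscr{I}_{E,\omega_A}$ the short-range SDE reads
\begin{align*}
\d\hat z_i(t)\ =\ \frac{1}{\sqrt{N}}\,\d B_i(t)\ +\ \frac{1}{2N}\sum_{j}^{\mathscr{A}_q(i)}\ \frac{1}{\hat z_i(t)-\hat z_j(t)}\ \d t,
\end{align*}
which, up to the trivial rescaling absorbed in the definition of $t_1$, is identical in form to the short-range cutoff of the Wigner DBM \eqref{eq:DBMWigner}. In particular, neither the reflected-partner term $\lambda_{-j}=-\lambda_j$ nor the singular contribution $(M-N)/\lambda_\alpha$ from $\zeta_t(X)$ appears, because $|i-k(E)|<N^{\omega_A}$ with $E$ in the interior of the support of $\varrho$ keeps $\hat z_i$ a macroscopic distance from $0$ (via $(g,G)$-regularity and the rigidity estimate from Theorem~\ref{theorem:optimalrigiditystronglocallawdeformedmatrices}), so those terms are excluded from $\mathscr{A}_q(i)$. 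Driving the two short-range systems by a common family of Brownian motions and matching initial data $\hat z_i(0)=\hat z_{W,i}(0)=\lambda_i(t_0)-\gamma_E(t_0)$ — which is possible since rigidity places both ensembles within $\varphi^{c_4\xi}/N$ of the same free-convolution classical location — pathwise uniqueness for these smooth, ordered SDEs yields $\hat z_i(t)=\hat z_{W,i}(t)$ for all $t\in[0,t_1]$ on the high-probability event.

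Combining the three estimates,
\begin{align*}
\sup_{t\in[0,t_1]}\sup_{i\in\mathscr{I}_{E,\omega_A}}\ |\lambda_i(t_0+t)-\lambda_{W,i}(t_0+t)|\ \leq\ 2N^{\e}\,t_1\left(\frac{N^{\omega_A}}{N^{\omega_0}}+\frac{1}{N^{\omega_\ell}}+\frac{1}{\sqrt{NG}}\right),
\end{align*}
and since $t_1=N^{-1+\omega_1}$ with the imposed hierarchy $\omega_1<\omega_\ell<\omega_A<\omega_0/2$, each summand is of order $N^{-\omega_\ast}$ for some strictly positive $\omega_\ast=\omega_\ast(\omega_0,\omega_1,\omega_A,\omega_\ell)$, yielding the claimed bound $N^{-1-\delta}$ once $\e$ is chosen smaller than $\omega_\ast$.

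\textbf{Main obstacle.} The delicate point is not any single error bound but rather the coupling step. Outside the bulk window $\mathscr{I}_{E,\omega_A}$ the short-range dynamics for $\hat z_i$ retain the long-range mean-field contribution $\operatorname{Re}m_{\on{fc},t}(\gamma_E(t))+\tfrac12\gamma_E(t)$ and the pseudo-symmetric spectral structure, and one must show that these do not propagate into the bulk indices at scale $N^{-1-\delta}$ on the time window $[0,t_1]$. This is the finite-speed-of-propagation feature of short-range DBM inherited from \cite{LSY}, and the condition $\omega_1<\omega_\ell$ is precisely what makes the propagation strictly contained within the short-range index set; verifying this quantitatively for the linearized covariance flow, together with a careful accounting of the classical-location discrepancy between the free convolution $\varrho_{\on{fc},t}$ and the semicircle $\varrho_{\on{sc}}$, is what needs the most care.
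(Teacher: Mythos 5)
Your proposal matches the paper's argument. The paper's proof (given in the three sentences immediately following the Corollary) is exactly the triangle inequality through the short-range cutoffs: it records that the Wigner DBM for a $2N\times 2N$ matrix has diffusion coefficient $\sqrt{2/(2N)} = 1/\sqrt{N}$ and drift coefficient $1/(2N)$, observes that after imposing the respective short-range cutoffs the two systems $\hat z_i$ and $\hat z_{W,i}$ solve the same SDEs, and then concludes from Proposition~\ref{prop:shortrangeapproximation} and Proposition~\ref{prop:cutoffWigner} that the full-range eigenvalues are within $N^{-1-\delta}$. This is precisely your decomposition $|\wt z_i - \hat z_i| + |\hat z_i - \hat z_{W,i}| + |\hat z_{W,i} - \wt z_{W,i}|$ with the middle term vanishing by coupling. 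One remark on a detail in your write-up: the worry about a ``classical-location discrepancy between $\varrho_{\on{fc},t}$ and $\varrho_{\on{sc}}$'' is unnecessary here, because the Wigner comparison flow is initialized from the same $(g,G)$-regular data $(V,V^{\ast})$ as the covariance flow, not from the GOE; consequently the self-consistent equations defining the two free convolutions coincide and the classical-location shifts $\gamma_E(t)$ agree exactly (the semicircle enters only later, in Section~6, via the coupling to the auxiliary process $\hat\nu$). The finite-speed-of-propagation point you flag in your ``Main obstacle'' paragraph — the short-range drift of a bulk index $i$ near the boundary of $\mathscr{I}_{E,\omega_A}$ can couple to indices $j$ for which the two short-range SDEs genuinely differ — is indeed not explicitly handled in the paper's three-sentence proof either, and your hierarchy $\omega_1 < \omega_\ell < \omega_A$ is the right reason it is benign; being explicit about that step is an improvement over what the paper writes.
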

Indeed, Corollary \ref{corollary:covarianceWignercomparison} follows from the following eigenvalue SDEs for Wigner matrices of dimension $2N$ along short-time matrix-valued Brownian motion flows:
\begin{align}
\d \lambda_{W,i}(t) \ = \ \sqrt{\frac{2}{2N}} \d B_i(t) \ + \ \frac{1}{2N} \sum_{j}^{(i)} \ \frac{1}{\lambda_{W,i}(t) - \lambda_{W,j}(t)} \ \d t.
\end{align}
Here, the equation gives the honest dynamics without the short-range cutoff. Thus, with the respective cutoffs, the eigenvalues $\lambda_{W,i}$ and $\lambda_i$ solve the same SDEs. Thus, we obtain the estimate \eqref{eq:covarianceWignercomparison} with the bound on the RHS coming from the short-range approximation bounds in Proposition \ref{prop:cutoffWigner} and Proposition \ref{prop:shortrangeapproximation}, respectively. 

Moreover, in the same spirit we deduce the following reduction of the proof of gap universality, which states that it suffices to prove gap universality for the short-range dynamics.
\begin{corollary} \label{corollary:shortrangereduction}
In the context of Proposition \ref{prop:cutoffWigner} and Proposition \ref{prop:shortrangeapproximation}, suppose the gap universality estimate holds for the short-range eigenvalues $\hat{\lambda}$. Then the gap universality estimate holds for the full-range bulk eigenvalues $\lambda$.
\end{corollary}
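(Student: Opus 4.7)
The plan is a short coupling argument. The key observation is that by Proposition \ref{prop:shortrangeapproximation}, the full-range eigenvalues $\lambda_i$ and their short-range counterparts $\hat{\lambda}_i$ differ by at most $N^{-1-\delta}$ for some $\delta > 0$, provided the parameters $0 < \omega_1 < \omega_\ell < \omega_A < \omega_0/2$ are chosen small enough that $N^{\omega_1}(N^{\omega_A - \omega_0} + N^{-\omega_\ell} + (NG)^{-1/2}) \leq N^{-\delta}$. Since the test functions in Theorem \ref{theorem:universalgaps} lie in $C_c^\infty(\R^n)$ and are therefore $C^1$-Lipschitz, this pointwise agreement of eigenvalues propagates directly to the expectation of $O$ composed with rescaled gap vectors.

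More concretely, I would first note that the deterministic shift $\gamma_E(t)$ used to pass from $z_i = \lambda_i(t_0 + \cdot)$ to $\wt{z}_i$ is the same for every index, so it cancels in any difference of eigenvalues. Defining $\hat{\lambda}_i(t_0 + t) := \hat{z}_i(t) + \gamma_E(t)$, the short-range estimate \eqref{eq:shortrangeapproximation} yields
\begin{align}
\sup_{t \in [0, t_1]} \ \sup_{i \in \mathscr{I}_{E, \omega_A}} \ \sup_{k} \ N \varrho_{\on{fc},t}(\gamma_{i,t}) \left| (\hat{\lambda}_{i+i_k} - \hat{\lambda}_i)(t_0+t) - (\lambda_{i+i_k} - \lambda_i)(t_0+t) \right| \ \leq \ N^{-\delta}
\end{align}
on a $(\xi, \nu)$-high probability event, using $\varrho_{\on{fc},t}(\gamma_{i,t}) = O(1)$ in the bulk along with the hypothesis $\gamma_{i,t} \in \mathscr{I}_{E_0, G/2}$.

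The mean value theorem applied to $O \in C_c^\infty(\R^n)$ then bounds the difference of the two evaluations of $O$ appearing in Theorem \ref{theorem:universalgaps} by $O(n \|\nabla O\|_\infty N^{-\delta})$ on this event, and by $\|O\|_\infty$ off this event, which contributes at most $\|O\|_\infty e^{-\nu \log^{\xi} N}$ to the expectation. Taking expectations and combining with the assumed gap universality of $\hat{\lambda}$ against GOE gaps, the triangle inequality closes the comparison and yields the claimed estimate for $\lambda$ with a (possibly reduced) polynomial rate $c_\omega$.

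The only point requiring care is checking that the parameter regime $0 < \omega_1 < \omega_\ell < \omega_A < \omega_0/2$ admits a simultaneous choice for which the scaled error above is polynomially small and for which the short-range universality hypothesis applies; this is a bookkeeping constraint rather than a substantive analytic obstacle, resolved by taking $\omega_1$ sufficiently small relative to the remaining parameters and to the regularity scale $G$. No additional probabilistic input beyond the $(\xi,\nu)$-high-probability events already carried by Proposition \ref{prop:shortrangeapproximation} is needed.
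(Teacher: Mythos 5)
Your argument is correct and matches the paper's intended (and only lightly sketched) reasoning: the $N^{-1-\delta}$ eigenvalue coupling from Proposition~\ref{prop:shortrangeapproximation} rescales to an $N^{-\delta}$ error on the gap scale, a first-order (Lipschitz) estimate on $O \in C_c^\infty$ transfers this to the test-function expectations on the $(\xi,\nu)$-high-probability event, the exponentially small complement is absorbed using $\|O\|_\infty$, and the triangle inequality with the assumed short-range universality finishes the job. The only substantive checks you flag --- that the parameter hierarchy $0 < \omega_1 < \omega_\ell < \omega_A < \omega_0/2$ makes $N^{\e + \omega_1}\bigl(N^{\omega_A - \omega_0} + N^{-\omega_\ell} + (NG)^{-1/2}\bigr)$ polynomially small, and that the test-function indices $i_k \leq N^{c_\omega}$ stay inside the short-range window --- are exactly the bookkeeping the paper relies on, so this is the same proof.
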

We now record the following corollary of Proposition \ref{prop:shortrangeapproximation} which provides a weak form of level repulsion. This is a consequence of comparing the short-range DBM of linearized covariance matrices to the short-range DBM of Wigner matrices, i.e. \eqref{eq:covarianceWignercomparison}, as well as level repulsion estimates for Wigner matrices. Although this estimate is too weak for us to use in this thesis, we record it for possible future use.
\begin{corollary}
Suppose $\lambda_{i}(t)$ solves the DBM equations for linearized covariance matrices. Then with $(\xi, \nu)$-high probability, we have the following weak level repulsion for some small $\delta > 0$ and any index $i$ such that the classical location $\gamma_{i,t}$ is bounded away independent of $N$ from the boundary of the support of the free convolution $\varrho_{\on{fc},t}$:
\begin{align}
\left| \lambda_{i+1}(t) - \lambda_{i}(t) \right| \ \leq \ N^{-1 + \e}.
\end{align}
\end{corollary}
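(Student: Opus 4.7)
The strategy is a clean triangle-inequality comparison, using Corollary \ref{corollary:covarianceWignercomparison} to transfer the gap bound from Wigner matrices to linearized covariance matrices. The plan is to sandwich $|\lambda_{i+1}(t) - \lambda_i(t)|$ between the corresponding Wigner gap $|\lambda_{W,i+1}(t) - \lambda_{W,i}(t)|$ and the pointwise comparison error coming from the short-range approximation. Since the short-range comparison error is much smaller than $N^{-1}$ and the Wigner gap is controlled by optimal rigidity, the desired bound $N^{-1+\e}$ will fall out.

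First I would fix an index $i$ such that $\gamma_{i,t}$ lies at an $N$-independent distance from the boundary of $\on{supp}(\varrho_{\on{fc},t})$ and choose $E := \gamma_{i,t}$ together with parameters $\omega_0, \omega_1, \omega_A, \omega_\ell$ as in Proposition \ref{prop:shortrangeapproximation}, so that $i \in \mathscr{I}_{E, \omega_A}$. The bulk condition on $\gamma_{i,t}$ guarantees $i+1 \in \mathscr{I}_{E, \omega_A}$ as well (after possibly enlarging $\omega_A$) and, crucially, that $\varrho_{\on{fc},t}(\gamma_{i,t}) \asymp 1$ uniformly in $N$. By Corollary \ref{corollary:covarianceWignercomparison}, there exists $\delta > 0$ such that with $(\xi,\nu)$-high probability
\begin{equation*}
\sup_{i' \in \{i, i+1\}} \left| \lambda_{i'}(t_0 + t) - \lambda_{W, i'}(t_0 + t) \right| \ \leq \ N^{-1 - \delta},
\end{equation*}
where $\{\lambda_{W, j}(s)\}$ are the eigenvalues of the coupled Wigner-type DBM flow of dimension $2N$ appearing in Corollary \ref{corollary:covarianceWignercomparison}, run from initial data whose classical locations match those of the linearized covariance ensemble.

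Next I would invoke optimal bulk rigidity for Wigner matrices along the DBM flow (as in \cite{LY}): since $\gamma_{i,t}$ lies in the bulk and the classical spacing satisfies $|\gamma_{i+1,t} - \gamma_{i,t}| \lesssim N^{-1}$, rigidity at scale $N^{-1+\e/2}$ gives
\begin{equation*}
\left| \lambda_{W, i+1}(t_0 + t) - \lambda_{W, i}(t_0 + t) \right| \ \leq \ |\lambda_{W, i+1} - \gamma_{i+1,t}| + |\gamma_{i+1,t} - \gamma_{i,t}| + |\gamma_{i,t} - \lambda_{W, i}| \ \leq \ N^{-1 + \e/2}
\end{equation*}
with $(\xi,\nu)$-high probability. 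Assembling via the triangle inequality,
\begin{equation*}
\left| \lambda_{i+1}(t) - \lambda_{i}(t) \right| \ \leq \ 2 N^{-1-\delta} + N^{-1+\e/2} \ \leq \ N^{-1 + \e}
\end{equation*}
for sufficiently large $N$, which is the desired estimate on the high-probability event obtained by intersecting the two preceding events.

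The only genuine obstacle is bookkeeping: one must check that the parameter choices $(\omega_0, \omega_1, \omega_A, \omega_\ell)$ underlying Proposition \ref{prop:cutoffWigner} and Proposition \ref{prop:shortrangeapproximation} can be made compatible with Corollary \ref{corollary:covarianceWignercomparison} so that the comparison exponent $\delta$ is strictly positive, and that the bulk hypothesis on $\gamma_{i,t}$ is strong enough to place both indices $i$ and $i+1$ inside the short-range window $\mathscr{I}_{E, \omega_A}$ as well as inside the regime where the Wigner DBM enjoys bulk rigidity at the optimal scale. Once these are secured the inequality itself is immediate. No direct dynamical analysis of the covariance DBM is needed at the level of the gap, which is precisely why the resulting estimate is too weak to plug into the Landon--Yau machinery but is obtained very cheaply from the comparison.
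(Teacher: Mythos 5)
Your triangle-inequality argument does establish the inequality as written, but it diverges from the paper's stated reasoning, and the divergence exposes what is very likely a sign error in the paper. The paper says the corollary follows from the Wigner comparison \eqref{eq:covarianceWignercomparison} together with ``level repulsion estimates for Wigner matrices.'' Level repulsion controls how \emph{small} adjacent Wigner gaps can be --- it is a lower bound on $|\lambda_{W,i+1}-\lambda_{W,i}|$. Your proof instead uses Wigner rigidity, which bounds gaps from \emph{above}. Only your tool points in the $\leq$ direction of the statement; the tool the paper cites points the other way. Moreover, the upper bound $|\lambda_{i+1}(t)-\lambda_i(t)|\leq N^{-1+\e}$ is already an immediate consequence of the paper's own Theorem~\ref{theorem:optimalrigiditystronglocallawdeformedmatrices} applied directly to the linearized covariance flow:
\begin{align*}
|\lambda_{i+1}-\lambda_i| \ \leq \ |\lambda_{i+1}-\gamma_{i+1,t}| \ + \ |\gamma_{i+1,t}-\gamma_{i,t}| \ + \ |\gamma_{i,t}-\lambda_i| \ \leq \ 2\,\frac{\varphi^{c_4\xi}}{N} \ + \ \frac{C}{N},
\end{align*}
using that $\varrho_{\on{fc},t}$ is bounded below in the bulk. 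No detour through Wigner matrices or the short-range coupling is needed for the inequality as printed, so both your argument and the paper's stated one are superfluous for it.

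Taken together with the surrounding text --- the author calls the estimate ``a weak form of level repulsion,'' declares it ``too weak'' to feed into the Landon--Yau machinery, and invokes Wigner level repulsion as the key input --- the intended statement is almost certainly a \emph{lower} bound, something like $|\lambda_{i+1}(t)-\lambda_i(t)| \geq N^{-1-\e}$ with high probability. That is the assertion which genuinely requires the comparison \eqref{eq:covarianceWignercomparison}: if $|\lambda_{W,i+1}-\lambda_{W,i}|\geq N^{-1-\delta'}$ by Wigner level repulsion and $\sup_i|\lambda_i-\lambda_{W,i}|\leq N^{-1-\delta}$ with $\delta'<\delta$, then $|\lambda_{i+1}-\lambda_i|\geq N^{-1-\delta'}-2N^{-1-\delta}\gtrsim N^{-1-\e}$. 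Your argument does not establish a lower bound at all. So: the proof you wrote is correct for the inequality printed, but that inequality is trivial given the rigidity already proved; if the corollary is read as the paper's context indicates it should be, your proof is addressing the wrong direction and a genuine level-repulsion input for Wigner matrices (not rigidity) is the missing ingredient.
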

We briefly remark that an optimal level repulsion estimate seems to require analysis of the underlying matrix ensemble and exploiting its Gaussian component, which is the approach taken to obtain optimal level repulsion estimates for Wigner matrices. For a reference concerning level repulsion for Wigner matrices, see \cite{LY}.

We remark that the upper bound on the RHS of \eqref{eq:shortrangeapproximation} is heavily sensitive to the choice of parameters $\omega_0, \omega_1, \omega_{A}, \omega_{\ell}$. In particular, Proposition \ref{prop:shortrangeapproximation} is robust in the sense that the parameters are freely adaptable with differences by only small powers of $N$. However, the robustness of \eqref{eq:shortrangeapproximation} is limited by the quick deterioration of an $N^{-1 - \delta}$ estimate into a $\asymp N^{-1 + \delta}$ estimate.

Before we provide a proof of Proposition \ref{prop:shortrangeapproximation}, we will first make the assumption that $G \asymp 1$ is sufficiently large. In particular, the initial data is very regular, and rigidity holds for all eigenvalues. This will simplify the proof of Proposition \ref{prop:shortrangeapproximation}, and it applies to a wide variety of random matrix ensembles, e.g. biregular bipartite graphs. The proof for weaker regularity on the initial data, although possible, requires a technical and ad hoc argument in applying the strong local law and rigidity estimates. For details concerning initial potentials with less regularity, see Lemma 3.3 in \cite{LSY}.
\begin{proof}
(of Proposition \ref{prop:shortrangeapproximation}).

The key observation is the following dynamics for the difference term $w_i(t) := \hat{z}_i(t) - \wt{z}_i(t)$:
\begin{align}
\frac{\d}{\d t} w_i(t) \ = \ \frac{1}{2N} \sum_{j}^{\mathscr{A}_{q}(i)} \ \mathscr{B}_{ij}(t) \left( w_j(t) - w_i(t) \right) \ + \ \mathscr{E}_i. \label{eq:differencedynamics}
\end{align}
Here, the coefficients $\mathscr{B}_{ij}(t)$ and the error terms $\mathscr{E}_i(t)$ are given as follows:
\begin{align}
\mathscr{B}_{ij}(t) \ &= \ \frac{1}{(\hat{z}_i(t) - \hat{z}_j(t))(\wt{z}_{i}(t) - \wt{z}_{j}(t))}, \\
\mathscr{E}_{i}(t) \ &= \ \mathbf{1}_{i \in \mathscr{I}_{E, \omega_A}} \left[ - \frac{1}{2N} \sum_{j}^{\mathscr{A}_{q}(i)^C} \ \frac{1}{\wt{z}_i(t) - \wt{z}_j(t)} \ + \ \on{Re} \left( m_{\on{fc},t}(\gamma_E(t)) \right) \right]
\end{align}
Thus, by the Duhamel formula, we have
\begin{align}
\omega_i(t) \ = \ e^{t \mathscr{B}(t)} \omega_i(0) \ + \ \int_{0}^{t} \ e^{(t-s) \mathscr{B}(t - s)} \mathscr{E}_i(s) \ \d s.
\end{align}
Noting the dynamics of the difference terms $w_{i}$ are given by the dynamics of a jump process, $\mathscr{B}(t)$ denotes the associated semigroup. Because our initial condition $\omega_i(0)$ vanishes by construction, we have
\begin{align}
\omega_i(t) \ = \ \int_0^{t} \ e^{(t-s) \mathscr{B}(t-s)} \mathscr{E}_i(s) \ \d s.
\end{align}
Because $\mathscr{B}$ is the generator of a jump process on a discrete state space, it is a contraction on $\ell^{\infty}$ of the state space. This implies
\begin{align}
\| \omega_i(t) \|_{\ell^{\infty}} \ \leq \ \int_0^{t} \ \| e^{(t-s) \mathscr{B}(t-s)} \mathscr{E}_i(s) \|_{\ell^{\infty}} \ \d s \ \leq \ t \sup_{s \in [0,t]} \ \| \mathscr{E}_i(s) \|_{\ell^{\infty}}.
\end{align}
Thus, it remains to obtain an $\ell^{\infty}$ estimate on the error terms $\mathscr{E}_i$. To this end, we use the explicit representation of the error terms via the eigenvalue differences. In particular for $i \in \mathscr{I}_{E, \omega_A}$, we first rewrite the error term as follows:
\begin{align}
\mathscr{E}_i(t) \ &= \ \left[ - \frac{1}{2N} \sum_{j}^{\mathscr{A}_q(i)^{C}} \ \frac{1}{\wt{z}_i(t) - \wt{z}_j(t)} \ + \ \int_{\mathscr{I}_{E, t}(i)^C} \ \frac{\varrho_{\on{fc},t}(x)}{\wt{z}_i(t) - x} \ \d x \right] \\
&\quad \ + \ \left[ \int_{\mathscr{I}_{E,t}(i)^C} \ \frac{\varrho_{\on{fc},t}(x)}{\wt{z}_{i}(t) - x} \ \d x \ - \ \int_{\mathscr{I}_{E,t}(i)^C} \ \frac{\varrho_{\on{fc},t}(x)}{\gamma_{i,t} - x} \ \d x \right] \\
&\quad \ + \ \left[ \on{Re} \left( m_{\on{fc},t}(\gamma_E(t)) \right) \ - \ \on{Re} \left( m_{\on{fc},t}(\gamma_{i,t}) \right) \right] \ + \ \left[ \gamma_{i,t} - \gamma_E(t) \right] \\
&\quad \ + \ \int_{\mathscr{I}_{E, t}(i)} \ \frac{\varrho_{\on{fc},t}(x)}{\gamma_{i,t} - x} \ \d x \\
&= \ \mathscr{F}_{1}(t) \ + \ \mathscr{F}_{2}(t) \ + \ \mathscr{F}_{3}(t) \ + \ \mathscr{F}_{4}(t).
\end{align}
Here, the integral in defining the error term $\mathscr{F}_{4}(t)$ is understood in the sense of principal values, as are the Stieltjes transform terms defining the error term $\mathscr{F}_{3}(t)$. It is now our goal to bound each of the error terms above.

For the first error term, we use the strong local law which holds with $(\xi, \nu)$-high probability for all times:
\begin{align}
\mathscr{F}_1(t) \ &= \ \int_{\mathscr{I}_{E,t}(i)^C} \ \frac{1}{\wt{z}_i(t) - x} \left( \varrho_{\on{fc},t}(x) \ - \ \frac{1}{2N} \sum_{\lambda \in \sigma(X(t))} \ \delta(x - \lambda)\right) \ \d x \ + \ O(N^{-1 + \delta}) \\
&\leq \ C \int_{\mathscr{I}_{E,t}(i)^{C}} \ \frac{1}{\wt{z}_i(t) - x + i N^{\omega_{\ell} - 1 - \delta}} \left( \varrho_{\on{fc},t}(x) \ - \ \frac{1}{2N} \sum_{\lambda \in \sigma(X(t))} \ \delta(x - \lambda)\right) \ \d x  \ + \ O(N^{-1 + \delta}).
\end{align}
Here, $\delta$ is an arbitrarily small but fixed constant. We note the big-Oh term comes from rigidity of eigenvalues whose corresponding classical locations are with $N^{-1 + \delta}$ of the boundary of $\mathscr{I}_{E,t}(i)$. The second inequality is taken in the sense of absolute values. This inequality also follows from rigidity, since $\wt{z}_i(t)$ is separated from the boundary of $\mathscr{I}_{E,t}(i)$ by a distance bounded below by $N^{\omega_{\ell} - 1 - \delta}$, upon possibly redefining $\delta > 0$. With this, the strong local law implies, for some $\e > 0$,
\begin{align}
\mathscr{F}_1(t) \ \leq \ \frac{N^{\e}}{N^{\omega_{\ell}}}.
\end{align}
We now estimate the second error term similarly using rigidity:
\begin{align}
\mathscr{F}_2(t) \ &= \ \int_{\mathscr{I}_{E,t}(i)^{C}} \ \varrho_{\on{fc},t}(x) \frac{\gamma_{i,t} - \wt{z}_i(t)}{(\wt{z}_i(t) - x)(\gamma_{i,t} - x)} \ \d x \\
&\leq \ \frac{N^{1 + \e}}{N^{\omega_{\ell}}} \int_{\mathscr{I}_{E,t}(i)^{C}} \ \varrho_{\on{fc},t}(x) \frac{N^{-1 + \e}}{|\wt{z}_i(t) - x| + i N^{\omega_{\ell} - 1 - \delta}}  \ \d x \\
&\leq \ \frac{N^{\e}}{N^{\omega_{\ell}}} \int_{\mathscr{I}_{E, t}(i)^{C}} \ \frac{\varrho_{\on{fc},t}(x)}{\left| \wt{z}_i(t) - x \right| + i N^{\omega_{\ell} - 1 - \delta}} \ \d x.
\end{align}
To estimate this last term, because $\varrho_{\on{fc},t}$ has compact support, it suffices to estimate the Stieltjes transform of the free convolution for large energy. To this end we use a general result for Stieltjes transforms of compactly supported measures:
\begin{align}
\left| m_{\on{fc},t}(E + i \eta) \right| \ \leq \ C \log(N)^{C} \sup_{\eta' \geq \eta} \ \on{Im} \left( m_{\on{fc},t}(E + i \eta') \right).
\end{align}
Here, $C > 0$ is a fixed constant. For a proof of this result, we refer to Lemma 7.1 in \cite{LY}.

By the fixed-point equation defining the free convolution, the RHS is bounded by $C \log(N)^C$. Thus, we deduce the following bound on the second error term for some possibly adapted constant $\e > 0$:
\begin{align}
\left| \mathscr{F}_2(t) \right| \ \leq \ C \frac{N^{\e}}{N^{\omega_{\ell}}}.
\end{align}
We now estimate the third error term with the following bound for an arbitrarily small but fixed $\delta > 0$:
\begin{align}
\left| \mathscr{F}_3(t) \right| \ \leq \ C \left( \frac{N^{\omega_{A}}}{N^{\omega_0}} \ + \ \frac{N^{\omega_A}}{N} \right).
\end{align}
This follows from bounding the following time-derivative of the Stieltjes transform given in Lemma 3.3 in \cite{LSY}:
\begin{align}
\left| \frac{\partial}{\partial z} m_{\on{fc},t_0 + t}(z) \right| \ \leq \ \frac{C}{t_0} \ \asymp \ \frac{C}{N^{-1 + \omega_0}}. \label{eq:derivativeST}
\end{align}
Indeed, by construction, we know $|\gamma_{i,t} - \gamma_E(t)|$ is bounded by $N^{\omega_A - 1}$ up to a constant independent of $N$. This gives the desired estimate:
\begin{align}
\left| \mathscr{F}_{3}(t) \right| \ \leq \ C \left( \frac{N^{\omega_A - 1}}{N^{\omega_0 - 1}} \ + \ \frac{N^{\omega_A}}{N} \right).
\end{align}
It remains to bound the last error term. To this end, we first note the interval $\mathscr{I}_{E, t}(i)$ is almost symmetric by definition. To elaborate, if $\mathscr{I}_{E, t}(i)$ were symmetric about $\gamma_{i}(t)$, then the error term $\mathscr{F}_{4}(t)$ would be bounded by the $C^1$-norm of the free convolution law up to a constant by the classical regularization procedure of the principal value. However, we instead have the estimate
\begin{align}
\left| \mathscr{F}_4(t) \right| \ = \ \left| \int_{\mathscr{I}_{E,t}(i)} \ \frac{\varrho_{\on{fc},t}(x)}{\gamma_{i,t} - x} \ \d x \right| \ &\leq \ C \| \nabla \varrho_{\on{fc},t} \|_{C^0} \ + \ C \frac{N^{\omega_{\ell}}}{N t_0}.
\end{align}
Indeed, the first term comes from regularizing the integral on some small symmetric interval centered at $\gamma_{i,t}$ contained in $\mathscr{I}_{E,t}(i)$, and the second term follows from a simple bound using the following estimate on classical locations which in turn follows from the derivative estimate \eqref{eq:derivativeST}:
\begin{align}
\gamma_{i + k, t_0 + t} - \gamma_{i, t_0 + t} \ = \ \left( \gamma_{i - k, t_0 + t} - \gamma_{i, t_0 + t} \right) \left( 1 + O \left( \frac{k}{N t_0} \right) \right).
\end{align}
We may also bound the derivative norm by applying the inverse Stieltjes transform to \eqref{eq:derivativeST}. This implies
\begin{align}
\left| \mathscr{F}_{4}(t) \right| \ \leq \ C \frac{N^{\omega_{\ell}}}{N^{\omega_0}}.
\end{align}
Combining the bounds for all the error terms, we immediately deduce the desired estimate \eqref{eq:shortrangeapproximation}.
\end{proof}
Before we proceed, we note our above estimates did not produce a term on the order of $(NG)^{-1/2}$. This is because we assumed high regularity, i.e. $G \asymp 1$, so a direct application of the strong local law and rigidity cannot see this term. We note, however, this missing error term appears naturally in handling the error term $\mathscr{F}_1$ assuming little regularity on the initial data. For details we again refer to Section 3 in \cite{LSY}.
%
%
%
\section{Gap Universality: Proof of the Theorem \ref{theorem:universalgaps}}
We now use the short-range approximation in Proposition \ref{prop:shortrangeapproximation} to derive gap universality. The method here is taken from the proof of gap universality for the Wigner ensemble given in \cite{LY}. We begin by introducing the coupled Dyson Brownian Motions matching the process $\hat{\lambda}_i(t)$ with GOE eigenvalue dynamics. To this end, recall that $\hat{\lambda}_{i}(t)$ denotes the solutions to the following short-range DBM equations:
\begin{align}
\d \hat{\lambda}_{i}(t_0 + t) \ = \ \frac{1}{\sqrt{N}} \d B_{i}(t_0 + t) \ + \ \frac{1}{2N} \sum_{j}^{\mathscr{A}_{q}(i)} \ \frac{1}{\hat{\lambda}_{i}(t_0 + t) - \hat{\lambda}_{j}(t_0 + t)} \d t. 
\end{align}
We emphasize the short-range interactions in the drift term contain at most one contribution from each pair of totally anticorrelated eigenvalues $(\pm \lambda_i)$ in the spectrum of $X(t)$.
\subsection{The Coupled Gaussian Dynamics}
We now fix an energy $E$ in the bulk of the linearized Marchenko-Pastur law and an index $i \in \mathscr{I}_{E, \omega_A}$. We now define the following matrix:
\begin{align}
W(t_0) \ = \ a_0 W \ + \ b_0, \quad \ a_0 \ = \ \frac{\varrho_{\on{sc}}(\mu_i)}{\varrho_{\on{fc},t_0}(\gamma_{i,t_0})}, \ b_0 \ = \ \gamma_{i,t_0} - a_0 \mu_i.
\end{align}
We now define the DBM flow for the eigenvalues of a GOE matrix with initial data given by the eigenvalues of $W(t_0)$. We first define the short-range DBM denoted by $\hat{\nu}(t)$ given by the following system of SDEs for $i \in \mathscr{I}_{E, \omega_A}$:
\begin{align}
\d \hat{\nu}_{i}(t_0 + t) \ = \ \frac{1}{\sqrt{N}} \d B_{i}(t_0 + t) \ + \ \frac{1}{2N} \sum_{j}^{\mathscr{A}_{q}(i)} \ \frac{1}{\hat{\nu}_i(t_0 + t) - \hat{\nu}_j(t_0 + t)} \ \d t. \label{eq:shortrangeGOE}
\end{align}
Here, the Brownian motions defining \eqref{eq:shortrangeGOE} are the same Brownian motions driving the short-range process $\hat{\lambda}(t)$. 

We now define the full-range DBM with initial data is the full spectrum of $W_{a,b}(t_0)$. We will not study the analysis of the full-range DBM in detail, as it will only be necessary in knowing the global structure of the eigenvalue system. We define this full-range DBM though the following SDEs:
\begin{align}
\d \nu_i(t_0 + t) \ = \ \frac{1}{\sqrt{N}} \d B_i(t_0 + t) \ + \ \frac{1}{2N} \sum_{j}^{(i)} \ \frac{1}{\nu_i(t_0 + t) - \nu_j(t_0 + t)} \ \d t.
\end{align}
Because the Gaussian data is invariant under the full-range DBM equations above, we deduce the system $\nu(t)$ gives the spectrum of a sub-linear Gaussian perturbation of $W(t_0)$. This may be realized as processes $a_t$ and $b_t$ with initial data $a_0$ and $b_0$ defined above; this is the desired global data mentioned in introducing the full-range DBM.

\subsection{Gap Universality}
We now use the coupled short-range GOE dynamics $\hat{\nu}(t)$ to deduce gap universality for the linearized covariance matrix ensemble. By Proposition \ref{prop:shortrangeapproximation} we have the following raw gap estimates for the short-range dynamics with $(\xi, \nu)$-high probability:
\begin{align}
\sup_{t \in [0, t_1]} \ \left| \left( \hat{\lambda}_{i}(t_0 + t) - \hat{\lambda}_{i+k}(t_0 + t) \right) \ - \ \left( \hat{\nu}_{i}(t_0 + t) - \hat{\nu}_{i+k}(t_0 + t) \right) \right| \ < \ N^{-1 - \e}.
\end{align}
Here, $\e > 0$ is a small, fixed constant. Because $\varrho_{\on{fc},t_0} \asymp 1$ in the bulk, the same gap estimate holds multiplying the LHS by $\varrho_{\on{fc},t_0}(\gamma_{i,t_0})$, upon possibly adjusting the constant $\e > 0$. This gives the following preliminary estimate upon a possible readjustment of the parameter $\e > 0$:
\begin{align}
\sup_{t \in [0, t_1]} \ \left| \varrho_{\on{fc},t_0}(\gamma_{i,t_0})  \left( \hat{\lambda}_{i}(t_0 + t) - \hat{\lambda}_{i+k}(t_0 + t) \right) \ - \ \varrho_{\on{fc},t_0}(\gamma_{i,t_0}) \left( \hat{\nu}_{i}(t_0 + t) - \hat{\nu}_{i+k}(t_0 + t) \right)\right| \ < \ N^{-1 - \e}. \label{eq:preliminarybound}
\end{align}
This, although close, is not the desired bound from which we may deduce gap universality from a first-order Taylor expansion. Through perturbative methods, however, we may deduce the desired bound from \eqref{eq:preliminarybound}; in particular we aim to perturb the free convolution factors and the classical locations in time, which requires a time- and energy- derivative estimate on the free convolution. We make this more precise shortly.
\begin{lemma} \label{lemma:desiredgapestimate}
In the context of the short-range dynamics $\hat{\lambda}(t)$ and $\hat{\nu}(t)$, we have
\begin{align}
\sup_{t \in [0, t_1]} \ \left| \varrho_{\on{fc},t}(\gamma_{i,t})  \left( \hat{\lambda}_{i}(t_0 + t) - \hat{\lambda}_{i+k}(t_0 + t) \right) \ - \ \frac{\varrho_{\on{sc}}(\mu_i)}{a_t} \left( \hat{\nu}_{i}(t_0 + t) - \hat{\nu}_{i+k}(t_0 + t) \right)\right| \ < \ N^{-1 - \e}.
\end{align}
\end{lemma}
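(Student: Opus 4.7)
The plan is to read the lemma as a perturbative upgrade of the preliminary bound \eqref{eq:preliminarybound}, in which the fixed-time prefactor $\varrho_{\on{fc},t_0}(\gamma_{i,t_0})$ on both the linearized-covariance side and the GOE side is replaced, respectively, by the time-varying prefactors $\varrho_{\on{fc},t_0+t}(\gamma_{i,t_0+t})$ and $\varrho_{\on{sc}}(\mu_i)/a_t$. The crucial observation is that at $t=0$ the defining identity $a_0 = \varrho_{\on{sc}}(\mu_i)/\varrho_{\on{fc},t_0}(\gamma_{i,t_0})$ forces the two new prefactors to coincide, so the entire discrepancy for $t\in(0,t_1]$ arises from the evolution of these two scalar quantities over a window of length at most $t_1 = N^{-1+\omega_1}$.

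Concretely, I would add and subtract $\varrho_{\on{fc},t_0}(\gamma_{i,t_0})$ times the covariance gap and $\varrho_{\on{fc},t_0}(\gamma_{i,t_0})$ times the GOE gap, splitting the target expression into three terms: the main term controlled directly by \eqref{eq:preliminarybound}; a term of the form $[\varrho_{\on{fc},t_0+t}(\gamma_{i,t_0+t}) - \varrho_{\on{fc},t_0}(\gamma_{i,t_0})]\cdot(\hat\lambda_i(t_0+t) - \hat\lambda_{i+k}(t_0+t))$; and a term of the form $\varrho_{\on{sc}}(\mu_i)[1/a_0 - 1/a_t]\cdot(\hat\nu_i(t_0+t) - \hat\nu_{i+k}(t_0+t))$. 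It suffices to show that each of the two latter error terms is strictly smaller than $N^{-1-\e}$ after a possible shrinking of $\e$.

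For the free convolution error, I would combine a spatial-derivative estimate on $\varrho_{\on{fc},s}$ (extracted from the bound $|\partial_z m_{\on{fc},s}(z)|\lesssim s^{-1}$ recorded in \eqref{eq:derivativeST}, via Stieltjes inversion in the bulk) with the analogous time-derivative bound on the density at a fixed point, together with the ODE \eqref{eq:dtclassicallocation} controlling the motion of $\gamma_E(s)$. Both derivatives are bounded by $O(t_0^{-1})$ in the bulk uniformly for $s\in[t_0, t_0+t_1]$, so
\begin{equation*}
\bigl|\varrho_{\on{fc},t_0+t}(\gamma_{i,t_0+t}) - \varrho_{\on{fc},t_0}(\gamma_{i,t_0})\bigr| \ \lesssim \ t_1/t_0 \ = \ N^{\omega_1-\omega_0}.
\end{equation*}
Multiplying this by the rigid gap bound $|\hat\lambda_i(t_0+t) - \hat\lambda_{i+k}(t_0+t)| \lesssim kN^{-1+\delta}$ from Theorem \ref{theorem:optimalrigiditystronglocallawdeformedmatrices} and using $k \leq N^{c_\omega}$ produces an error of order $N^{-1 + c_\omega + \omega_1 - \omega_0 + \delta}$, which is absorbed into $N^{-1-\e}$ provided $\omega_0$ has been chosen large compared with $\omega_1$, $c_\omega$, and $\delta$. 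For the Gaussian-side error, I would exploit that $\nu(t_0+t)$ is a genuine short-time Gaussian perturbation of $W(t_0)$, so the scaling $a_t$ obeys an explicit ODE yielding $|a_t - a_0|\lesssim t_1$ and hence $|1/a_0 - 1/a_t|\lesssim t_1$. Multiplying again by the rigid GOE gap $|\hat\nu_i - \hat\nu_{i+k}| \lesssim kN^{-1+\delta}$ gives a bound of order $N^{-2+\omega_1 + c_\omega + \delta}$, comfortably smaller than the target.

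The main obstacle is the first of these two estimates: establishing the joint time-energy regularity of $\varrho_{\on{fc},s}$ at the sharp scale $t_0^{-1}$ uniformly over the bulk window traced out by $\gamma_{i,s}$ for $s\in[t_0, t_0+t_1]$. Although \eqref{eq:derivativeST} provides a clean $s^{-1}$ control on $\partial_z m_{\on{fc},s}$ and the $C^1$-bound on $\varrho_{\on{fc},s}$ follows from Stieltjes inversion, one must also track the coupled time derivative of the density through the fixed-point equation defining $m_{\on{fc},s}$ and check that neither the principal-value regularization nor the motion of the classical location $\gamma_{i,s}$ (governed by \eqref{eq:dtclassicallocation}) injects a worse singularity in the bulk. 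The analogous estimate for $a_t$ is routine Gaussian book-keeping.
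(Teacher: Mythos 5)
Your decomposition is essentially identical to the paper's: both split the prefactor change via the triangle inequality into a spatial-derivative term (controlled by applying Stieltjes inversion to \eqref{eq:derivativeST} and using \eqref{eq:dtclassicallocation} for the motion of the classical location) and a time-derivative term (the bound $|\partial_t \varrho_{\on{fc},t_0}| \lesssim t_0^{-1}$ from Lemma~7.6 of \cite{LY}), then multiply by the rigid gap $\lesssim N^{-1+\delta}$ from Theorem~\ref{theorem:optimalrigiditystronglocallawdeformedmatrices}, and handle the Gaussian side via $|a_t - a_0| \lesssim t \leq t_1$. The paper even records the same intermediate estimates (\eqref{eq:almosttheregaps} and \eqref{eq:almosttherewowgaps}), so there is no substantive difference in route.
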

\begin{proof}
To replace this free convolution factor at $t_0$ by the free convolution at time $t_0 + t$, we appeal to the following simple estimate:
\begin{align}
\left| \varrho_{\on{fc},t}(\gamma_{i,t}) \ - \ \varrho_{\on{fc},t_0}(\gamma_{i, t_0}) \right| \ &\leq \ \left| \varrho_{\on{fc},t}(\gamma_{i,t}) \ - \ \varrho_{\on{fc},t}(\gamma_{i,t_0}) \right| \ + \ \left| \varrho_{\on{fc},t}(\gamma_{i,t_0}) \ - \ \varrho_{\on{fc},t_0}(\gamma_{i,t_0}) \right| \\
&\leq \ C \frac{N^{\omega_1} \log N}{N^{\omega_0}} \ + \ \left| \varrho_{\on{fc},t}(\gamma_{i,t_0}) \ - \ \varrho_{\on{fc},t_0}(\gamma_{i,t_0}) \right|.
\end{align}
Here, the second inequality follows from an energy-derivative estimate on the free convolution density given by applying the Stieltjes inversion formula to \eqref{eq:derivativeST} and bounding the time derivative of the classical location computed in \eqref{eq:dtclassicallocation} by $\log N$. It remains to bound the second term in the inequality above. We do so by bounding the time-derivative of the free convolution density. Indeed, we may obtain the following time-derivative bound by computing the time-derivative of its Stieltjes transform via its fixed-point equation and applying the Stieltjes inversion formula. This method yields the following estimate; for a reference, see Lemma 7.6 in \cite{LY}:
\begin{align}
\left| \partial_t \varrho_{\on{fc},t_0}(E) \right| \ \leq \ \frac{C}{t_0} \ = \ \frac{C}{N^{-1 + \omega_0}}.
\end{align}
Because $t - t_0 \leq t_1 = N^{-1 + \omega_1}$, we ultimately deduce
\begin{align}
\left| \varrho_{\on{fc},t}(\gamma_{i,t}) \ - \ \varrho_{\on{fc},t_0}(\gamma_{i, t_0}) \right| \ \leq \ C \frac{N^{\omega_1}}{N^{\omega_0}} \left( \log N + 1 \right) \ \leq \ \frac{1}{N^{\omega_2}},
\end{align}
for some fixed constant $\omega_2 > 0$. Combining our estimates thus far, we have the following bound with $(\xi, \nu)$-high probability uniformly over all times $t \in [t_0, t_0+t_1]$:
\begin{align}
\left| \varrho_{\on{fc},t}(\gamma_{i,t}) \left( \lambda_{i} - \lambda_{i+k} \right) \ - \ \frac{\varrho_{\on{sc}}(\mu_i)}{a_0}\left( \nu_{i} - \nu_{i+k} \right) \right| \ \leq \ N^{-1 - \e}, \label{eq:almosttheregaps}
\end{align}
upon possibly redefining $\e > 0$. Here, we keep the $o(N^{-1})$ estimate on the RHS by appealing to the following bound which holds by rigidity:
\begin{align}
\left| \varrho_{\on{fc},t}(\gamma_{i,t}) \ - \ \varrho_{\on{fc},t_0}(\gamma_{i,t_0}) \right| \left( \lambda_{i} - \lambda_{i+k} \right) \ \leq \ N^{-\omega_2} N^{-1 + \delta} \ \leq \ N^{-1 - \e}.
\end{align}
Here, $\delta > 0$ is arbitrarily small and $\e > 0$ is fixed. However, because eigenvalues differences change at most linearly upon linear perturbations of the underlying matrix, we deduce the simple bound $|a_t - a_0| \leq Ct$. This implies the following time-adapted bound on the event on which the upper bound \eqref{eq:almosttheregaps} holds:
\begin{align}
\left| \varrho_{\on{fc},t}(\gamma_{i,t}) \left( \lambda_{i} - \lambda_{i+k} \right) \ - \ \frac{\varrho_{\on{sc}}(\mu_i)}{a_t}\left( \nu_{i} - \nu_{i+k} \right) \right| \ \leq \ N^{-1 - \e}. \label{eq:almosttherewowgaps}
\end{align}
This follows from the constraint that we stipulate $t \leq t_1$, which concludes the proof.
\end{proof}
We now make the following observation; by construction the gaps $a_t^{-1}(\nu_i - \nu_{i+k})$ are distributed as the gaps of a standard GOE matrix. Moreover, by applying the proof of Lemma \ref{lemma:desiredgapestimate} iteratively $n$ times for any $n = O(1)$, we also deduce the following multi-gap estimate with $(\xi, \nu)$-high probability:
\begin{align}
\sup_{k \in [0,n]} \ \sup_{t \in [0, t_1]} \ \left| \varrho_{\on{fc},t}(\gamma_{i,t})  \left( \hat{\lambda}_{i}(t_0 + t) - \hat{\lambda}_{i+k}(t_0 + t) \right) \ - \ \frac{\varrho_{\on{sc}}(\mu_i)}{a_t} \left( \hat{\nu}_{i}(t_0 + t) - \hat{\nu}_{i+k}(t_0 + t) \right)\right| \ < \ N^{-1 + \e}. \label{eq:taylorexpandthis}
\end{align}
From here, to deduce gap universality we Taylor expand our test function $O \in C^{\infty}_c(\R^n)$ on the event on which \eqref{eq:taylorexpandthis} holds. In particular, by a first-order Taylor expansion on this event we deduce the following straightforward estimate:
\begin{align}
O &\left( N \varrho_{\on{fc},t}(\gamma_{i,t}) (\lambda_{i} - \lambda_{i + i_1} ), \ldots, N \varrho_{\on{fc},t}(\gamma_{i,t}) (\lambda_{i} - \lambda_{i + i_n}) \right) \nonumber \\
&\quad \ - \ O \left( N \varrho_{\on{sc}}(\mu_i) (\lambda_{i} - \lambda_{i+i_1}), \ldots, N \varrho_{\on{sc}}(\mu_i)(\lambda_{i} - \lambda_{i + i_n} ) \right) \ \lesssim_{\| O \|_{C^1}, n} \ N^{-\e},
\end{align}
where $\e > 0$ is the same exponent in the estimate \eqref{eq:almosttherewowgaps}.

On the complement of the event on which \eqref{eq:almosttherewowgaps} holds, we may apply the following crude bound which follows from the spectral bound on the initial data $V$:
\begin{align}
O &\left( N \varrho_{\on{fc},t}(\gamma_{i,t}) (\lambda_{i} - \lambda_{i + i_1} ), \ldots, N \varrho_{\on{fc},t}(\gamma_{i,t}) (\lambda_{i} - \lambda_{i + i_n}) \right) \nonumber \\
&\quad \ - \ O \left( N \varrho_{\on{sc}}(\mu_i) (\lambda_{i} - \lambda_{i+i_1}), \ldots, N \varrho_{\on{sc}}(\mu_i)(\lambda_{i} - \lambda_{i + i_n} ) \right) \ \lesssim_{\| O \|_{\mathscr{S}}, n} \ N^{C}
\end{align}
for any $C = O(1)$. However, because \eqref{eq:almosttherewowgaps} holds with $(\xi, \nu)$-high probability, by taking expectations we deduce
\begin{align}
\E^{X(t)} &\left[ O \left( N \varrho_{\on{fc},t}(\gamma_{i,t}) (\lambda_{i} - \lambda_{i + i_1} ), \ldots, N \varrho_{\on{fc},t}(\gamma_{i,t}) (\lambda_{i} - \lambda_{i + i_n}) \right) \right] \nonumber \\
&- \ \E^{\on{GOE}} \left[ O \left( N \varrho_{\on{sc}}(\mu_i) (\lambda_{i} - \lambda_{i+i_1}), \ldots, N \varrho_{\on{sc}}(\mu_i)(\lambda_{i} - \lambda_{i + i_n} ) \right) \right]  \ \leq \ N^{-\e} \ + \ N^C e^{-\nu \log^{\xi} N}.
\end{align}
Because $\xi \gg 1$, appealing to Corollary \ref{corollary:shortrangereduction}, this completes the proof of Theorem \ref{theorem:universalgaps}.
%
%
%
\section{Appendix}
\subsection{Concentration estimates and stochastic continuity}
We now focus on extending the strong local law in Theorem \ref{theorem:stronglocallawstronglocallawdeformedmatrices} from fixed times in $\mathbf{T}_{\omega}$ to all times simultaneously with high probability. We begin with the following concentration estimate for sub-exponential random variables; for a reference, see \cite{LY}.
\begin{prop} \label{prop:concentration}
Suppose $(a_i)_{i \in I}$ and $(b_i)_{i \in I}$ are independent families of centered random variables with variance $\sigma^2$ and sub-exponential decay in the following sense:
\begin{align}
\mathbb{P} \left( |a_i| \geq x \sigma \right) + \mathbb{P} \left( |b_i| \geq x \sigma \right) \ \leq \ C_0 e^{-x^{1/\theta}},
\end{align}
where $C_0 > 0$ and $\theta \geq 1$ are constants uniform over $i \in I$. Suppose $A_i, B_{ij} \in \C$ are deterministic constants. Then, for some constants $a_0 > 1, A_0 \geq 10$, and $C_1 \geq 1$ depending on $\theta, C_0$, for any parameter $a_0 \leq \xi \leq A_0 \log \log N$ and $\varphi = \log^{C_1} N$, we have the following concentration inequalities:
\begin{align}
\mathbb{P} \left( \left| \sum_{i = 1}^N \ A_i a_i \right| \geq \varphi^{\xi} \sigma \left( \sum_{i = 1}^N \ |A_i|^2 \right)^{1/2} \right) \ &\leq \ e^{- \log^\xi N}, \\
\mathbb{P} \left( \left| \sum_{i = 1}^N \ \overline{a_i} B_{ii} a_i - \sum_{i = 1}^N \sigma^2 B_{ii} \right| \geq \varphi^\xi \sigma \left( \sum_{i = 1}^N |B_{ii}|^2 \right)^{1/2} \right) \ &\leq \ e^{-\log^\xi N}, \\
\mathbb{P} \left( \left| \sum_{i \neq j}^N \ \overline{a_i} B_{ij} a_j \right| \geq \varphi^\xi \sigma \left( \sum_{i \neq j} \ |B_{ij}|^2 \right)^{1/2} \right) \ &\leq \ e^{-\log^\xi N}, \\
\mathbb{P} \left( \left| \sum_{i,j = 1}^N \ \overline{a_i} B_{ij} b_j \right| \geq \varphi^\xi \sigma \left( \sum_{i,j = 1}^N \ |B_{ij}|^2 \right)^{1/2} \right) \ &\leq \ e^{-\log^\xi N}.
\end{align}
\end{prop}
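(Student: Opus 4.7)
The plan is to establish all four concentration estimates uniformly via the moment method combined with Markov's inequality, which is the standard approach for sub-exponential concentration. The core input is a sharp moment bound on individual entries: the sub-exponential tail bound yields $\mathbb{E}|a_i|^{2p} \leq (Cp)^{2p\theta}\sigma^{2p}$ for all $p \geq 1$, obtained by integrating $\mathbb{P}(|a_i|^{2p} \geq t)$, and an identical bound for $b_i$. The four inequalities will follow from computing the $2p$-th moment of each statistic and applying Markov's inequality with $t$ equal to the right-hand side of the target probability bound. Choosing $p \asymp \log^\xi N$ and taking $C_1$ large relative to $\theta$ so that $\varphi^{\xi} = \log^{C_1 \xi} N$ dominates the $p^{C\theta}$ factor from moment growth, the resulting bound is of order $(p^{C\theta}/\varphi^\xi)^p \leq e^{-\log^\xi N}$, as desired. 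The upper limit $\xi \leq A_0 \log \log N$ appears precisely to guarantee this domination within the allowed range.

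The first two inequalities reduce to the scalar case. The linear estimate follows from a Rosenthal-type inequality for sums of independent centered random variables, namely $\mathbb{E}|\sum_i A_i a_i|^{2p} \leq (Cp)^{Cp\theta} \sigma^{2p}\bigl(\sum_i |A_i|^2\bigr)^p$, proved by expanding the $2p$-th power and counting contributions from index sequences in which every index appears at least twice (sequences with an unmatched index vanish in expectation). For the diagonal quadratic form, rewrite $\sum_i \overline{a_i}B_{ii} a_i - \sigma^2 \sum_i B_{ii} = \sum_i B_{ii}(|a_i|^2 - \sigma^2)$; the variables $|a_i|^2 - \sigma^2$ are centered, independent, have variance $O(\sigma^4)$, and satisfy a sub-exponential tail with parameter $2\theta$, so the linear inequality applies verbatim with $\sigma$ replaced by $C\sigma^2$.

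The bilinear and off-diagonal quadratic cases require decoupling. For the bilinear sum $\sum_{i,j} \overline{a_i} B_{ij} b_j$, condition on $(b_j)$: given this family, the expression is linear in $(a_i)$ with coefficients $A_i := \sum_j B_{ij} b_j$, so the linear inequality yields conditional control in terms of $\sum_i |A_i|^2 = \sum_{j,j'} \overline{b_j} \bigl(\sum_i \overline{B_{ij}} B_{ij'}\bigr) b_{j'}$. Applying the diagonal quadratic bound already established to the $(b_j)$ family in the diagonal terms, and the off-diagonal estimate (to be proved) to the off-diagonal terms, then taking an outer expectation, closes the bound. The off-diagonal quadratic case $\sum_{i \neq j} \overline{a_i} B_{ij} a_j$ is reduced to the bilinear case by the standard decoupling inequality of de la Peña and Montgomery-Smith, which compares $\mathbb{E}|\sum_{i \neq j} \overline{a_i} B_{ij} a_j|^{2p}$ to $\mathbb{E}|\sum_{i,j} \overline{a_i} B_{ij} a_j'|^{2p}$ for an independent copy $(a_j')$ up to an absolute constant.

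The main obstacle is the bookkeeping of constants: ensuring that the $p^{C\theta p}$ growth from sub-exponential moments is absorbed by the gain $\varphi^{\xi p}$, and that the decoupling step for the off-diagonal quadratic produces no spurious combinatorial factor that breaks the exponent. The range $\xi \leq A_0 \log \log N$ and the requirement $a_0 > 1$ are both dictated by this balance; outside this regime the moment growth overwhelms the gain, and the method fails. Since the techniques are entirely standard and the detailed bookkeeping is carried out verbatim in \cite{LY}, the proposal would be to cite that reference for the routine estimates and emphasize only the decoupling step and the choice of parameters.
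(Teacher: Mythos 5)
The paper does not actually prove this proposition; it cites \cite{LY}, which in turn traces back to the large-deviation lemmas of Erd\H{o}s, Yau, and Yin, where the argument is moments plus Markov with a direct combinatorial expansion for the quadratic forms. Your high-level plan---the per-variable moment bound $\E|a_i|^{2p} \leq (Cp)^{2p\theta}\sigma^{2p}$, a Rosenthal-type inequality for the linear sum, the reduction of the diagonal quadratic form to the linear case via $|a_i|^2 - \sigma^2$, and the optimization $p \asymp \log^{\xi}N$ with $C_1$ chosen large relative to $\theta$ so that $\varphi^{\xi}$ absorbs the $(Cp)^{C\theta}$ growth---is sound and matches the standard route.

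The gap is in the treatment of the bilinear and off-diagonal quadratic forms: as written, the argument is circular. You establish the bilinear bound by conditioning on $(b_j)$ and then controlling the random threshold $\sum_i |A_i|^2 = \sum_{j,j'} \overline{b_j}\bigl(\sum_i \overline{B_{ij}}B_{ij'}\bigr)b_{j'}$, which for its off-diagonal part requires, in your own words, ``the off-diagonal estimate (to be proved)''; but you then reduce that off-diagonal estimate back to the bilinear one via de la Pe\~{n}a--Montgomery-Smith decoupling. Neither can be established first in this order. Two repairs are available. The cleanest is to drop decoupling altogether and handle the off-diagonal quadratic by direct expansion of $\E\bigl|\sum_{i \neq j}\overline{a_i}B_{ij}a_j\bigr|^{2p}$, counting index words in which every index repeats; this is what the cited references actually do. Alternatively, keep decoupling but make the bilinear estimate self-contained: after conditioning on $(b)$, pass to the conditional $2p$-th moment, which leaves you needing a bound on $\E\bigl[\bigl(\sum_i |A_i|^2\bigr)^p\bigr]$; Minkowski's inequality in $L^p$ gives $\bigl\|\sum_i |A_i|^2\bigr\|_{L^p} \leq \sum_i \bigl\|\langle B_{i\cdot}, b\rangle\bigr\|_{L^{2p}}^2 \leq (Cp)^{2C\theta}\sigma^2 \sum_{i,j}|B_{ij}|^2$, using only the linear estimate applied row by row, and this closes the bilinear case without invoking any quadratic bound. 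Either fix breaks the cycle; as proposed, though, the dependency graph has a loop and the proof does not go through.
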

We now present a general result in stochastic calculus which states that Ito integrals of bounded functions are indeed sub-exponential. This result follows from the Burkholder inequality applied to a Brownian integral.
\begin{prop} \label{prop:subexponentialIto}
Let $t > 0$ be a positive time and assume $f(x) \in C^{2}(\R)$ is bounded, adapted and measurable. Suppose $X_t$ is given by the following Ito integral:
\begin{align}
X_t \ = \ \int_0^t \ f(X_s) \ \dd B_s.
\end{align}
Then $\sup_{t \leq T} X_t$ is sub-exponential, and thus with $(\xi, \nu)$-high probability 
\begin{align}
\sup_{t \leq T} \ |X_t| \ \leq \ C \sigma_T 
\end{align}
for some constant $C= O(1)$. Here, $\sigma_T$ is the standard deviation of $X_T$. 
\end{prop}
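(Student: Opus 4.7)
The plan is to exploit the martingale structure of $X_t$ and reduce the tail estimate to the classical Gaussian control for Brownian motion. Since $f$ is bounded and adapted, $X_t$ is a continuous $L^2$-martingale whose quadratic variation
$$\langle X \rangle_t \ = \ \int_0^t \ f(X_s)^2 \ \dd s$$
is bounded deterministically by $\| f \|_{\infty}^2 \, t$. This pathwise control on $\langle X \rangle_t$ is the structural input that enables Gaussian-type tail bounds for $\sup_{t \leq T} X_t$, and essentially renders the boundedness hypothesis on $f$ the only thing one really needs (the $C^2$ hypothesis plays no role beyond guaranteeing the Ito integral makes classical sense).

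The cleanest implementation proceeds via the Dambis-Dubins-Schwarz time-change representation: on a possibly enlarged probability space there exists a standard one-dimensional Brownian motion $W$ with $X_t = W_{\langle X \rangle_t}$. Combined with the deterministic bound on $\langle X \rangle_T$, this yields the pathwise inequality
$$\sup_{t \leq T} \ |X_t| \ \leq \ \sup_{0 \leq s \leq \| f \|_{\infty}^2 T} \ |W_s|,$$
and the reflection principle for Brownian motion supplies the Gaussian-tail estimate
$$\mathbb{P} \left( \sup_{t \leq T} \ |X_t| \geq \lambda \right) \ \leq \ 4 \exp \left( - \frac{\lambda^2}{2 \| f \|_{\infty}^2 T} \right).$$
This immediately establishes sub-Gaussianity (in particular sub-exponentiality) of $\sup_{t \leq T} X_t$, which is the first assertion of the proposition. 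An essentially equivalent route, alluded to in the statement, is to apply Burkholder-Davis-Gundy for large even moments, bound the resulting $\langle X \rangle_T^{p/2}$ by $\| f \|_{\infty}^p T^{p/2}$, and then optimize the moment index $p$ via Markov's inequality.

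For the quantitative $(\xi, \nu)$-high probability bound, observe that $\sigma_T^2 = \E \langle X \rangle_T \leq \| f \|_{\infty}^2 T$; inserting $\lambda = C \sigma_T$ into the tail bound and choosing $C$ large enough that the Gaussian exponent $C^2 \sigma_T^2 / (2 \| f \|_{\infty}^2 T)$ dominates $\nu \log^{\xi} N$ concludes the proof. The only real obstacle is the calibration of $C$: in the nondegenerate regime $\sigma_T \asymp \| f \|_{\infty} \sqrt{T}$, which is the only situation relevant to applications in the main body of the paper, $C$ may be taken to depend only on $\xi$ and $\nu$ as claimed; in a genuinely degenerate regime $\sigma_T \ll \| f \|_{\infty} \sqrt{T}$ one would have to absorb a polylogarithmic factor into $C$, but this does not arise in the use we make of the proposition.
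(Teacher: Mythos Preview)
Your argument is correct. The paper does not actually give a proof of this proposition; it only remarks beforehand that ``this result follows from the Burkholder inequality applied to a Brownian integral'' and then states the proposition without further justification. Your write-up therefore fills in considerably more than the paper does.

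Your primary route, via the Dambis--Dubins--Schwarz representation together with the deterministic bound $\langle X\rangle_T \leq \|f\|_\infty^2 T$, is a genuinely different (and arguably cleaner) implementation than the Burkholder/BDG moment method the paper alludes to: it produces the sub-Gaussian tail directly, without optimizing over a moment index. You also sketch the BDG route as an alternative, which is exactly what the paper's one-line hint points to, so the two are consistent. Your closing remark about the calibration of $C$ in the degenerate regime $\sigma_T \ll \|f\|_\infty\sqrt{T}$ is a valid caveat that the paper does not address; as you note, the applications in the body of the paper (bounded integrands of order one) sit in the nondegenerate regime, so this does not affect anything downstream.
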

With these two preliminary, general results, we now compute the SDE dynamics of the Stieltjes transform $m_N(z; t)$ via the Ito formula.
\begin{prop}
Let $m_N(z; t)$ denote the Stieltjes transform of $X(t)$. Moreover, suppose $\eta = \on{Im}(z) > 0$. Then, for any times $0 \leq t_0 \leq t_1$, we have
\begin{align}
m_N(z; t_1) - m_N(z; t_0) \ &= \ \sum_{\lambda_\alpha \not\equiv 0} \ \frac{1}{\sqrt{N}} \int_{t_0}^{t_1} \ \frac{1}{(\lambda_\alpha - z)^2} \ \dd B_\alpha(t) \nonumber \\
&\quad \ + \ \left( \frac{1}{N} \sum_{\lambda_\alpha \not\equiv 0} \ \frac{1}{(\lambda_\alpha - z)^3} \ + \ \frac{1}{2N} \sum_{\lambda_\alpha \not\equiv \pm \lambda_\beta} \ \frac{\lambda_\alpha + \lambda_\beta - 2z}{(\lambda_\alpha - z)^2(\lambda_\beta - z)^2} \ + \ \frac{M-N}{\lambda_\alpha} \right) \ \dd t.\label{eq:SDESTdeformedmatrices}
\end{align}
\end{prop}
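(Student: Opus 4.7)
The plan is to apply Ito's formula to each term $\frac{1}{\lambda_\alpha(t) - z}$ appearing in the partial Stieltjes transform $m_N(z;t) = \frac{1}{2N}\sum_{\alpha}\frac{1}{\lambda_\alpha - z}$ (summed over nontrivial eigenvalues paired with their negatives), and then to consolidate the resulting martingale and drift contributions using the eigenvalue SDEs from Theorem \ref{theorem:DBMDBM}. Because each $\lambda_\alpha$ is a continuous semimartingale, one applies the scalar Ito formula with $f(x) = (x-z)^{-1}$, for which $f'(x) = -(x-z)^{-2}$ and $f''(x) = 2(x-z)^{-3}$. Proposition \ref{prop:intersectionprobability} ensures that the relevant singularities are avoided on a set of full measure, so the stochastic calculus is justified.

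First I would extract the martingale contribution. Substituting $d\lambda_\alpha = \frac{1}{\sqrt{N}}dB_\alpha + (\textrm{drift})\,dt$ into $df(\lambda_\alpha)$ yields a stochastic integrand proportional to $-(\lambda_\alpha - z)^{-2}/\sqrt{N}$ against $dB_\alpha$. After pairing the $\pm\alpha$ indices using the constraint $B_\alpha = -B_{-\alpha}$ and $\lambda_{-\alpha} = -\lambda_\alpha$, the per-pair stochastic integrals combine into an expression that is naturally indexed only by the nontrivial eigenvalues $\lambda_\alpha \not\equiv 0$, matching the martingale term in \eqref{eq:SDESTdeformedmatrices}. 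The Ito correction $\tfrac12 f''(\lambda_\alpha)\,d\langle \lambda_\alpha\rangle$ combined with the quadratic variation $d\langle\lambda_\alpha\rangle = \frac{1}{N}dt$ produces the first drift term $\frac{1}{N}\sum (\lambda_\alpha - z)^{-3}\,dt$.

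Next I would handle the repulsion drift. Multiplying the DBM drift $\frac{1}{2N}\sum_{\beta\ne\pm\alpha}(\lambda_\alpha - \lambda_\beta)^{-1}$ by $-(\lambda_\alpha - z)^{-2}$ and summing over $\alpha$ gives a double sum that I would symmetrize via the swap $\alpha\leftrightarrow\beta$ and collapse using the elementary identity
\begin{align*}
\frac{1}{\lambda_\alpha - \lambda_\beta}\!\left[\frac{1}{(\lambda_\alpha - z)^2} - \frac{1}{(\lambda_\beta - z)^2}\right] \;=\; -\frac{\lambda_\alpha + \lambda_\beta - 2z}{(\lambda_\alpha - z)^2(\lambda_\beta - z)^2}.
\end{align*}
This produces the manifestly symmetric drift term of \eqref{eq:SDESTdeformedmatrices}. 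In the rectangular regime $M>N$, the additional drift $\frac{M-N}{2N\lambda_\alpha}$ from Theorem \ref{theorem:DBMDBM}(2) contributes, after multiplication by $-(\lambda_\alpha - z)^{-2}$ and summation over $\alpha$, the remaining $\frac{M-N}{\lambda_\alpha}$ term; here Assumption \ref{assumption:repulsionfrom0DBM} guarantees the $1/\lambda_\alpha$ factors remain bounded.

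The main obstacle will be the careful bookkeeping around the trivial eigenvalues in $\zeta_t(X)$ and the enforcement of the pairing $B_\alpha = -B_{-\alpha}$: one must verify that eigenvalues in $\zeta_t(X)$ contribute neither to the martingale (since $d\lambda \equiv 0$ for them) nor spuriously to the Ito correction, and that after pairing the antipodal indices the stochastic integrals indeed reduce to a sum indexed by single representatives of each $\pm$-pair with the correct $1/\sqrt{N}$ prefactor. Once this combinatorial/notational bookkeeping is in place, the remainder is a direct application of Ito's formula together with the symmetrization identity above.
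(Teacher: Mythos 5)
Your approach — applying It\^{o}'s formula to $m_N$ viewed as a smooth function of the nontrivial eigenvalues, feeding in the DBM SDEs from Theorem~\ref{theorem:DBMDBM}, invoking Proposition~\ref{prop:intersectionprobability} to justify the computation, and symmetrizing the repulsion double sum — is exactly what the paper does, though the paper states the It\^{o} step and derivative identities more tersely and leaves the $\alpha\leftrightarrow\beta$ symmetrization implicit. Your explicit partial-fractions identity and the remarks on the $\pm\alpha$ pairing and trivial eigenvalues fill in bookkeeping the paper omits, but the underlying argument is identical.
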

\begin{proof}
Because the Brownian motions $B_{\alpha}(t)$ are jointly independent, they are also orthogonal, i.e.
\begin{align}
\d \langle \lambda_{\alpha}(t), \lambda_{\beta}(t) \rangle \ = \ \frac{\delta_{\alpha \beta}}{N} \d t.
\end{align}
From this, by the Ito formula, at least formally we have
\begin{align}
m_N(z; t_1) - m_N(z; t_0) \ &= \ \sum_{\lambda_{\alpha} \not\equiv 0} \ \int_{t_0}^{t_1} \ \partial_{\lambda_{\alpha}} m_N(z; t) \d \lambda_{\alpha} \ + \ \frac{1}{2N} \sum_{\lambda_{\alpha} \not\equiv 0} \ \partial_{\lambda_{\alpha}}^{(2)} m_N(z;t) \ \d t.
\end{align}
From here, it suffices to calculate the derivatives of $m_N$ with respect to the nontrivial eigenvalues $\lambda_{\alpha} \not\equiv 0$; because $m_N$ is nonsingular at $z = E + i \eta \in \C_+$, the following derivative identities hold rigorously as well:
\begin{align}
\partial_{\lambda_\alpha} \ m_N(z; t) \ &= \ \sum_{\lambda_\alpha} \ - \frac{1}{(\lambda_\alpha - z)^2}, \\
\partial_{\lambda_\alpha}^{(2)} \ &= \ \frac{1}{2} \sum_{\lambda_\alpha} \ \frac{1}{(\lambda_\alpha - z)^3}.
\end{align}
Lastly, we remark that the calculation with Ito's formula holds rigorously at $z = E + i \eta \in \C_+$, a point at which $m_N$ is smooth as a function of the real eigenvalues $\lambda_{\alpha}$.
\end{proof}
With the martingale bound in Proposition \ref{prop:subexponentialIto}, we now derive the following short-time estimate for the SDE dynamics of the Stieltjes transform $m_N(z; t)$.
\begin{lemma} \label{lemma:SDESTdeformedmatrices}
Suppose $0 \leq t_0 \leq t_1$ are two times. Then we have uniformly over $z \in \mathbf{D}_{L,q}$ the following bound with $(\xi, \nu)$-high probability:
\begin{align}
\sup_{t \in [t_0, t_1]} \ \left| m_N(z; t) - m_N(z; t_0) \right| \ = \ O \left( (t_1 - t_0) \left( \frac{1}{\eta^3} + \frac{N^{B_V}}{\eta^4} + N^{5B_V} + 1 + \frac{1}{\eta^4} \right) \right).
\end{align}
\end{lemma}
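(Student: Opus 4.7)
The plan is to apply the Ito representation \eqref{eq:SDESTdeformedmatrices} and estimate the martingale and drift contributions separately. Writing the increment over $[t_0, t]$ as
\begin{align*}
m_N(z; t) - m_N(z; t_0) \ = \ \mathscr{M}(t_0, t) \ + \ \mathscr{D}(t_0, t),
\end{align*}
with $\mathscr{M}$ the Brownian integral and $\mathscr{D}$ the time integral on the RHS of \eqref{eq:SDESTdeformedmatrices}, I will bound these independently and then take the supremum in $t$.

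For the drift, I would bound each of the three constituent summations pointwise in $s \in [t_0, t_1]$. On the $(\xi, \nu)$-high-probability event where $|\lambda_\alpha(s)| \leq 2 N^{B_V}$ for all $\alpha$ (obtained from the a priori spectral bound $\|V\|_{\infty} \leq N^{B_V}$ together with a Gaussian spectral-norm estimate on $\sqrt{s} W_\ell$), combined with the trivial bound $|\lambda_\alpha - z| \geq \eta$, the first sum is controlled by
\begin{align*}
\left| \frac{1}{N} \sum_{\alpha} \ \frac{1}{(\lambda_\alpha(s) - z)^3} \right| \ \leq \ C \eta^{-3}.
\end{align*}
For the cross sum I use
\begin{align*}
\left| \frac{1}{2N} \sum_{\alpha, \beta} \ \frac{\lambda_\alpha + \lambda_\beta - 2z}{(\lambda_\alpha - z)^2 (\lambda_\beta - z)^2} \right| \ \leq \ \frac{C (N^{B_V} + |z|)}{N} \left( \sum_\alpha \ \frac{1}{|\lambda_\alpha(s) - z|^2} \right)^2,
\end{align*}
combined with the crude bound $\sum_\alpha |\lambda_\alpha - z|^{-2} \leq N / \eta^2$ and the domain constraint $|z| \leq N^{5 B_V}$ on $\mathbf{D}_{L,q}$. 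This extracts the contributions $N^{B_V}/\eta^4$ and $N^{5 B_V}/\eta^4$, the latter absorbed into $N^{5 B_V}$ on the part of $\mathbf{D}_{L,q}$ where $\eta \geq 10$. The remaining term of the form $(M-N)/(N \lambda_\alpha)$ is of order $1$ by the uniform lower bound in Assumption \ref{assumption:repulsionfrom0DBM}. Integrating these pointwise-in-time estimates over $[t_0, t]$ and taking $\sup_{t \in [t_0, t_1]}$ yields precisely the $(t_1 - t_0)$ prefactor of the stated bound, for the drift contribution.

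For the martingale, I would apply Proposition \ref{prop:subexponentialIto} to the Brownian integral after localizing onto the same high-probability event. By orthogonality of the family $\{B_\alpha\}_\alpha$, the quadratic variation satisfies
\begin{align*}
\d \langle \mathscr{M}, \mathscr{M} \rangle(s) \ = \ \frac{1}{N^2} \sum_\alpha \ \frac{1}{|\lambda_\alpha(s) - z|^4} \ \d s \ \leq \ \frac{1}{N \eta^4} \, \d s,
\end{align*}
using $\sum_\alpha |\lambda_\alpha - z|^{-4} \leq \eta^{-2} \sum_\alpha |\lambda_\alpha - z|^{-2} \leq N / \eta^4$. Hence $\sigma_{t_1}^2 \leq (t_1 - t_0) / (N \eta^4)$, and Proposition \ref{prop:subexponentialIto} yields
\begin{align*}
\sup_{t \in [t_0, t_1]} \ \left| \mathscr{M}(t_0, t) \right| \ \leq \ C \sqrt{\frac{t_1 - t_0}{N \eta^4}}
\end{align*}
with $(\xi, \nu)$-high probability. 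This is comfortably absorbed into the $(t_1 - t_0)/\eta^4$ contribution in the statement whenever $t_1 - t_0 \geq \eta^2 / \sqrt{N}$, a regime covering the applications of this lemma.

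The main obstacle I anticipate is the rigorous application of Proposition \ref{prop:subexponentialIto}, which is stated for bounded integrands, despite the integrand $(\lambda_\alpha(s) - z)^{-2}$ being random and controlled only on a high-probability event. I would resolve this via a standard stopping-time trick: let $\tau$ be the first time either the spectral-norm estimate $\|X(s)\|_{\infty} \leq 2 N^{B_V}$ or the repulsion bound of Assumption \ref{assumption:repulsionfrom0DBM} fails; apply Proposition \ref{prop:subexponentialIto} to the stopped integrand, which is then deterministically bounded; and conclude by absorbing $\mathbb{P}(\tau < t_1) \leq e^{-\nu \log^{\xi} N}$ into the error event. A union bound over the polynomially many martingale components and over the drift event completes the proof.
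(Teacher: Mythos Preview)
Your approach is essentially the paper's: split the Ito representation \eqref{eq:SDESTdeformedmatrices} into drift and martingale, bound the drift pointwise via $|\lambda_\alpha - z| \geq \eta$, the spectral-norm control, and Assumption~\ref{assumption:repulsionfrom0DBM}, and bound the martingale via Proposition~\ref{prop:subexponentialIto}. Your case split $\mathbf{D}_1$ versus $\mathbf{D}_2$ for the cross-sum drift term also mirrors the paper's.

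The one substantive discrepancy is the martingale scaling. You correctly obtain the standard-deviation bound $C\sqrt{(t_1-t_0)/(N\eta^4)}$ from Proposition~\ref{prop:subexponentialIto}, which is \emph{not} dominated by $(t_1-t_0)/\eta^4$ without your extra hypothesis on $t_1-t_0$. The paper instead bounds the supremum by the \emph{variance} $\E(\cdot)^2$ and then invokes the Ito isometry to get the linear-in-$(t_1-t_0)$ bound $O((t_1-t_0)/\eta^4)$ directly; this step in the paper is not justified as written (Proposition~\ref{prop:subexponentialIto} gives $\sigma_T$, not $\sigma_T^2$). So your version is the more honest one, and your added caveat is genuinely needed to recover the stated form. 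For the only application of the lemma---the stochastic-continuity argument with $|t_1-t_0| \leq N^{-5B_V-4}$---either bound is amply sufficient, so this does not affect the downstream conclusions. Your stopping-time remark for handling the randomness of the integrand is a detail the paper simply omits.
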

\begin{proof}
We first consider the drift term in the SDE \eqref{eq:SDESTdeformedmatrices}. Suppose $z \in \mathbf{D}_1$, so that the drift term is bounded as follows:
\begin{align}
\left| \frac{1}{N} \sum_{\lambda_\alpha \not\equiv 0} \ \frac{1}{(\lambda_\alpha - z)^3} \ + \ \frac{1}{2N} \sum_{\lambda_\alpha \not\equiv \pm \lambda_\beta} \ \frac{\lambda_\alpha + \lambda_\beta - 2z}{(\lambda_\alpha - z)^2(\lambda_\beta - z)^2} \ + \ \frac{M-N}{\lambda_\alpha} \right| \ = \ O \left( \frac{1}{\eta^3} + \frac{N^{B_V}}{\eta^{4}} + 1 \right).
\end{align}
For $z \in \mathbf{D}_2$, we note the first two drift terms are decreasing as $\eta$ increases, and the third drift term is $z$-independent. Thus, we may assume $\eta = 10$, in which case we have the bound
\begin{align}
\left| \frac{1}{N} \sum_{\lambda_\alpha \not\equiv 0} \ \frac{1}{(\lambda_\alpha - z)^3} \right| \ + \ \left| \frac{1}{2N} \sum_{\lambda_\alpha \not\equiv \pm \lambda_\beta} \ \frac{\lambda_\alpha + \lambda_\beta - 2z}{(\lambda_\alpha - z)^2(\lambda_\beta - z)^2} \right| \ + \ \left| \frac{M-N}{\lambda_\alpha} \right| \ \leq \ O \left(1 + N^{5B_V} \right).
\end{align}
Here, we use the a priori repulsion from the origin in bounding the singular eigenvalue term $\frac{M-N}{\lambda_{\alpha}}$. This provides the appropriate bound for the drift terms.

It remains to bound the diffusion terms with high probability. Because the diffusion terms are bounded, adapted and measurable, by Proposition \ref{prop:concentration} we have the following estimate uniformly over $z \in \mathbf{D}_{L,q}$ with $(\xi, \nu)$-high probability:
\begin{align}
\sup_{t \in [t_0, t_1]} \ \left| \sum_{\lambda_\alpha \not\equiv 0} \ \frac{1}{\sqrt{N}} \int_{t_0}^{t} \ \frac{1}{(\lambda_\alpha - z)^2} \ \d B_\alpha(t) \right| \ \leq \ C \E \left( \ \int_{t_0}^{t_1} \ \frac{1}{(\lambda_\alpha - z)^2} \ \d B_\alpha(t) \right)^2.
\end{align}
We compute the RHS with Ito's $L^2$-isometry to obtain the following bound for the martingale term:
\begin{align}
\sup_{t \in [t_0, t_1]} \ \left| \sum_{\lambda_\alpha \not\equiv 0} \ \frac{1}{\sqrt{N}} \int_{t_0}^{t} \ \frac{1}{(\lambda_\alpha - z)^2} \ \d B_\alpha(t) \right| \ = \ O \left( \frac{t_1 - t_0}{\eta^4} \right). 
\end{align}
\end{proof}
We now turn to deriving similar estimates for the Stieltjes transform of the free convolution measure. Because this measure is deterministic, we may use PDE techniques directly.
\begin{prop} \label{prop:STFCshorttimesdeformedmatrices}
Fix $\delta > 0$ and suppose $0 < t_1 < t_2$ are times satisfying 
\begin{align}
t_1 + t_2 \ \leq \ C N^{-\delta}
\end{align}
for some fixed constant $C > 0$. Suppose $V$ is $(g, G)$-regular at $E_0$, and let $0 < q < 1$. Then, uniformly over $z \in \mathbf{D}_{L,q}$, we have
\begin{align}
\left| m_{\on{fc}, t_2}(z) - m_{\on{fc}, t_1}(z) \right| \ = \ O \left( (t_2 - t_1) \left( \frac{1}{\eta} + \frac{1}{\eta^2}\right) \right).
\end{align}
\end{prop}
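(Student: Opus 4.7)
The plan is to derive a Burgers-type evolution equation for $m_{\on{fc},t}(z)$ directly from its fixed-point characterization and then integrate it in $t$. Set $m := m_{\on{fc},t}(z)$ and
\begin{align}
S(z,t) \ := \ \frac{1}{2N} \sum_{i=1}^N \ \left( \frac{1}{(V_i - z - tm)^2} \ + \ \frac{1}{(-V_i - z - tm)^2} \right);
\end{align}
implicit differentiation of the fixed-point equation in $t$ gives $\partial_t m = mS + tS \partial_t m$, hence $\partial_t m = mS/(1 - tS)$, while implicit differentiation in $z$ gives $\partial_z m = S + tS \partial_z m$, hence $\partial_z m = S/(1 - tS)$. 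Dividing these two identities yields the free convolution Burgers equation
\begin{align}
\partial_t m_{\on{fc},t}(z) \ = \ m_{\on{fc},t}(z) \, \partial_z m_{\on{fc},t}(z).
\end{align}

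Next, I would bound the two factors on the right-hand side separately on each piece of $\mathbf{D}_{L,q} = \mathbf{D}_1 \cup \mathbf{D}_2$. From the integral representation of the Stieltjes transform against its compactly supported density $\varrho_{\on{fc},s}$, one has the universal pointwise inequality $|\partial_z m_{\on{fc},s}(z)| \leq \on{Im}(m_{\on{fc},s}(z))/\eta$. On $\mathbf{D}_1$, the $(g,G)$-regularity of $V$ combined with the stability of the free convolution fixed-point equation (in the spirit of (7.10) in Lemma 7.2 and (7.26) in Lemma 7.3 of \cite{LY}) yields the a priori estimates $|m_{\on{fc},s}(z)| = O(1)$ and $\on{Im}(m_{\on{fc},s}(z)) = O(1)$ uniformly over $s \in [t_1, t_2]$; the smallness assumption $t_1 + t_2 \leq CN^{-\delta}$ keeps $sm_{\on{fc},s}$ a controlled perturbation of $z$, so that the bulk regularity of $m_V$ propagates to $m_{\on{fc},s}$. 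Hence $|m_{\on{fc},s} \partial_z m_{\on{fc},s}| = O(1/\eta)$ on $\mathbf{D}_1$. On $\mathbf{D}_2$, where $\eta \geq 10$, the elementary Stieltjes bound gives $|m_{\on{fc},s}|, \on{Im}(m_{\on{fc},s}) \leq 1/\eta$, so $|m_{\on{fc},s} \partial_z m_{\on{fc},s}| = O(1/\eta^3) = O(1/\eta^2)$.

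Integrating the Burgers identity over $s \in [t_1, t_2]$ and using the triangle inequality then yields
\begin{align}
\left| m_{\on{fc},t_2}(z) - m_{\on{fc},t_1}(z) \right| \ \leq \ \int_{t_1}^{t_2} \left| m_{\on{fc},s}(z) \partial_z m_{\on{fc},s}(z) \right| \ \d s \ \leq \ C(t_2-t_1) \left( \frac{1}{\eta} + \frac{1}{\eta^2} \right),
\end{align}
uniformly in $z \in \mathbf{D}_{L,q}$, as claimed. The main obstacle is the uniform-in-$s$ control of $|m_{\on{fc},s}|$ and $\on{Im}(m_{\on{fc},s})$ on $\mathbf{D}_1$, i.e.\ propagating the $(g,G)$-regularity of $V$ through the free convolution; this is precisely where the stability/perturbation estimates from \cite{LY} cited above enter, and it exploits the smallness of the time window. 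Once this propagation is in hand, the Burgers identity reduces the time-difference estimate to a one-line integration.
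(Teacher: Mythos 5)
Your proposal follows the same high-level strategy as the paper (derive an evolution equation for $m_{\on{fc},t}$ and integrate it in time), but your execution is more careful and in one place actually more accurate. The paper's sketch simply records the PDE $\partial_t m_{\on{fc},t} = \tfrac12 \partial_z\left[m_{\on{fc},t}(m_{\on{fc},t}+z)\right]$ and defers to "standard arguments" in \cite{LY}; that PDE is the one for the Ornstein--Uhlenbeck (rescaled) flow, whereas the fixed-point equation this paper actually defines $m_{\on{fc},t}$ by corresponds to the additive Brownian perturbation, whose correct evolution equation is precisely the Burgers form $\partial_t m = m\,\partial_z m$ that you obtain by implicit differentiation. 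Your derivation of $\partial_t m = mS/(1-tS)$ and $\partial_z m = S/(1-tS)$ and their quotient is clean and internally consistent with the paper's definition; it also avoids the extra $\tfrac{z}{2}\partial_z m$ and $\tfrac{m}{2}$ terms that, if taken at face value from the paper's stated PDE, would not give the claimed $O(1/\eta + 1/\eta^2)$ uniformly over $\mathbf{D}_2$, where $|z|$ can be as large as $N^{5B_V}$. Your use of the pointwise inequality $|\partial_z m_{\on{fc},s}| \le \on{Im}(m_{\on{fc},s})/\eta$ and the split $\mathbf{D}_1\cup\mathbf{D}_2$ is exactly the kind of "standard argument" the paper is implicitly invoking. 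Two small points worth making explicit: first, on $\mathbf{D}_1$ the bound $|m_{\on{fc},s}| = O(1)$ requires control of the real part of the Stieltjes transform, not just its imaginary part; this holds because $E$ lies strictly inside the regularity window $\mathscr{I}_{E_0,qG}$ with $q<1$, where the density is bounded, but the generic Stieltjes estimate in Lemma~7.1 of \cite{LY} only gives a $\log N$-losing bound, so it is worth flagging that the bulk boundedness of $\on{Re}\, m_{\on{fc},s}$ is what you are using. Second, the propagation of the $(g,G)$-regularity of $m_V$ to $m_{\on{fc},s}$ for all $s \in [t_1,t_2]$ via the stability estimates (your appeal to (7.10) and (7.26) of \cite{LY}) is indeed where the smallness hypothesis $t_1+t_2 \le CN^{-\delta}$ enters, and you correctly identify this as the crux. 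With those two clarifications your argument is a complete and correct filling-in of the paper's one-line proof.
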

\begin{proof}
We briefly note here that the Stieltjes transform solves the following PDE:
\begin{align}
\partial_t \ m_{\on{fc},t}(z) \ = \ \frac12 \partial_z \left[ m_{\on{fc},t}(z) \left(m_{\on{fc},t}(z) + z \right) \right].
\end{align}
The desired estimate follows now from standard arguments, e.g. see Section 7 in \cite{LY}.
\end{proof}
With Lemma \ref{lemma:SDESTdeformedmatrices} and Proposition \ref{prop:STFCshorttimesdeformedmatrices}, we may now deduce a local law uniformly in time. We first partition the time interval $\mathscr{T}_{\omega}$ into a set of time intervals with small gaps as follows:
\begin{align}
\mathscr{T}_{\omega} \ = \ \mathscr{T}_{\omega, 1} \cup \ldots \cup \mathscr{T}_{\omega, K}: \ \sup_{1 \leq j \leq K} \ |\mathscr{T}_{\omega, j}| \ \leq \ N^{-5B_V - 4},  \quad \sup \mathscr{T}_{\omega, i} \ \leq \ \inf \mathscr{T}_{\omega, i+1}.
\end{align}
We note that, if $B_V$ is fixed, that $K = O(1)$. With $(\xi, \nu)$-high probability, by the pointwise (in time) local law in Theorem \ref{theorem:stronglocallawstronglocallawdeformedmatrices}, the strong local law holds for the $N^{O(1)}$ set of times $\{ \inf \mathscr{T}_{\omega,j} \}$. For notational convenience, we establish the following:
\begin{align}
t_j \ := \ \inf \mathscr{T}_{\omega, j}.
\end{align}
On the other hand, by Lemma \ref{lemma:SDESTdeformedmatrices} and Proposition \ref{prop:STFCshorttimesdeformedmatrices}, we have the following estimates for any fixed $j \in [[1, K]]$:
\begin{align}
\sup_{t \in \mathscr{T}_{\omega, j}} \ \sup_{z \in \mathbf{D}_{L,q}} \ |m_N(z; t) - m_{\on{fc},t}(z)| \ &\leq \ \sup_{t \in \mathscr{T}_{\omega, j}} \ \sup_{z \in \mathscr{D}_{L,q}} \ \left| m_N(z; t) - m_N(z; t_j) \right| \nonumber \\
&\quad \ + \ \sup_{t \in \mathscr{T}_{\omega, j}} \ \sup_{z \in \mathscr{D}_{L,q}} \ \left| m_N(z; t_j) - m_{\on{fc},t_j}(z) \right| \nonumber \\
&\quad \ + \ \sup_{t \in \mathscr{T}_{\omega, j}} \ \sup_{z \in \mathscr{D}_{L,q}} \ \left| m_{\on{fc}, t_j}(z) - m_{\on{fc}, t}(z) \right| \\
&\leq \ O \left( N^{-4} \left(1 + \frac{1}{\eta} + \frac{1}{\eta^2} + \frac{1}{\eta^3} + \frac{1}{\eta^4} \right) + \frac{N^{\e}}{N \eta} \right).
\end{align}
This completes the stochastic continuity argument and thus extends the strong local law to all times in $\mathscr{T}_{\omega}$ simultaneously with high probability.
%
%
%



\begin{thebibliography}{1}

\bibitem{A} Adlam, Ben. \emph{The Local Marchenko-Pastur Law for Sparse Covariance Matrices}. Senior Thesis, Harvard University Department of Mathematics (2013).

\bibitem{AGZ} Anderson, Greg W., Alice Guionnet, and Ofer Zeitouni. \emph{An Introduction to Random Matrices. Cambridge}. Cambridge UP, 2010. Print.

\bibitem{BHKY} Bauerschmidt, Roland and Jiaoyang Huang, Antti Knowles, and H.T. Yau. "Bulk eigenvalue statistics for random regular graphs." arXiv:1505.06700 [math.PR]. Submitted 25 May, 2015.

\bibitem{Bi} Biane, Philippe. "On the free convolution with a semi-circular distribution." \emph{Indiana University Mathematics Journal}. 46(3):705?718, 1997.

\bibitem{BY} Bourgade, Paul and H.T. Yau. "The eigenvector moment flow and local quantum unique ergodicity." \emph{Communications in Mathematical Physics}, published online 30 April, 2016, DOI 10.1007/s00220-016-2627-6.

\bibitem{D} Dyson, Freeman. "A Brownian-Motion Model for the Eigenvalues of a Random Matrix." \emph{Journal of Mathematical Physics}, \textbf{3}, 1191 (1962)

\bibitem{D2} Dyson, Freeman. "Correlations between eigenvalues of a random matrix". \emph{Communications in Mathematical Physics}, 19:235250, 1970

\bibitem{ESY} Erdos, Laszlo and Benjamin Schlein and H.-T. Yau. ``Universality of Random Matrices and Local Relaxation Flow". \emph{Inventiones Mathematicae}. (2011). 185: 75. DOI:10.1007/s00222-010-0302-7

\bibitem{ESYY} Erdos, Laszlo and Benjamin Schlein, H.T. Yau, and Jun Yin. ``The local relaxation flow approach to universality of the local statistics for random matrices". \emph{Annals of the Institute  of H. Poincare in Probability and Statistics}. Volume 48, Number 1 (2012), 1-46.

\bibitem{EYY} Erdos, Laszlo, and H.-T. Yau and J. Yin. ``Bulk Universality for generalized Wigner matrices". \emph{Probability and Related Fields}, 154(1-2):341-407, 2012.

\bibitem{GM} Gaudin, Michel and Madan Lal Mehta. "On the density of eigenvalues of a random matrix." \emph{Nuclear Phys. B}, 18:420427, 1960

\bibitem{HLY} Huang, Jiaoyang and Benjamin Landon and H.T. Yau. ``Bulk Universality of Sparse Random Matrices". \emph{Journal of Mathematical Physics}. (2015).

\bibitem{LSY} Landon, Benjamin, Phillippe Sosoe, and H.T. Yau. "Fixed energy universality of Dyson Brownian motion". arXiv:1609.09011 [math.PR]. September 28, 2016. 

\bibitem{LY} Landon, Benjamin and H.T. Yau. ``Convergence of local statistics of Dyson Brownian motion". \emph{Communications in Mathematical Physics}, to appear (2015).

\bibitem{LS} Lee, Ji Oon and K. Schnelli. ``Local deformed semicircle law and complete delocalization for Wigner matrices with random potential". \emph{Journal of Mathematical Physics}. 54(10):103504, 2013.

\bibitem{M2} Madan, Mehta Lal. \emph{Random matrices}. Academic Press, Inc., Boston, MA, second edition, 1991.

\bibitem{NP1} Pillai, Natesh S, Jun Yin "Edge universality of correlation matrices." \emph{Annals of Statistics}. 40 (2012), no. 3, 1737--1763. DOI" 10.1214/12-AOS1022.

\bibitem{NP2} Pillai, Natesh S., Jun Yin. "Universality of covariance matrices." \emph{Annals of Applied Probability}, 24 (2014), no. 3, 935--1001. DOI: 10.1214/13-AAP939. 

\bibitem{Y1} Yang, Kevin. "Local Marchenko-Pastur law for random bipartite graphs". arXiv:1704.08672 [math.PR]. Submitted April 27, 2017.

\bibitem{Y2} Yang, Kevin. "Bulk eigenvalue correlation statistics of random biregular bipartite graphs". ??
\end{thebibliography}
\end{document}